\newcommand{\eps}{\varepsilon}
\numberwithin{equation}{section}
\renewcommand{\l}{\left}
\renewcommand{\r}{\right}
\newcommand{\YC}[1]{\textcolor{blue}{#1}}
\numberwithin{equation}{section}
	\newtheorem{theorem}{Theorem}[section]
        \newtheorem{question}[theorem]{Question}
	\newtheorem{coro}[theorem]{Corollary}
	\newtheorem{claim}[theorem]{Claim}
	\newtheorem{proposition}[theorem]{Proposition}
	\newtheorem{lemma}[theorem]{Lemma}
        \theoremstyle{definition}
	\newtheorem{defn}[theorem]{Definition}
    \newcommand{\card}[1]{|#1|}
\newenvironment{proofclaim}[1][Proof of claim]{\begin{proof}[#1]}{\end{proof}}
\setlist{nolistsep}
\title{Robustness for expander graphs}
\author{Yaobin Chen\thanks{Shanghai Center for Mathematical Sciences,~Fudan University,~Shanghai,~200438,~China.~{\tt ybchen21@m.fudan.edu.cn}.
Supported by National Natural Science Foundation of China grant 123B2012.}
\and Yu Chen\thanks{School of Mathematics and Statistics,~Beijing Institute of Technology,~Beijing,~100081,~China.~{\tt yu.chen2023@bit.edu.cn} (Y. Chen), {\tt han.jie@bit.edu.cn} (J. Han), {\tt jingwen.zhao@bit.edu.cn} (J. Zhao). 
J. Han is supported by the National Natural Science Foundation of China (12371341).}
\and Jie Han\footnotemark[2] 
\and Jingwen Zhao\footnotemark[2]}
\date{\today}
\begin{document}

\maketitle
\begin{abstract}
We study robust versions of properties of $(n,d,\lambda)$-graphs, namely, the property of a random sparsification of an $(n,d,\lambda)$-graph, where each edge is retained with probability $p$ independently. 
We prove such results for the containment problem of perfect matchings, Hamiltonian cycles, and triangle factors. 
These results address a series of problems posed by Frieze and Krivelevich.

First we prove that given $\gamma>0$, for sufficient large $n$, any $(n,d,\lambda)$-graph $G$ with $\lambda=o(d)$, $d=\Omega(\log n)$ and $p\ge\frac{(1+\gamma)\log n}{d}$, $G\cap G(n,p)$ contains a Hamiltonian cycle (and thus a perfect matching if $n$ is even) with high probability. This result is asymptotically optimal. 

Moreover, we show that for sufficient large $n$, any $(n,d,\lambda)$-graph $G$ with $\lambda=o(\frac{d^2}{n})$, $d=\Omega(n^{\frac{5}{6}}\log^{\frac{1}{2}}n)$ and $p\gg d^{-1}n^{\frac{1}{3}}\log^{\frac{1}{3}} n$, $G\cap G(n,p)$ contains a triangle factor with high probability. Here, the restrictions on $p$ and $\lambda$ are asymptotically optimal.

%Our proof is based on the robustness of the expander property and constructs a spread measure on pseudorandom graphs. 
Our proof for the triangle factor problem uses the iterative absorption approach to build a spread measure on the triangle factors, and we also prove and use a coupling result for triangles in the random subgraph of an expander $G$ and the hyperedges in the random subgraph of the triangle-hypergraph of $G$.
\end{abstract}

\section{Introduction}
A classical problem in combinatorics is under which conditions a given graph contains a specific spanning structure. For example, Dirac~\cite{dirac1952some} proved that if an $n$-vertex graph $G$ has a minimum vertex degree at least $n/2$, then $G$ contains a Hamiltonian cycle. Over the past decades, the study of minimum degree conditions for other spanning substructures has grown into an essential branch of combinatorics.

A related field in combinatorics concerns the study of spanning structures in random graphs and pseudorandom graphs. For example, Pos{\'a} \cite{PosaHam} and Korshunov \cite{Korshunovham} independently showed that $G(n,p)$ contains a Hamiltonian cycle with high probability if $p\gg \log n /n$. Prominent examples~\cite{erdHos1968random,alon1993threshold,alon1992spanning,bollobas1985matchings,johansson2008factors,montgomery2019spanning,riordan2000spanning,de1979long} include the thresholds of occurrence for the containment of a perfect matching, the containment of a clique factor, etc., in random graphs.

Following the fruitful study of random graphs, it is natural to explore families of deterministic
graphs that behave in a certain sense like random graphs; these are called pseudorandom
graphs. 
One special class of pseudorandom graphs that has been studied extensively is the class of
spectral expander graphs, also known as $(n, d, \lambda)$-\emph{graphs}. An $(n,d,\lambda)$-\emph{graph} is an \(n\)-vertex \(d\)-regular graph whose second largest absolute value eigenvalue is at most $\lambda$. The \emph{expander mixing lemma} shows that $\lambda$ governs the edge distribution of $G$. The smaller $\lambda$ means that the edge distribution of $G$ more resembles that of $G(n,d/n)$. It is convenient to quantify this in terms of $(q,\beta)$-\emph{bijumbledness}: a graph $G$ with $n$ vertices is $(q,\beta)$-bijumbled for some $q\in [0,1]$ and $\beta>0$ if for every $X,Y\subseteq V(G)$, we have 
\begin{align*}
    |e(X,Y)-q|X||Y||\le \beta\sqrt{|X||Y|},
\end{align*}
where $e(X,Y)$ denotes the number of pairs $(u,v)\in X\times Y$ with $uv\in E(G)$ and note that under this definition, the edges in $X\cap Y$ are counted twice.
%where $e(X,Y)$ denotes the number of edges in $G$ with one endpoints in $X$ and the other in $Y$. Where $X$ and $Y$ are not disjoint, every edge that lies in their intersection is counted twice. 
The expander mixing lemma shows that if $G$ is an $(n,d,\lambda)$-graph, then it is $(d/n,\lambda)$-bijumbled.

Recently, a breakthrough result of Dragani{\'c}, Montgomery, Correia, Pokrovskiy, and Sudakov~\cite{draganic2024hamiltonicity}, proved that an expander graph, and therefore an $(n,d,\lambda)$-graph with $\lambda\ll d$, contains a Hamiltonian cycle. For a graph $G$ and a vertex subset $X$, we denote the neighbor set of $X$ in $V(G)\backslash X$ by $N_{G}(X)$ (when $G$ is clear from the context, we use $N(X)$). We say that a graph $G=(V, E)$ is a $C$-\emph{expander} if for every vertex set $X$ with $1\le |X|\le \tfrac{n}{2C}$, $|N(X)|\ge C|X|$, and for any two disjoint sets with size at least $n/2C$, there exists an edge between them. 
\begin{theorem}[\cite{draganic2024hamiltonicity}]\label{expander graph Hamiltonian}
    For every sufficiently large \(C>0\). Let $G$ be a $C$-expander graph. Then $G$ contains a Hamiltonian cycle. In particular, there exists a constant $\varepsilon>0$ such that, if $G$ is an $(n,d,\lambda)$-graph with $\lambda\le \varepsilon d$, then $G$ contains a Hamiltonian cycle. 
\end{theorem}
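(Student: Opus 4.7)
The plan is to apply a Pósa rotation-extension argument adapted to the abstract $C$-expander setting. Let $P = v_1 v_2 \cdots v_\ell$ be a longest path in $G$, so no extension of $P$ is possible; the argument will iteratively produce either a longer path (a contradiction with maximality) or a Hamilton cycle. Fix the endpoint $v_1$, let $\mathcal{F}$ denote the family of longest paths obtainable from $P$ by iterated Pósa rotations at the other end, and let $E \subseteq V(P)$ be the set of possible right endpoints of paths in $\mathcal{F}$. The classical Pósa analysis shows that every vertex in $N_G(E) \setminus E$ either lies outside $V(P)$ (in which case extension yields a longer path, a contradiction) or arises as the $P_t$-predecessor of some element of $E$ along a realizing $P_t \in \mathcal{F}$. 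This gives the \emph{boosting inequality}
\[
|N_G(E) \setminus E| \le 2|E| + O(1).
\]

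Combining the boosting inequality with the first clause of the $C$-expansion property forces $|E| > n/(2C)$, for otherwise $|N_G(E)| \ge C|E|$ would contradict the boosting inequality when $C$ is sufficiently large. A symmetric argument, starting from any $t \in E$ with its realizing path $P_t$ and rotating the $v_1$-end while fixing $t$, produces for each such $t$ a set $E'_t \subseteq V(P)$ of possible left endpoints with $|E'_t| > n/(2C)$. The extension-impossibility argument also forces $N_G(E), N_G(E'_t) \subseteq V(P)$. The final step is to locate an edge closing a cycle: some pair $(s, t)$ with $t \in E$, $s \in E'_t$, and $st \in E(G)$. This is where the second clause of the $C$-expander property enters: one uses the fact that the two linear-sized endpoint sets sitting inside $V(P)$ cannot avoid each other in $G$, which rules out the possibility that no pair of rotation-reachable endpoints forms a $G$-edge. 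The resulting cycle $C^\star$ has length $\ell$; if $\ell < n$, one further application of the expander property to the smaller of $V(C^\star)$ and its complement yields an edge from some $w \notin V(C^\star)$ into $V(C^\star)$, which opens $C^\star$ into a path of length $\ell + 1$ and contradicts the maximality of $P$.

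The main technical obstacle is executing this plan with $C$ a sufficiently large absolute constant, rather than $C = \omega(1)$ as would suffice for earlier, more wasteful incarnations of Pósa rotations: one must track the rotation combinatorics tightly enough that the constant $2$ in the boosting inequality is not inflated by unwanted interactions between rotations performed at different stages. This is precisely where the paper~\cite{draganic2024hamiltonicity} innovates, introducing a carefully chosen \emph{friendly path system} that governs simultaneous rotation realizability and makes the above expansion argument go through for a concrete absolute constant $C$. The ``in particular'' statement then follows from the expander mixing lemma: an $(n, d, \lambda)$-graph with $\lambda \le \varepsilon d$ is a $C$-expander with $C = \Omega(1/\varepsilon)$, so choosing $\varepsilon$ small enough makes $C$ sufficiently large.
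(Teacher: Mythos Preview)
This theorem is not proved in the paper. It is quoted from~\cite{draganic2024hamiltonicity} and invoked as a black box in the proof of Theorem~\ref{thm:main_thm_bijumbled}; the authors explicitly write that ``the proof of Theorem~\ref{expander graph Hamiltonian} from~\cite{draganic2024hamiltonicity} is highly non-trivial and technical,'' and for exactly this reason they supply a separate self-contained argument (Lemma~\ref{lem:tutte–expander_lemma}) only for the perfect-matching consequence. There is therefore no proof in the present paper against which to compare your proposal; both you and the authors ultimately defer to~\cite{draganic2024hamiltonicity}.

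As a side remark on the sketch itself: the rotation--extension framework and the derivation of the ``in particular'' clause via the expander mixing lemma are correct in outline, and you are right that making the argument go through for an absolute constant $C$ is the crux. But your sketch does not actually bridge that gap. The cycle-closing step, in particular, is not a direct application of the second expander clause: one needs $t\in E$ and $s\in E'_t$ with $st\in E(G)$, yet $E'_t$ varies with $t$, and the natural averaging only produces candidate sets of size $\Theta(n/C^2)$, below the $n/(2C)$ threshold required to invoke the edge-between-large-sets property. Difficulties of this kind are precisely what kept the problem open prior to~\cite{draganic2024hamiltonicity}, and their resolution there goes well beyond tightened P\'osa bookkeeping.
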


%Exploring optimal conditions that force $(n, d, \lambda)$-graphs to contain certain other spanning subgraphs is quite attractive, for example, the perfect matchings, triangle factors, and so on. 

It is natural to ask whether the random model and pseudorandom model can be combined. One interpretation of probabilistic threshold, initially suggested by Krivelevich, Lee, and Sudakov~\cite{KLS}, is as a measure of \emph{robustness}. 
They show that for an $n$-vertex graph $G$ with $\delta(G)\ge n/2$ and $p=\Omega(\log n/n)$, then the random sparsification $G_p$, obtained by keeping each edge of $G$ independently with probability $p$, contains a Hamiltonian cycle with high probability. 
Under dense graph or hypergraph settings, the robustness problems for perfect matchings, spanning trees, and Hamiltonian cycles have been widely studied in the past decades \cite{joos2023robust,pham2022toolkit,allen2024robust,han2025transversal,bastide2024random,han2025rainbow,chen2024thresholds}. 
Instead of dense graphs, this paper focuses on the robustness of sparse expander graphs. For an introduction to the latter, we direct the reader to the excellent survey by Krivelevich and Sudakov~\cite{Krivelevich-Sudakov2006}.

In 2002, Frieze and Krivelevich~\cite{Friezerandomhamsparse} initiated the study of robustness in sparse graphs with Hamiltonicity. They obtained the following result.
\begin{theorem}[\cite{Friezerandomhamsparse}]
    Let $G$ be an $(n,d,\lambda)$-graph with $\lambda=o(\frac{d^{5/2}}{n^{3/2}\log^{3/2} n})$. Then for any function $\omega(n)$ tending to infinity arbitrarily slowly:
    \begin{enumerate}[label=$(\roman*)$]
        \item if $p(n)=\frac{1}{d}\l(\log n +\log \log n -\omega(n)\r)$, then $G_p$ contains no Hamiltonian cycles with high probability.
        \item if $p(n)=\frac{1}{d}\l(\log n +\log \log n +\omega(n)\r)$, then $G_p$ contains a Hamiltonian cycle with high probability.
    \end{enumerate}
\end{theorem}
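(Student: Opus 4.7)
The plan is to exploit the classical phenomenon that the sharp threshold for Hamiltonicity in sparse random-like graphs coincides with the minimum-degree-$2$ threshold: below it the obstruction is a vertex of degree at most $1$, and above it a combination of pseudorandom expansion and a boosting/rotation argument closes a Hamiltonian cycle.

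For (i), let $X$ count the vertices of $G$ of degree at most one in $G_p$. The $d$-regularity of $G$ gives
\[
\E[X] = n\bigl[(1-p)^d + dp(1-p)^{d-1}\bigr] = (1+o(1))\,e^{\omega(n)} \longrightarrow \infty
\]
when $pd=\log n+\log\log n-\omega(n)$. The events $\{\deg_{G_p}(u)\le 1\}$ and $\{\deg_{G_p}(v)\le 1\}$ are independent unless $u$ and $v$ share an edge or a common neighbor in $G$, and the bijumbledness of $G$ bounds the total codegree count $\sum_{u\ne v}|N_G(u)\cap N_G(v)|$ in terms of $\lambda$. Plugging this into the variance calculation and using the hypothesis on $\lambda$ yields $\mathrm{Var}(X)=o(\E[X]^2)$, whence $X>0$ w.h.p.

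For (ii), I would use two-round exposure, writing $p=p_1+(1-p_1)p_2$ with $p_1 d=\log n-K\log\log n$ for a suitably large constant $K$. In the first round, a Chernoff bound applied to $e_{G_{p_1}}(X,V(G)\setminus X)$ for each $X\subset V(G)$, followed by a union bound made tight using the expander mixing lemma for $G$, shows that $G_{p_1}$ is a $C$-expander on all subsets of size at least some constant $s_0$. A first-moment calculation analogous to (i) shows that the set $J$ of vertices with fewer than $3$ neighbors in $G_{p_1}$ has size $n^{o(1)}$ and is spread in $V(G)$ (pairwise $G$-distance at least $3$). In the second round the residual probability satisfies $p_2 d=\Theta(\log\log n+\omega(n))$, which is comfortably above the min-degree threshold; one uses the fresh edges of $G_{p_2}$ to supply each $v\in J$ with enough additional neighbors, and applies P\'osa's rotation-extension argument to $G_p$. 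The exponentially many boosters produced by rotation inside the expander, combined with the independence of the second-round edges, close the Hamiltonian cycle after a union bound over rotations.

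The principal technical obstacle is the expansion step: to apply Theorem~\ref{expander graph Hamiltonian} one needs $|N_{G_{p_1}}(X)|\ge C|X|$ for every set $X$ of size between $s_0$ and $n/(2C)$. Because sparsification effectively scales the boundary by $p$, the bijumbledness error $\lambda\sqrt{|X||V(G)\setminus X|}$ must be dominated by the main term $(d/n)|X||V(G)\setminus X|$ even after multiplying through by $p$ and surviving a union bound over all subsets $X$; tracing the argument at the smallest and largest values of $|X|$ shows that this is exactly what pins down the hypothesis $\lambda=o(d^{5/2}/(n^{3/2}\log^{3/2}n))$.
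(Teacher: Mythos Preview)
This theorem is not proved in the paper: it is quoted from Frieze and Krivelevich~\cite{Friezerandomhamsparse} as prior work, and the present paper supplies no argument for it. There is therefore no proof here to compare your proposal against.

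That said, your outline is essentially the classical Frieze--Krivelevich strategy and is broadly sound. Two small corrections. In part~(i), the indicators $\{\deg_{G_p}(u)\le 1\}$ and $\{\deg_{G_p}(v)\le 1\}$ depend only on the $G$-edges incident to $u$ and to $v$; these overlap only in the single edge $uv$, so the events are independent whenever $u\not\sim_G v$. Common neighbours play no role, and the codegree sum you mention is irrelevant to the variance; the second-moment bound comes simply from controlling the $O(nd)$ adjacent pairs. In part~(ii), the original argument predates Theorem~\ref{expander graph Hamiltonian} and uses P\'osa rotation--extension directly rather than reducing to a $C$-expander black box; your sketch conflates the two, but either route works once the expansion of $G_{p_1}$ is established, and your diagnosis that the union bound over subsets is what forces the hypothesis $\lambda=o\bigl(d^{5/2}/(n^{3/2}\log^{3/2}n)\bigr)$ is correct.
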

    
 In their paper, Frieze and Krivelevich conjectured that when the degree is linear $n$, the weakest possible condition can replace the restriction of $\lambda$, that is, $\lambda=o(d)$, and also posed a question about the existence of a perfect matching in the random subgraphs. 
 Our first result settles this question and provides the threshold for perfect matching and the Hamiltonian cycle by demonstrating the robustness of the expander property for pseudorandom graphs with degree $\Omega(\log n)$. 
  \begin{theorem}\label{thm:main_thm_ndlambda}
    Given a constant $\gamma\in(0,1]$. Let $G$ be an $(n,d,\lambda)$-graph with $\lambda=o(d)$ and $d=\Omega(\log n)$. Then the following holds. 
    \begin{enumerate}[label=$(\roman*)$]
    \item  If $p(n)=(1+\gamma)\frac{\log n}{d}$, then $G_p$ contains a Hamiltonian cycle with high probability.
    \item If $p(n)=(1-\gamma)\frac{\log n}{d}$, then $G_p$ contains an isolated vertex and so no perfect matchings nor Hamiltonian cycles with high probability.
    \end{enumerate}
 \end{theorem}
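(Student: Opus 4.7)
For part (ii), the second moment method suffices: the expected number of isolated vertices in $G_p$ is $n(1-p)^d = (1+o(1))n e^{-pd} = n^{\gamma+o(1)}$, and using bijumbledness to bound the codegree-based covariance (for two vertices $u,v$, the joint probability of being isolated differs from the product only through $|N_G(u)\cap N_G(v)|$, which is small) yields $\mathrm{Var} = O(\mathbb{E}^2)$, so Chebyshev produces an isolated vertex w.h.p., precluding both a perfect matching and a Hamiltonian cycle. For part (i), the plan is to establish the three defining properties of a $C$-expander for $G_p$ with $C$ a suitably large constant, and then invoke Theorem~\ref{expander graph Hamiltonian}.

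\textbf{Verifying the expander conditions of $G_p$.} (1) By Chernoff applied to $\deg_{G_p}(v) \sim \mathrm{Bin}(d,p)$ and a union bound, $\delta(G_p) \ge (1-o(1))(1+\gamma)\log n \ge C$ w.h.p. (2) For any disjoint $A, B \subseteq V(G)$ with $|A|, |B| \ge n/(2C)$, bijumbledness yields $e_G(A,B) \ge d|A||B|/n - \lambda\sqrt{|A||B|} = \Omega(dn/C^2)$, so $e_{G_p}(A,B) \sim \mathrm{Bin}(e_G(A,B), p)$ has mean $\Omega(n\log n/C^2)$ and is positive w.h.p. (3) The key condition is $|N_{G_p}(X)| \ge C|X|$ for every $X$ with $1 \le k := |X| \le n/(2C)$. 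For $k \le (1-o(1))(1+\gamma)\log n/(C+1)$, this follows from $|N_{G_p}(X)| \ge \delta(G_p) - (k-1)$; for $k$ in the remaining range, note that $|N_{G_p}(X)| = \sum_{v \notin X}\indicate[v \text{ has a } G_p\text{-neighbor in } X]$ is a sum of \emph{independent} indicators (each $v$ contributes a term depending only on edges between $\{v\}$ and $X$), and apply the sharp Chernoff tail $\Pro[|N_{G_p}(X)|\le Ck] \le e^{-\mu}(e\mu/(Ck))^{Ck}$ with $\mu = \sum_{v\notin X}(1-(1-p)^{d_X(v)})$, where $d_X(v)$ denotes the number of $G$-edges from $v$ to $X$.

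\textbf{Main obstacle.} The technical heart is lower-bounding the mean as $\mu \ge (1-o(1))pdk = (1-o(1))(1+\gamma)k\log n$, which makes the Chernoff tail $n^{-(1+\gamma)k(1-o(1))}$ beat the entropy $\binom{n}{k}\le n^k$ for any fixed $\gamma>0$. Starting from $1-(1-p)^d \ge pd - (pd)^2/2$ and the spectral identity $\sum_v d_X(v)^2 \le d^2k^2/n + \lambda^2 k$ (which follows from $A^2 = (d^2/n) J + E^2$ with $\|E\|_\mathrm{op} \le \lambda$), one obtains $\mu \ge pdk - O(p^2 d^2 k^2/n) - O(p^2 \lambda^2 k)$. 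The first correction is $o(pdk)$ for $k \ll n/\log n$, and the complementary range $k \ge n^{1-\Theta(1/C)}$ can be handled by a direct union-bound argument on witness sets $Z \supseteq N_{G_p}(X)\setminus X$ of size $Ck$, estimating $(1-p)^{e_G(X,V\setminus(X\cup Z))}$ via bijumbledness. The delicate issue is the $p^2\lambda^2 k$ term, which is $o(pdk)$ only under $\lambda = o(d/\sqrt{\log n})$ — strictly stronger than the hypothesis $\lambda = o(d)$. To handle $\lambda$ as large as $o(d)$, I would split $V \setminus X$ into heavy vertices $T = \{v : d_X(v) > 1/p\}$ and light vertices; combining bijumbledness applied to $e_G(X,T)$ with the constraint $|T| \le p\sum_{v \in T} d_X(v)$ yields $\sum_{v \in T} d_X(v) = O((\lambda/d)^2 \cdot pdk) = o(pdk)$, so almost all of the edge mass $\sum d_X(v) = (1-o(1))dk$ lies on light vertices, on which $d_X(v) \le 1/p$ keeps the quadratic correction linear in $\sum_{\text{light}} d_X(v)$ and recovers $\mu \ge (1-o(1))pdk$. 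This heavy/light decomposition together with the matching sharp-Chernoff accounting is what I expect to be the main technical hurdle.
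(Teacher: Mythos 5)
Your architecture for part (i) (show $G_p$ is a $C$-expander, then invoke Theorem~\ref{expander graph Hamiltonian}) and your second-moment argument for part (ii) coincide with the paper's (Theorem~\ref{thm:robust_expander} and Proposition~\ref{pro:lower_bound_of_threshold}), and your treatment of the joint condition and of very small $X$ is essentially the paper's. But your verification of the expansion condition in the middle range has a genuine gap, plus one smaller error. The smaller error: at $p=(1+\gamma)\log n/d$ the minimum degree of $G_p$ is \emph{not} $(1-o(1))(1+\gamma)\log n$ w.h.p. The lower tail $\Pro[\mathrm{Bin}(d,p)\le(1-\eps)(1+\gamma)\log n]$ is roughly $n^{-\eps^2(1+\gamma)/2}$, which does not survive a union bound over $n$ vertices for any fixed $\eps<1$ when $\gamma$ is small. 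What is true, and what the paper proves, is only $\delta(G_p)\ge\delta\log n$ for a small constant $\delta=\delta(\gamma)$; this still covers your small-$k$ case after adjusting constants.

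The fatal issue is exactly the obstacle you flag, and your heavy/light decomposition does not close it. With $T=\{v:d_X(v)>1/p\}$ and $S=\sum_{v\in T}d_X(v)$, your own inequalities give $|T|\le pS$ and $S\le\frac{dkp}{n}S+\lambda\sqrt{kpS}$, hence $S=O(\lambda^2kp)$, not $O((\lambda/d)^2pdk)$ as you assert — you have dropped a factor of $d$. Since $\lambda^2kp/(dk)=(1+\gamma)(\lambda/d)^2\log n$, the heavy vertices carry a $o(1)$ fraction of the edge mass $dk$ only when $\lambda=o(d/\sqrt{\log n})$, i.e.\ exactly the same threshold you already identified for the naive bound, and strictly stronger than the hypothesis $\lambda=o(d)$. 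This is not a repairable bookkeeping issue: the only spectral input, $\sum_v d_X(v)^2\le d^2k^2/n+\lambda^2k$, is consistent with all $dk$ edges leaving $X$ landing on about $d^2k/\lambda^2$ vertices, each with $d_X(v)\approx\lambda^2/d\gg1/p$ when $\lambda\gg d/\sqrt{\log n}$; then $\mu\approx d^2k/\lambda^2$, which can be far below $k\log n$, and $e^{-\mu}(e\mu/(Ck))^{Ck}$ no longer beats $\binom{n}{k}$. So the route through a first-moment bound $\mu\ge(1-o(1))pdk$ cannot reach $\lambda=o(d)$ from spectral data alone. The paper sidesteps this by counting edges rather than reached vertices: for $\delta\log n/(2C)\le|X|\le n/\log n$ it shows that w.h.p.\ every set $Y$ with $|Y|\le(C+1)n/\log n$ spans at most $\frac{\delta\log n}{2C}|Y|$ edges of $G_p$ (a union bound needing only the first-moment estimate $e_G(Y)\le 2\eps d|Y|$ from the mixing lemma), so $X\cup N_{G_p}(X)$ cannot be small because it contains at least $\Omega(\delta k\log n)$ edges; for $n/\log n\le|X|\le n/(2C^2)$ it uses a witness-set argument comparing $e_{G_p}(X,X\cup Y)=\sum_{v\in X}d_{G_p}(v)\ge\delta k\log n$ (when $N_{G_p}(X)\subseteq Y$) against its mean $O(k\log n/C)$. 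You would need to replace your medium-range argument with one of this edge-counting type.
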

 
%Moreover, in the proof for perfect matching, we find that the expander property still holds in the random subgraphs. Inspired by this idea, we confirm their conjecture for the Hamiltonian cycle in the threshold version. 
%\begin{theorem}\label{Hamiltonian cycle}
   % Let $G$ be an $(n,d,\lambda)$-graph with $\lambda=o(d)$. Then the following holds for all sufficiently large $n$.     
   % \begin{enumerate}[label=(\roman*)]
        %\item  If \(p(n)=(1+o(1))\frac{\log n}{d}\), %$p(n)\gg \min\{\frac{\log n}{d},~1\}$, 
    %then $G_p$ contains a Hamiltonian cycle with high probability.
   % \item If \(p(n)=(1-o(1))\frac{\log n}{d}\), %$p(n)\ll \min\{\frac{\log n}{d},~1\}$, 
   % then $G_p$ contains no Hamiltonian cycle with high probability.
    %\end{enumerate}   
%\end{theorem}

As we only rely on the edge distribution of $G$, we extend~\cref{thm:main_thm_ndlambda} to bijumbled graphs. 

 \begin{theorem}\label{thm:main_thm_bijumbled}
    Given constants $\gamma,\alpha \in(0,1]$. Let $n\in\mathbb{N}$. Assume that $G$ is a $(q,\beta)$-bijumbled graph on vertex set $[n]$ with $\beta=o(qn)$, $q=\Omega(\log n/n)$ and $\delta(G)\ge \alpha qn$. Then the following holds for all sufficiently large $n$. 
    \begin{enumerate}[label=$(\roman*)$]
    \item  If $p(n)=(1+\gamma)\frac{\log n}{\alpha qn}$, then $G_p$ contains a Hamiltonian cycle with high probability and so contains a perfect matching with high probability when $n$ is even.
    \item If \(p(n)=\frac{\log n}{4qn}\), then $G_p$ contains an isolated vertex and has no perfect matchings nor Hamiltonian cycles with high probability.
    \end{enumerate}
 \end{theorem}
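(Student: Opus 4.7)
The plan is to prove part~(i) by verifying that $G_p$ is, with high probability, a $C$-expander for the constant $C$ in~\cref{expander graph Hamiltonian}, and then invoke that theorem; the perfect matching conclusion for even $n$ follows since any Hamiltonian cycle contains one. For part~(ii), the plan is to apply the second moment method to the number of $G_p$-isolated vertices.

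For part~(i), the $C$-expander condition has two ingredients. The large-set edge condition---that every pair of disjoint $X,Y\subseteq V$ with $|X|,|Y|\ge n/(2C)$ shares a $G_p$-edge---is standard: bijumbledness with $\beta=o(qn)$ and $|X|,|Y|=\Omega(n)$ gives $e_G(X,Y)\ge q|X||Y|/2$, so $\E[e_{G_p}(X,Y)]=\Omega(n\log n)$, and a Chernoff bound combined with a union bound over the $\le 4^n$ pairs closes this step. A Chernoff bound plus a union bound over vertices further yields $\delta(G_p)\ge (1+\gamma/3)\log n$ whp, since $p\delta(G)\ge(1+\gamma)\log n$. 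With this minimum degree in hand, the expansion $|N_{G_p}(X)|\ge C|X|$ for $|X|=k\le n/(2C)$ is immediate once $k\le(1+\gamma/3)\log n/(C+1)$: taking any $v\in X$ gives $|N_{G_p}(X)\setminus X|\ge (1+\gamma/3)\log n-(k-1)\ge Ck$.

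The core step---where I anticipate the bulk of the work---is verifying expansion for the remaining medium-to-large range of $k$. I will write $|N_{G_p}(X)|=\sum_{w\in V\setminus X}Y_w$ where $Y_w=\mathbbm{1}[w\text{ has some }G_p\text{-edge to }X]$, a sum of independent Bernoullis, and aim to show that its mean $\mu=\sum_w (1-(1-p)^{c_w})$, with $c_w=|N_G(w)\cap X|$, satisfies $\mu\ge (1+\gamma-o(1))k\log n$. The two main ingredients are $\sum_w c_w=e_G(X,V\setminus X)\ge\alpha qnk(1-o(1))$ (from the minimum-degree condition plus the bijumbled bound $2e_G(G[X])\le qk^2+\beta k$), and a bijumbled codegree bound $c_w\le qk+\beta\sqrt{k}$ that controls the Taylor-error term $\tfrac{p^2}{2}\sum c_w^2$ against the main term $pE_X$. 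A sharp (Poisson-form) Chernoff lower tail then gives $\Pr[|N_{G_p}(X)|\le Ck]\le \exp(-(1+\gamma-o(1))k\log n)$, and after the $\binom{n}{k}\le n^k$ union bound over $X$ we obtain $\exp(-(\gamma-o(1))k\log n)$, summable in $k$ since $\gamma>0$. The hard part will be maintaining this estimate across the full range of $k$: when some $c_w$ reach $pc_w\gtrsim 1$ the naive Taylor bound fails, and I will split $V\setminus X$ according to whether $pc_w<1$ (contributing approximately $pc_w$ to $\mu$) or $pc_w\ge 1$ (each contributing $\Omega(1)$, with bijumbledness ensuring enough such $w$ once $qk$ is large), showing in every regime that $\mu$ beats the threshold.

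For part~(ii), let $X_0$ count the $G_p$-isolated vertices. For the first moment, $\E X_0=\sum_v (1-p)^{\deg_G(v)}$; bijumbledness applied to $\{v\}$ and $V$ gives $\deg_G(v)\le qn+\beta\sqrt n$ for all $v$, and the number of vertices with $\deg_G(v)>2qn$ is at most $\beta^2/(q^2n)=o(n)$, so for the typical vertices $(1-p)^{\deg_G(v)}\ge e^{-2pqn(1+o(1))}=n^{-1/2+o(1)}$, giving $\E X_0\ge n^{1/2+o(1)}\to\infty$. For the variance, only edge-based covariance terms contribute and they sum to at most $p\cdot e(G)\cdot n^{-\alpha/2}=n^{1-\alpha/2}\log n/4=o((\E X_0)^2)$, so Chebyshev's inequality yields $\Pr[X_0=0]=o(1)$, precluding both a perfect matching and a Hamiltonian cycle in $G_p$.
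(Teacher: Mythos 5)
Your overall architecture for part~(i) (show $G_p$ is a $C$-expander whp, then invoke Theorem~\ref{expander graph Hamiltonian}) and your part~(ii) argument (second moment on isolated vertices, using bijumbledness to show all but $o(n)$ vertices have degree at most $2qn$) coincide with the paper's; part~(ii) is correct as sketched. However, part~(i) has two genuine gaps. First, the claim that a Chernoff bound plus a union bound over vertices gives $\delta(G_p)\ge(1+\gamma/3)\log n$ whp is false: a minimum-degree vertex of $G$ has expected $G_p$-degree exactly $(1+\gamma)\log n$, so the lower-tail probability at $(1+\gamma/3)\log n$ is only about $n^{-c}$ with $c=\Theta(\gamma^2)<1$ for small $\gamma$, and the union bound over $n$ vertices does not close (the expected number of such low-degree vertices tends to infinity). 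The paper avoids this by proving only $\delta(G_p)\ge\delta\log n$ for a constant $\delta\ll\gamma,\alpha$, where the lower tail \emph{is} $o(n^{-1})$, and then arranging the rest of the expansion argument to need only this weak bound.

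Second, the core step's target $\mu=\E|N_{G_p}(X)|\ge(1+\gamma-o(1))k\log n$ is unattainable once $k\gtrsim n/\log n$, since trivially $\mu\le n$; so the stated program cannot cover the large-set regime. Your proposed repair---splitting $V\setminus X$ by whether $pc_w<1$, with ``bijumbledness ensuring enough $w$'' in the other class---is not an argument in the sparse setting: the per-vertex bound $|c_w-qk|\le\beta\sqrt k$ is vacuous there (the error term dominates $qk$), and a Paley--Zygmund count of vertices with $c_w$ near average yields only $\Omega(\alpha^2 k)$ such vertices, short of the needed $Ck$. The paper handles $n/\log n\le|X|\le n/(2C^2)$ by a different route: if $|N_H(X)|\subseteq Y$ with $|Y|=C|X|$, then $e_H(X,X\cup Y)=\sum_{v\in X}d_H(v)\ge\delta\log n\cdot|X|$, while bijumbledness forces $\E[e_H(X,X\cup Y)]\le\tfrac{3\log n}{2\alpha C}|X|$, and a Chernoff upper tail beats the $\binom{n}{Cs}\binom{n}{s}=e^{O(Cs\log\log n)}$ union bound. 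Similarly, for the medium regime it bounds $e_H(Y)$ for all candidate sets $Y=X\cup N_H(X)$ rather than analysing $|N_{G_p}(X)|$ directly. You would need to import arguments of this type (or otherwise substantiate the large- and medium-$k$ regimes) for your proof to go through.
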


 %\begin{theorem}\label{hamiltonian cycle2}
 %   Let $n\in\mathbb{N}$ be an integer, $\alpha\in (0,1]$ and let $G$ be a $(q,\beta)$-jumbled graph on the vertex set $[n]$ with $\beta=o(qn)$ and $\delta(G)\ge \alpha qn$. Then the following holds for all sufficiently large $n$. 
 %   \begin{enumerate}[label=(\roman*)]
 %   \item  If \(p(n)=(1+o(1))\frac{\log n}{qn}\), %$p(n)\gg \min\{\frac{\log n}{qn},~1\}$, 
 %   then $G_p$ contains a Hamiltonian cycle with high probability.
 %   \item If \(p(n)=\frac{\log n}{4qn}\), %$p(n)\ll \min\{\frac{\log n}{qn},~1\}$, 
 %   then $G_p$ contains no Hamiltonian cycle with high probability.
 %   \end{enumerate}
 %\end{theorem}

Note that in \Cref{thm:main_thm_bijumbled} we need to impose a minimum degree as \(\delta(G)\ge \alpha qn\), where $\alpha$ can be arbitrarily small.

The 1-statement of 
\Cref{thm:main_thm_bijumbled} (and also \Cref{thm:main_thm_ndlambda}) is proved by showing that with high probability, the random sparsification $G_p$ is a $C$-expander for sufficiently large $C$.
Then $G_p$ is Hamiltonian by~\Cref{expander graph Hamiltonian}.
We note that the proof of~\Cref{expander graph Hamiltonian} from~\cite{draganic2024hamiltonicity} is highly non-trivial and technical.
So for perfect matching, we provide a self-contained, simple proof (Lemma~\ref{lem:tutte–expander_lemma}) by showing that all $ C$-expanders of even order for $C\ge 3$ have a perfect matching, by verifying \textit{Tutte's condition}.
Moreover, we believe that a bipartite version of this result could be beneficial, and thus we include it here.
It can be proved (simply) by verifying Hall's condition.

We say a bipartite graph $G$ with parts $A$ and $B$ \emph{\((q,\beta)\)-bijumbled} if $|A|=|B|$ and for every $X\subseteq A$ and $Y\subseteq B$, we  have
\[|e(X,Y)-p|X||Y||\le \beta \sqrt{|X||Y|}.\]

\begin{theorem}
    Given constants $\gamma,\alpha\in(0,1]$. Let $n\in\mathbb{N}$ be an integer. Assume that $G$ is a balanced $(q,\beta)$-bijumbled bipartite graph on vertex set $[2n]$ with $\beta\le\varepsilon qn$, $q=\Omega(\log n/n)$ and $\delta(G)\ge \alpha qn$. Then the following holds for all sufficiently large $n$. 
    \begin{enumerate}[label=$(\roman*)$]
    \item  If $p(n)=(1+\gamma)\frac{\log n}{\alpha qn}$, then $G_p$ contains a perfect matching with high probability.
    \item If \(p(n)=\frac{\log n}{4 qn}\), then $G_p$ contains an isolated vertex and has no perfect matchings with high probability.
    \end{enumerate}
 \end{theorem}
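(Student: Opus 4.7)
The plan is to verify Hall's condition in $G_p$, from which a perfect matching follows. By the bipartite symmetry it suffices to show that with high probability $|N_{G_p}(X)| \ge |X|$ holds for every $X \subseteq A$ (and, by the same argument, every $X \subseteq B$) with $|X| \le n/2$; this is the bipartite analogue of the Tutte-expander approach used for the non-bipartite theorem.

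First, a Chernoff bound combined with a union bound over the $2n$ vertices gives $\delta(G_p) \ge D := (1+\gamma/2)\log n$ with high probability, using that $\E[\deg_{G_p}(v)] \ge \alpha q n \cdot p = (1+\gamma)\log n$. The case $|X| \le D$ is then immediate: for any $v \in X$, $|N_{G_p}(X)| \ge \deg_{G_p}(v) \ge D \ge |X|$.

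For $|X| = s > D$, assume for contradiction that $|N_{G_p}(X)| \le s - 1$ and set $T := N_{G_p}(X)$. The min-degree condition forces $e_{G_p}(X, T) \ge Ds$, while bijumbledness gives $e_G(X, T) \le qs(s-1) + \beta\sqrt{s(s-1)} \le s(qs+\beta)$; combining yields $D \le qs + \beta$, ruling out the bad event for $D < s < (D-\beta)/q$. For $s$ above that threshold, the failure of Hall is equivalent to the existence of $Y := B \setminus N_{G_p}(X)$ with $|Y| \ge n - s + 1 \ge n/2$ and $e_{G_p}(X, Y) = 0$. When $s \ge c\varepsilon^2 n$ for a suitable absolute $c > 0$, bijumbledness gives $e_G(X, Y) \ge q|X||Y|/2 \ge qsn/4$, so $(1-p)^{e_G(X,Y)} \le n^{-(1+\gamma)s/(4\alpha)}$, and a union bound over the $\binom{n}{s}\binom{n}{s-1}$ choices of $(X,Y)$ sums to $o(1)$.

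The main obstacle is closing any residual gap $(D-\beta)/q \le s < c\varepsilon^2 n$, which is present when $qn \gg \log n$. For this range, a secondary Chernoff argument works: conditional on $\delta(G_p) \ge D$, the bad event requires $e_{G_p}(X, T) \ge Ds$, whereas $\E[e_{G_p}(X, T)] = p \cdot e_G(X, T) \le p(qs^2 + \beta s) = O(\varepsilon s \log n / \alpha)$; hence $\Pro[e_{G_p}(X, T) \ge Ds] \le (e/M)^{Ds}$ with $M = \Omega(\alpha/\varepsilon)$, and taking $\varepsilon$ small enough (depending on $\alpha, \gamma$) makes this $n^{-Cs}$ for $C$ as large as needed, again summing to $o(1)$. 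For the 2-statement, with $p = \log n/(4qn)$ the expected degree of each vertex is on the order of $\log n/4$, so $\Pro[\deg_{G_p}(v) = 0] = n^{-\Theta(1)}$; the second-moment method on the number of isolated vertices (facilitated by the independence of isolation events for disjoint vertex pairs in the bipartite model) then yields at least one isolated vertex whp, obstructing a perfect matching.
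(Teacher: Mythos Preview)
Your overall strategy—verifying Hall's condition in $G_p$ by splitting the range of $|X|$ into small/middle/large and treating each with a combination of minimum-degree and Chernoff estimates—is exactly the route the paper intends (it says the bipartite theorem follows ``simply by verifying Hall's condition''), and your large-range and middle-range analyses are structurally sound.

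The genuine gap is the minimum-degree step. You claim that Chernoff plus a union bound over the $2n$ vertices yields $\delta(G_p)\ge D=(1+\gamma/2)\log n$ whp. This is false for every $\gamma\in(0,1]$. For a vertex $v$ with $\deg_G(v)=\alpha qn$ (the extremal case), $\deg_{G_p}(v)$ has mean exactly $(1+\gamma)\log n$, and the sharp lower-tail bound gives
\[
\Pr\bigl[\deg_{G_p}(v)\le(1+\gamma/2)\log n\bigr]=n^{-\Theta(\gamma^2)},
\]
with exponent of order $\gamma^2/8$ for small $\gamma$; even at $\gamma=1$ the exponent is below $0.07$. Multiplying by $2n$ the union bound diverges, so one cannot conclude $\delta(G_p)\ge(1+\gamma/2)\log n$.

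The fix, which is what the paper does in its robust-expander proof (Theorem~\ref{thm:robust_expander}), is to claim only $\delta(G_p)\ge\delta\log n$ for a constant $\delta\ll\gamma$ chosen so that $\delta\log(e(1+\gamma)/\delta)<\gamma$; then the per-vertex failure probability is $o(n^{-1-\gamma/4})$ and the union bound goes through. With this corrected $D=\delta\log n$, your case $|X|\le D$ still works, and your middle-range Chernoff argument still closes (you now need $\eps$ small in terms of $\delta$ and $\alpha$, consistent with the paper's hierarchy $\eps\ll\delta\ll\gamma,\alpha$). Your deterministic sub-range $D<s<(D-\beta)/q$ becomes a special case of the middle range and can be absorbed. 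For the $0$-statement, your second-moment sketch is right in spirit, but bijumbledness does not bound the maximum degree; as in Proposition~\ref{pro:lower_bound_of_threshold} you must first restrict to the $(1-\eps)n$ vertices of degree at most $2qn$ before estimating $\E[I]$.
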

%Since the definition of bijumbled graphs can not imply any restriction on the minimum degree, it may contain an isolated vertex. 

\medskip
In the same paper, Frieze and Krivelevich~\cite{Friezerandomhamsparse} posed the robustness problem about the triangle factor. 
Here, we determine the robustness threshold for the existence of a triangle factor in the random subgraph of $(n,d,\lambda)$-graphs, for sufficiently dense expander graphs. 
\begin{theorem}\label{thm:main_thm_triangle_factor}
Let $n\in \mathbb{N}$ be an integer with $3|n$ and $0<1/n\ll1/C\ll\varepsilon\ll1$. Let $G$ be an $(n,d,\lambda)$-graph with $d\ge Cn^{\frac{5}{6}}\log^{\frac{1}{2}}n$ and $\lambda\le\varepsilon d^2/n$. 
\begin{enumerate}[label=$(\roman*)$]
\item\label{traingle factor1}    If $p(n)\gg \frac{n^{\frac{1}{3}}\log^{\frac{1}{3}} n}{d}$, then $G_p$ contains a triangle factor with high probability.
\item\label{traingle factor2} If $p(n)\ll \frac{n^{\frac{1}{3}}\log^{\frac{1}{3}}n}{d}$, then $G_p$ contains no triangle factors with high probability. 
\end{enumerate}

\end{theorem}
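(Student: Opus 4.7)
The plan splits into two parts. For part \ref{traingle factor2} the approach is a standard lower-tail argument: I would show that with high probability some vertex of $G_p$ lies in no triangle of $G_p$, which already precludes a triangle factor. Using the expander mixing lemma with $\lambda\le \varepsilon d^2/n$, the number of triangles through any fixed vertex $v$ in $G$ is $t_v=(1+o(1))\tfrac{d^3}{2n}$; so the probability that no triangle through $v$ survives in $G_p$ is at least $\exp(-(1+o(1))p^3 t_v)$, which is $\omega(1/n)$ under the hypothesis $p\ll n^{1/3}\log^{1/3}n/d$. A second-moment computation on the count of such ``triangle-isolated'' vertices, exploiting that the triangle-neighborhoods of separated vertices are almost edge-disjoint (again by bijumbledness), then produces such a vertex w.h.p.

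For part \ref{traingle factor1} my plan is to combine the Park--Pham theorem with a coupling between the triangles surviving in $G_p$ and the random hyperedges of the triangle hypergraph $H(G)$. First, by iterative absorption, I would build a $\kappa$-spread probability measure $\mu$ on the set of triangle factors of $G$ with $\kappa$ of order $n/d^3$ per triangle. The construction reserves a small robust absorber-family of triangles (guaranteed by the strong edge distribution of $G$), then at each stage samples a partial triangle tiling on the residual vertex set from a distribution whose triangle-marginal is $O(n/d^3)$, relying on the fact that the residual graph inherits enough pseudorandomness to preserve this bound; finally, the absorber is used to tile a small leftover. The Park--Pham theorem applied to $\mu$ then gives that the $\pi$-random sub-hypergraph of $H(G)$ contains a triangle factor w.h.p.\ for $\pi = C\kappa\log n = Cn\log n/d^3$. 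To transfer this conclusion to $G_p$, I would prove the coupling advertised in the abstract: the set of triangles of $G$ that survive in $G_p$ stochastically contains an independent $\pi$-random subset of triangles whenever $p^3 \ge C'\pi$, i.e.\ precisely when $p\gg n^{1/3}\log^{1/3}n/d$. This step uses the positive association of triangle-indicators in $G_p$ (two triangles sharing an edge are positively correlated) together with a two-round exposure / sprinkling argument, degrading $p$ to $p/2$ for the first round and using the second round to absorb dependencies between shared edges.

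The main obstacle is the iterative-absorption construction of $\mu$. After each peeling round the residual graph must still inherit the bijumbledness, pairwise codegrees, and minimum degree into the uncovered part needed to run the next partial tiling with the desired marginal bound, and it is exactly at this point that the assumptions $d\ge C n^{5/6}\log^{1/2}n$ and $\lambda\le\varepsilon d^2/n$ enter in a nearly tight way, matching the lower bound in part \ref{traingle factor2}. The coupling between hyperedges of $H(G)_\pi$ and triangles of $G_p$, though a genuinely new ingredient, should be comparatively routine once one exploits positive correlation and is willing to lose constant factors in $p$ via sprinkling.
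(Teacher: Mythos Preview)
Your global architecture for part~\ref{traingle factor1} matches the paper's: build an $O(n/d^3)$-spread distribution on triangle factors of $G$, apply FKNP/Park--Pham to the $K_3$-hypergraph $H_3(G)$, and then couple $H_3(G)_\pi$ into the triangle set of $G_p$. For part~\ref{traingle factor2} your second-moment plan is sound (indeed the paper itself only writes out a first-moment bound at this point, which on its own does not give the w.h.p.\ conclusion).

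There is, however, a real gap in your coupling step. Positive association of triangle indicators in $G_p$ does \emph{not} imply that the triangle set of $G_p$ stochastically dominates an independent $\pi$-sample of triangles: positively correlated $\{0,1\}$-variables need not dominate their independent counterparts (take $X_1=X_2\sim\mathrm{Ber}(1/2)$, which fail to dominate two i.i.d.\ $\mathrm{Ber}(1/2)$ on the increasing event $\{X_1\vee X_2=1\}$), and sprinkling a second round of $G_{p/2}$ does not circumvent this. The paper instead adapts Riordan's sequential-revelation coupling to the sparse host: one tests the triangles of $G$ one by one and shows that the conditional probability that the current triangle is present, given everything revealed so far, stays above $\pi=ap^3$ outside a bad event of probability $o(1)$. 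Controlling that conditional probability in a sparse $G$ requires ruling out specific dense configurations of linear $3$-cycles in the random triangle hypergraph (the family $\mathcal{F}$ in Section~\ref{sparse couple}), and this is where the expander-mixing bounds on $G$ are used in a delicate way; far from routine, this is one of the paper's main technical contributions.

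Your spread-measure construction also differs from the paper's. Rather than reserving an absorber family, the paper samples a random vortex $V(G)=V_0\supseteq V_1\supseteq\cdots\supseteq V_N$ of geometrically shrinking subsets (each $G[V_i]$ is again an almost-regular expander by a spectral lemma for random induced subgraphs), applies a cover-down step to tile $V_{i-1}\setminus V_i$ using at most an $\alpha^2$-fraction of $V_i$, and finishes the last set $V_N$ deterministically via Morris' bijumbled clique-factor theorem. The $O(n/d^3)$-spread bound is then obtained by summing over which level of the vortex each prescribed triangle lands in, using the randomness of the vortex itself (property~\ref{item:1.2.5} of Lemma~\ref{lem:vortex_lemma}); the hypotheses $d\ge Cn^{5/6}\log^{1/2}n$ and $\lambda\le\varepsilon d^2/n$ are exactly what keeps every $G[V_i]$ sufficiently pseudorandom down to $|V_N|\approx n^{4/3}/d$. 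Your absorber-based variant may well be workable, but you would still need to produce the per-triangle $O(n/d^3)$ marginal at every stage and to control the spreadness of the final absorbing step, neither of which is sketched.
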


We remark that in this result, the restriction of $\lambda$ is optimal due to an excellent construction of Alon~\cite{Alon_construction}, which shows that there exists a triangle-free $(n,d,\lambda)$-graph with $\lambda=\Omega(d^2/n)$. Since we need to construct a much denser structure than a perfect matching, our strategy to examine the expander property for the random model fails in this case. 

Our proof of~\Cref{thm:main_thm_triangle_factor} is based on the recent breakthrough in \cite{FKNP}, which reduces the problem to specifying a ``spread'’ distribution on the desired guest structure in the (deterministic) host structure. We apply the iterative absorption technique to construct such a spread distribution. 
Since the host graph is sparse, we will prove and utilize a sparse version of the coupling lemma (known as a nice result of Riordan~\cite{riordan2022random} for the dense case) and employ a novel algorithm to complete the "cover-down" step. 
In addition, we will use a recent lemma of Ferber, Han, Mao, and Vershynin~\cite{ferber2024hamiltonicity} on random induced subgraphs of $(n,d,\lambda)$-graphs, which guarantees the expander property for random induced subgraphs. 

\medskip
\noindent
\textbf{Organization.}
In Section~\ref{pre}, we will introduce some notations and probabilistic tools. In Section~\ref{robustexpan}, we prove the robustness of the expander property. In Section~\ref{perfecmatham}, we will show our main results, Theorem~\ref{thm:main_thm_ndlambda}. In Section~\ref{sparse couple}, we show a coupling lemma for triangles in sparse graphs. In Section~\ref{iterative ab}, we prove~Theorem~\ref{thm:main_thm_triangle_factor}. At last, we conclude our paper in Section~\ref{concluding}.

\section{Preliminaries}~\label{pre}
In this section, we first introduce some notations and probabilistic tools. Given a graph $G=(V,E)$ and a subgraph $H\subseteq G$. 
We denote the number of edges incident to $v$ in $H$ by $d_H(v)$. 
For a subset $X$ of $V(G)$, let $d_H(v, X)$ be the number of edges between vertex $v$ and $X$. 
%For any two subset $A_1,A_2$ of $V(G)$, let $e_H(A_1,A_2)$ be the number of pairs $(u,v)\in A_1\times A_2$ with $uv\in E(G)$ and note that under this definition, the edges in $A_1\cap A_2$ are counted twice.
%$e_H(A_1)$ be the number of edges with both ends in $A_1$. We note that $e_H(A_1,A_1)=2e_H(A_1,A_1)$ since every edge is counted twice in $e_H(A_1,A_2)$. 
We say a graph is $(n,(1\pm\gamma)d,\lambda)$-\emph{graph}, if every vertex has the degree in the interval $(1\pm\gamma)d$ and the second largest eigenvalue in absolute value is at most~$\lambda$. 
Now, we list some useful lemmas used in the paper.
\begin{lemma}[\cite{chernoff1952measure}, Chernoff's bound]\label{lem:chernoff_bound}
Let $X$ be either:
\begin{enumerate}
        \item[$\bullet$] a sum of independent random variables, each of which takes values in $[0,1]$, or
        \item[$\bullet$] hypergeometrically distributed (with any parameters).
\end{enumerate}
Then for any $\delta>0$ we have \[\mathbb{P}\left[X\le(1-\delta)\mathbb{E}[X]\right]\le{\exp\left(-\delta^2\mathbb{E}[X]/2\right)},\text{ and } \mathbb{P}\left[X\ge(1+\delta)\mathbb{E}[X]\right]\le{\exp\left(-\delta^2\mathbb{E}[X]/(2+\delta)\right)}.\]
\end{lemma}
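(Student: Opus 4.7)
The plan is to use the standard exponential moment (Chernoff--Cram\'er) method. I would first handle the independent sum case and then reduce the hypergeometric case to it via a classical coupling argument of Hoeffding.

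\textbf{Step 1 (exponential moment method for the independent case).} Write $X=\sum_{i=1}^n X_i$ with $X_i\in[0,1]$ independent, and let $\mu=\mathbb{E}[X]$. For any $t>0$, Markov's inequality applied to $e^{tX}$ gives $\mathbb{P}[X\ge(1+\delta)\mu]\le e^{-t(1+\delta)\mu}\prod_{i=1}^n\mathbb{E}[e^{tX_i}]$. Since $x\mapsto e^{tx}$ is convex and $X_i\in[0,1]$, one has $e^{tX_i}\le 1+X_i(e^t-1)$, so $\mathbb{E}[e^{tX_i}]\le 1+\mathbb{E}[X_i](e^t-1)\le \exp(\mathbb{E}[X_i](e^t-1))$ using $1+x\le e^x$. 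Multiplying over $i$ yields $\mathbb{E}[e^{tX}]\le \exp(\mu(e^t-1))$, and hence
\[
\mathbb{P}[X\ge(1+\delta)\mu]\le \exp\bigl(\mu(e^t-1)-t(1+\delta)\mu\bigr).
\]
Optimizing with $t=\log(1+\delta)$ gives the classical bound $\exp(-\mu((1+\delta)\log(1+\delta)-\delta))$, and the elementary inequality $(1+\delta)\log(1+\delta)-\delta\ge \delta^2/(2+\delta)$ for $\delta>0$ yields the stated upper tail. For the lower tail one repeats the argument with $t<0$ (equivalently, applies the same bound to $-X_i$), uses the sharper $(1-\delta)\log(1-\delta)+\delta\ge\delta^2/2$ valid for $\delta\in(0,1)$ (and trivially the lower tail is vacuous for $\delta\ge 1$), giving $\mathbb{P}[X\le(1-\delta)\mu]\le\exp(-\delta^2\mu/2)$.

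\textbf{Step 2 (reduction of the hypergeometric case).} Let $X$ count the number of ``successes'' in a sample of size $m$ drawn without replacement from a population of size $N$ with $K$ successes, so $\mu=\mathbb{E}[X]=mK/N$. Hoeffding's classical theorem states that for any convex function $\phi$,
\[
\mathbb{E}[\phi(X)]\le \mathbb{E}[\phi(Y)],
\]
where $Y$ is a sum of $m$ i.i.d.\ Bernoulli$(K/N)$ random variables; note that $Y$ has the same mean $\mu$. Applying this to $\phi(x)=e^{tx}$ shows that the moment generating function of the hypergeometric $X$ is dominated by that of the independent Bernoulli sum $Y$, so the Chernoff argument of Step 1 carries over verbatim and produces the same upper and lower tail estimates.

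\textbf{Main obstacle.} There is no serious obstacle here; the result is classical. The only mildly technical points are (a) verifying the elementary numerical inequalities $(1+\delta)\log(1+\delta)-\delta\ge \delta^2/(2+\delta)$ and $(1-\delta)\log(1-\delta)+\delta\ge\delta^2/2$, which can be checked by comparing Taylor expansions or by differentiation, and (b) invoking Hoeffding's coupling to handle sampling without replacement. Since Lemma~\ref{lem:chernoff_bound} is cited as a known fact, I would simply reference \cite{chernoff1952measure} (together with Hoeffding's paper for the hypergeometric reduction) rather than include the full calculation in the paper.
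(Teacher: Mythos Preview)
Your proposal is correct and indeed gives the standard proof via the Cram\'er--Chernoff exponential moment method together with Hoeffding's convexity/coupling argument for the hypergeometric case. The paper, however, does not prove this lemma at all: it is stated as a cited fact from \cite{chernoff1952measure} with no accompanying argument, exactly as you anticipate in your final paragraph.
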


The next well-known lemma describes the ``random-like'' behavior of $(n,d,\lambda)$-graphs. 
\begin{lemma}[\cite{alon2016probabilistic}, Expander mixing lemma]\label{expandermixing}
    Let $G$ be an $(n,d,\lambda)$-graph. Then, for any two subsets $S,T\subseteq V(G)$, we have
    \[\left|e(S,T)-\frac{d}{n}|S||T|\right|\le \lambda\sqrt{|S||T|}.\]
\end{lemma}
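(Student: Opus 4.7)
The plan is to use the spectral decomposition of the adjacency matrix $A$ of $G$. Since $G$ is $d$-regular, the all-ones vector $\mathbf{1}$ is an eigenvector of $A$ with eigenvalue $d$, and we can pick an orthonormal basis $\mathbf{1}/\sqrt{n}=u_1,u_2,\dots,u_n$ of eigenvectors with corresponding eigenvalues $d=\lambda_1,\lambda_2,\dots,\lambda_n$, where $|\lambda_i|\le\lambda$ for $i\ge 2$. Writing $e(S,T)$ as the bilinear form $\mathbf{1}_S^\top A\mathbf{1}_T$, where $\mathbf{1}_S$ and $\mathbf{1}_T$ are the characteristic vectors, lets us separate the ``main term'' from the ``error term'' via this basis.

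The key step is to decompose $\mathbf{1}_S=\alpha\mathbf{1}+v_S$ and $\mathbf{1}_T=\beta\mathbf{1}+v_T$, where $\alpha=|S|/n$, $\beta=|T|/n$, and $v_S,v_T\perp\mathbf{1}$. Expanding the bilinear form and using $A\mathbf{1}=d\mathbf{1}$ together with $v_S,v_T\perp\mathbf{1}$, the cross terms vanish, and we obtain
\[
e(S,T)=\mathbf{1}_S^\top A\mathbf{1}_T=\alpha\beta\cdot\mathbf{1}^\top A\mathbf{1}+v_S^\top A v_T=\frac{d|S||T|}{n}+v_S^\top A v_T.
\]
It then remains to bound $|v_S^\top A v_T|$. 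Expanding $v_S$ and $v_T$ in the basis $u_2,\dots,u_n$ and applying Cauchy--Schwarz gives $|v_S^\top A v_T|\le\lambda\|v_S\|\|v_T\|$.

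Finally, since $\|\mathbf{1}_S\|^2=|S|$ and $\|\alpha\mathbf{1}\|^2=|S|^2/n$, orthogonality yields $\|v_S\|^2=|S|-|S|^2/n\le|S|$, and similarly $\|v_T\|^2\le|T|$. Combining everything gives
\[
\Big|e(S,T)-\tfrac{d}{n}|S||T|\Big|=|v_S^\top A v_T|\le\lambda\sqrt{|S||T|},
\]
as desired. There is no serious obstacle here; the only thing one must be careful about is ensuring the cross terms really do vanish and that the $\ell_2$ norm estimates are sharp enough to avoid losing a constant factor.
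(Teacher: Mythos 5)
Your proof is correct and is exactly the standard spectral argument for the Expander Mixing Lemma. The paper does not prove this lemma itself but cites it from Alon and Spencer's \emph{The Probabilistic Method}, and the argument you give — write $\mathbf{1}_S = \tfrac{|S|}{n}\mathbf{1} + v_S$ with $v_S\perp\mathbf{1}$, note the cross terms vanish because $A\mathbf{1}=d\mathbf{1}$, and bound $|v_S^\top A v_T|\le\lambda\|v_S\|\|v_T\|\le\lambda\sqrt{|S||T|}$ using the spectral bound on $\mathbf{1}^\perp$ — is precisely the one appearing in that reference. There is nothing to flag.
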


\begin{comment}
\begin{lemma}[McDiarmid's inequality for random permutations]\label{lem:Mcdiarmid_inequality_for_random_permutations}
Let $X$ be a non-negative random variable determined by a random permutation $\pi$ of $n$ such that the following hold for some $c,r>0$: 
\begin{enumerate}
        \item[$\bullet$] Interchanging two elements of $\pi$ can affect the value of $X$ by at most $c$;
        \item[$\bullet$] For any $s$, if $X\ge s$ then there is a set of at most $rs$ choices whose outcomes certify $X\ge s$.
\end{enumerate}
Then, for any $0\le t\le\mathbb{E}[X]$, 
\[\mathbb{P}\left[\left|X-\mathbb{E}[X]\right|\ge t+60c\sqrt{r\mathbb{E}[X]}\right]\le{4\exp\left(-t^2/(8c^2r\mathbb{E}[X])\right)}.\]
\end{lemma}

\end{comment}

In~\cite{ferber2024hamiltonicity}, Ferber, Han, Mao, and Vershynin generalized the expander mixing lemma to almost regular expander graphs as follows.
\begin{lemma}[\cite{ferber2024hamiltonicity}, mixing lemma for almost regular expanders]\label{lem:mixing_lemma_for_almost_expnaders}
Let $G$ be an $(n,(1\pm\gamma)d,\lambda)$-graph. Then, for any two subsets $S,T\subseteq V(G)$, we have
\[\frac{(1-\gamma)^2d|S||T|}{(1+\gamma)n}-\eps\le e(S,T)\le \frac{(1+\gamma)^2d|S||T|}{(1-\gamma)n}+\eps, \]
where $\eps=\frac{1+\gamma}{1-\gamma}\lambda\sqrt{|S||T|}$.
\end{lemma}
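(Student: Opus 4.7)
The plan is to mimic the spectral proof of~\Cref{expandermixing}, now carefully handling the fact that the top eigenvector is no longer exactly proportional to $\mathbf 1$. Let $A$ denote the adjacency matrix of $G$, with orthonormal eigendecomposition $A=\sum_{i=1}^{n}\mu_i u_i u_i^{T}$ ordered so that $\mu_1\ge|\mu_2|\ge\cdots\ge|\mu_n|$; by hypothesis $|\mu_i|\le\lambda$ for every $i\ge 2$. Perron--Frobenius gives $u_1\ge 0$ componentwise and $\mu_1>0$, and combining the Rayleigh quotient at $x=\mathbf 1$ (giving $\mu_1\ge\bar d$) with the max-row-sum bound on the spectral radius (giving $\mu_1\le\Delta$) yields
\[
(1-\gamma)d\le\bar d\le\mu_1\le\Delta\le(1+\gamma)d.
\]

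Writing $A=\mu_1 u_1 u_1^{T}+B$ with $\|B\|_{\mathrm{op}}\le\lambda$, I would expand
\[
e(S,T)=\mathbf 1_S^{T}A\mathbf 1_T=\mu_1(\mathbf 1_S^{T}u_1)(u_1^{T}\mathbf 1_T)+\mathbf 1_S^{T}B\mathbf 1_T,
\]
and bound the error by Cauchy--Schwarz as $|\mathbf 1_S^{T}B\mathbf 1_T|\le\lambda\sqrt{|S||T|}\le\tfrac{1+\gamma}{1-\gamma}\lambda\sqrt{|S||T|}=\varepsilon$. The key step is then to show that every entry of the Perron eigenvector satisfies
\[
u_1(v)\in\Bigl[\tfrac{1}{\sqrt n}\sqrt{\tfrac{1-\gamma}{1+\gamma}},\ \tfrac{1}{\sqrt n}\sqrt{\tfrac{1+\gamma}{1-\gamma}}\Bigr].
\]
This bound follows from exploiting the eigenvalue equation $\mu_1 u_1(v)=\sum_{w\sim v}u_1(w)$ at the argmax/argmin of $u_1$ together with the pointwise degree bounds and the normalization $\|u_1\|_2=1$: evaluating the equation at a vertex attaining $\max u_1$ (resp. $\min u_1$) and combining with $\mu_1\in[(1-\gamma)d,(1+\gamma)d]$ gives the ratio control, which together with $\sum_v u_1(v)^2=1$ pins both extremes to the desired range around $1/\sqrt n$. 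Given this, $\mathbf 1_S^{T}u_1\in\bigl[|S|\sqrt{(1-\gamma)/(1+\gamma)}/\sqrt n,\,|S|\sqrt{(1+\gamma)/(1-\gamma)}/\sqrt n\bigr]$, and similarly for $T$; combining with the bounds on $\mu_1$ traps the main term in
\[
\mu_1(\mathbf 1_S^{T}u_1)(u_1^{T}\mathbf 1_T)\in\Bigl[\tfrac{(1-\gamma)^{2}d|S||T|}{(1+\gamma)n},\ \tfrac{(1+\gamma)^{2}d|S||T|}{(1-\gamma)n}\Bigr],
\]
and adding the error term yields the claimed two-sided inequality.

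The principal obstacle is the eigenvector estimate in the middle paragraph: in the exactly $d$-regular case $u_1=\mathbf 1/\sqrt n$ and the bound is trivial, but in the almost-regular case one must leverage the eigenvalue equation, degree bounds, and normalization simultaneously, and a naive pointwise application of $\mu_1u_1(v)=\sum_{w\sim v}u_1(w)$ only yields the vacuous $\max u_1/\min u_1\le (1+\gamma)/(1-\gamma)\cdot \max u_1/\min u_1$. A cleaner route, which I would fall back on if needed, is to invoke the Chung-style expander mixing lemma for general graphs, $|e(S,T)-d(S)d(T)/(2m)|\le\hat\lambda\sqrt{d(S)d(T)}$, substitute $d(S)\in[(1-\gamma)d|S|,(1+\gamma)d|S|]$ and $2m\in[(1-\gamma)dn,(1+\gamma)dn]$, and show $\hat\lambda\le\lambda/((1-\gamma)d)$ via the near-identity relation $\hat A=D^{-1/2}AD^{-1/2}\approx A/d$; this immediately produces both the $(1\pm\gamma)^{2}/(1\mp\gamma)$ scaling of the main term and the $\tfrac{1+\gamma}{1-\gamma}$ factor in $\varepsilon$.
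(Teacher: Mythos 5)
The paper does not actually prove this lemma---it is quoted verbatim from \cite{ferber2024hamiltonicity}---so there is no in-paper argument to compare against; I will therefore assess your proposal on its own terms. Your primary route has a genuine gap exactly where you flag it: the pointwise estimate $u_1(v)\in\bigl[\sqrt{(1-\gamma)/((1+\gamma)n)},\ \sqrt{(1+\gamma)/((1-\gamma)n)}\bigr]$ is the entire content of the lemma (the decomposition $A=\mu_1u_1u_1^T+B$, the bounds on $\mu_1$, and the Cauchy--Schwarz step are routine), and it does not follow from the eigenvalue equation at the argmax/argmin together with the normalization. As you observe, that application is vacuous; worse, the natural quantitative substitutes (Davis--Kahan gives only an $\ell_2$ bound $\|u_1-\mathbf 1/\sqrt n\|_2=O(\gamma)$, and the crude bound $\mu_1 u_1(v)\le\sqrt{d(v)}\,\|u_1\|_2$ gives only $u_1(v)=O(1/\sqrt d)$), so delocalization at scale $1/\sqrt n$ is not available by these means when $d\ll n$. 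An argument that stops here has not proved the lemma.

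Your fallback, however, is essentially the right proof and even reproduces the stated constants exactly (the main term $\mathrm{vol}(S)\mathrm{vol}(T)/\mathrm{vol}(G)$ is trapped in $\bigl[\tfrac{(1-\gamma)^2d|S||T|}{(1+\gamma)n},\tfrac{(1+\gamma)^2d|S||T|}{(1-\gamma)n}\bigr]$, and $\hat\lambda\sqrt{\mathrm{vol}(S)\mathrm{vol}(T)}\le\tfrac{1+\gamma}{1-\gamma}\lambda\sqrt{|S||T|}$); the one step you leave unjustified, $\hat\lambda\le\lambda/((1-\gamma)d)$, cannot be obtained from the ``near-identity'' $\hat A\approx A/d$: one has $\|\hat A-A/d\|_{\mathrm{op}}=O(\gamma)$, so Weyl only yields $\hat\lambda\le\lambda/d+O(\gamma)$, and the additive $O(\gamma)$ is fatal since no relation between $\gamma$ and $\lambda/d$ is assumed. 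The correct derivation avoids any eigenvector control: by Courant--Fischer applied to the codimension-one subspace $(D^{-1/2}u_1)^{\perp}$, for any unit $x\perp D^{-1/2}u_1$ the substitution $y=D^{-1/2}x$ satisfies $y\perp u_1$ and $\|y\|_2^2\le \|x\|_2^2/((1-\gamma)d)$, whence $|x^T\hat Ax|=|y^TAy|\le\lambda\|y\|_2^2\le\lambda/((1-\gamma)d)$, giving $\max_{i\ge2}|\hat\lambda_i|\le\lambda/((1-\gamma)d)$. With that repair the fallback closes; without it, both routes in your proposal are incomplete.
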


The next theorem will help us maintain the expander property for a random subset when constructing the spreadness measure.
\begin{theorem}[\cite{ferber2024hamiltonicity}, random subgraphs of spectral expanders]\label{thm:random_subgraphs_of_spectral_expanders}
%Let $d\ge{Cn^{5/6}\log^{1/2}n},\lambda>0,\sigma\in[n^{1/3}/d,1)$, and $\lambda/d\le\gamma\le 1/200$. 
Let \(\gamma\in(0,1/200]\) be a constant. 
There exists an absolute constant $C=C_{\ref{thm:random_subgraphs_of_spectral_expanders}}>0$ such that the following holds for all sufficiently large $n$. 
Let \(d,\lambda>0\), let \(\sigma\in [1/n,1)\), and let $G$ be an $(n,(1\pm\gamma)d,\lambda)$-graph. 
Let $X\subseteq V(G)$ with $|X|=\sigma n$ be a subset chosen uniformly at random, and let $H:=G[X]$ be the subgraph of $G$ induced by $X$. 
Assume that \[\sigma d>C\gamma^{-2}\log n\text{ and } \sigma\lambda>\sqrt{\sigma d\log n}.\]
Then with probability at least $1-n^{-1/6}$, $H$ is a $(\sigma n,(1\pm2\gamma)\sigma d,6\sigma\lambda)$-graph.
\end{theorem}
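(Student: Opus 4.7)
The plan is to handle the degree-regularity and the spectral-gap conclusions separately, via quite different tools.

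For the degree bound, fix any $v \in V(G)$ and condition on $v \in X$. Then $d_H(v) = |N_G(v)\cap (X\setminus\{v\})|$ is hypergeometrically distributed with mean $(|X|-1)d_G(v)/(n-1) = (1\pm O(1/n))\sigma d_G(v)$, which by almost-regularity of $G$ lies in the interval $(1\pm (\gamma+o(1)))\sigma d$. Applying the hypergeometric version of Lemma~\ref{lem:chernoff_bound} with relative deviation $\gamma/4$, the failure probability is at most $2\exp(-c\gamma^2 \sigma d) \le n^{-10}$ under the hypothesis $\sigma d > C\gamma^{-2}\log n$ for a sufficiently large absolute constant $C$. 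A union bound over the $n$ vertices yields $d_H(v) \in (1\pm 2\gamma)\sigma d$ simultaneously for every $v \in X$ with probability $1 - o(n^{-1})$.

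For the spectral bound, let $\mu_1 \in (1\pm\gamma)d$ and $v_1$ denote the Perron eigenvalue and unit eigenvector of $A_G$, and set $E := A_G - \mu_1 v_1 v_1^T$, so that $\|E\| \le \lambda$. Let $P$ be the $n\times n$ coordinate projection onto $X$. Then $A_H$ (viewed as an $n\times n$ matrix supported on $X\times X$) equals $PA_G P = \mu_1 (Pv_1)(Pv_1)^T + PEP$. The rank-one term has single nonzero eigenvalue $\mu_1 \|Pv_1\|^2 \approx \sigma d$ (using $v_1 \approx \mathbf{1}_n/\sqrt{n}$ together with a hypergeometric concentration $\|Pv_1\|^2 \approx \sigma$), corresponding precisely to the top eigenvalue of $A_H$. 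All remaining eigenvalues of $A_H$ are therefore controlled by $\|PEP\|$, so it suffices to show $\|PEP\| \le 6\sigma\lambda$ with probability at least $1 - n^{-1/6}$.

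This reduces the theorem to a concentration bound for the spectral norm of a random principal submatrix: for a symmetric $n\times n$ matrix $M$ with $\|M\| \le \lambda$ and row $\ell_2$-norms of size $O(\sqrt{d})$ (which holds for $E$ because each row of $A_G - (d/n)J$ has $d$ entries of magnitude $\approx 1-d/n$ and $n-d$ entries of magnitude $\approx d/n$, totaling $O(\sqrt{d})$, and the correction from $v_1 \ne \mathbf{1}_n/\sqrt{n}$ is lower order), a uniformly random principal submatrix of size $\sigma n$ satisfies $\|PMP\| \le \sigma\lambda + C'\sqrt{\sigma d\log n}$ with probability $1 - n^{-\Omega(1)}$. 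Combining with the hypothesis $\sigma\lambda > \sqrt{\sigma d\log n}$ forces $\|PEP\| \le 6\sigma\lambda$, completing the argument.

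The main obstacle is this submatrix concentration statement. Cauchy interlacing alone gives only $\lambda_2(A_H) \le \lambda$, missing the decisive factor of $\sigma$, which can only be extracted by exploiting the randomness of $X$. I would prove the submatrix bound by coupling the uniform $|X|=\sigma n$ model to the Bernoulli model (each vertex kept independently with probability $\sigma$) and applying the matrix trace method: expand $\E[\operatorname{tr}((PMP)^{2k})]$ for $k = \lceil \log n\rceil$, group the closed walks of length $2k$ in $M$ by their vertex support size $s$ so as to pay a factor $\sigma^s$, and use $\|M\|\le\lambda$ together with the row-norm bound to control the contribution of each support class. The perturbation from the almost-regular case (where $v_1$ is only approximately $\mathbf{1}_n/\sqrt{n}$) is then handled by carrying an $O(\gamma d)$-correction term through the estimates, whose smallness relative to $\sigma\lambda$ makes it a routine addition.
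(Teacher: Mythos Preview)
The paper does not prove this theorem: it is quoted verbatim from Ferber, Han, Mao, and Vershynin~\cite{ferber2024hamiltonicity} and used as a black box (in the proof of Lemma~\ref{lem:vortex_lemma}). There is therefore no ``paper's own proof'' to compare your proposal against.

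That said, your sketch is broadly in line with how such a result is actually proved in the source. The degree part is exactly the hypergeometric Chernoff argument you give. The spectral part in~\cite{ferber2024hamiltonicity} also hinges on a concentration bound for the operator norm of a random principal submatrix of the centered adjacency matrix, and the hypothesis $\sigma\lambda>\sqrt{\sigma d\log n}$ is there precisely to absorb the $\sqrt{\sigma d\log n}$-type fluctuation term you identify. Two points in your write-up would need more care if you were to turn this into a proof. First, the passage ``all remaining eigenvalues of $A_H$ are therefore controlled by $\|PEP\|$'' is correct but deserves one line of justification via Weyl's inequality (rank-one perturbation), since the Perron direction $Pv_1$ of the rank-one piece is not exactly the top eigenvector of $A_H$. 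Second, the claim that the deviation $v_1-\mathbf{1}_n/\sqrt{n}$ contributes only lower-order terms is not automatic for an $(n,(1\pm\gamma)d,\lambda)$-graph: one needs a quantitative bound of the form $\|v_1-\mathbf{1}_n/\sqrt{n}\|=O(\gamma d/(\mu_1-\lambda))$, and to check that the resulting correction to both the row norms of $E$ and the rank-one remainder really is dominated by $\sigma\lambda$ under the stated hypotheses. The trace-method outline you give for the submatrix bound is one viable route; the original paper uses closely related matrix-concentration machinery.
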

Recently, Morris~\cite{morris2025clique} proved the following result about the existence of clique factors in pseudorandom graphs. This will play an essential role in our proof of Theorem~\ref{thm:main_thm_triangle_factor}.

\begin{theorem}[\cite{morris2025clique}]\label{thm:bijumbled_graph}
    For every \(3\le r\in \mathbb{N}\) and \(c>0\) there exists an \(\eta>0\) such that any \(n\)-vertex \(\l(p,\beta\r)\)-bijumbled graph \(G\) with \(n\in r\mathbb{N}\), \(p>0\), \(\delta(G)\ge cpn\) and \(\beta\le \eta p^{r-1} n\), contains a \(K_r\)-factor.
\end{theorem}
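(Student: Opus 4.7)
The plan is to use the absorbing method of Rödl--Ruciński--Szemerédi, adapted to the bijumbled setting where the role of density is played by the bijumbledness parameter. The first step is a counting lemma: under $\beta\le \eta p^{r-1}n$, any clique of size $t\le r$ in $G$ has a common neighborhood of size $(1\pm o(1))p^t n$. This is proved by induction on $t$, applying the bijumbledness bound iteratively: at step $t$, we study $G[N(K_{t-1})]$ which has density roughly $p$ and inherits bijumbledness $\beta$, and we need $\beta\sqrt{|X||Y|}\ll p|X||Y|$ on sets of size $\Theta(p^{t-1}n)$. The condition $\beta\le\eta p^{r-1}n$ is calibrated so that this remains valid all the way up to $t=r$, yielding $\Theta(p^{\binom{r}{2}}n^r)$ copies of $K_r$ distributed uniformly.

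Next I would construct the absorbing set. For each vertex $v$, define an \emph{absorbing gadget} to consist of a pair of vertex-disjoint $K_r$'s $F_1,F_2$ and a distinguished vertex $u\in F_1$ such that $v$ is adjacent to every vertex of $F_1\setminus\{u\}$ and $u$ is adjacent to every vertex of $F_2$; then $\{v\}\cup(F_1\setminus\{u\})$ and $F_2$ form a $K_r$-factor of the $(2r)$-vertex gadget both with and without the vertex $v$ playing its role. The counting lemma shows that each $v$ lies in $\Omega(p^{C_r}n^{2r-1})$ such gadgets for a suitable constant $C_r$. A random subset selection and a Chernoff argument give an absorbing set $A$ of size $o(n)$ so that every $v\notin A$ has $\Omega(p^{C_r}|A|^{2r-1}/n^{2r-1})$ gadgets inside $A$. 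Deleting overlaps leaves a set that can absorb any sufficiently small leftover.

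For the almost-$K_r$-factor on $V(G)\setminus A$, I would apply a Rödl-nibble argument. The uniform clique counts from the counting lemma guarantee that a random collection of disjoint $K_r$'s removes vertices approximately uniformly, so after $O(\log n)$ iterations the uncovered set has size $o(n)$, small enough to be handled by the absorber. The main obstacle is ensuring the tight balance of parameters: the bijumbledness bound $\beta\le\eta p^{r-1}n$ is exactly at the threshold for the counting lemma, so after removing cliques in the nibble we must verify that the induced subgraph on uncovered vertices retains just enough bijumbledness for another round. A careful two-parameter tracking, running the nibble only partway and then pivoting to direct greedy completion once the uncovered set becomes comparable in size to the absorber's capacity, should close the argument. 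The delicate interface between the error terms of the nibble and the quantity of absorbing gadgets surviving in $A$ is where the technical heart of the proof will sit.
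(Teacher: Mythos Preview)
This statement is not proved in the paper at all: it is Theorem~\ref{thm:bijumbled_graph}, quoted from Morris~\cite{morris2025clique} and used as a black box in Section~\ref{sec:triangle_factor} to handle the final set $V_N\setminus V(M_N)$ at the end of the iterative absorption. There is therefore no ``paper's own proof'' to compare your proposal against.

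As for your sketch on its own merits: the overall architecture (counting lemma $\to$ absorbers $\to$ almost-cover $\to$ absorption) is the right shape for results of this type, and indeed earlier clique-factor results in bijumbled graphs (e.g.\ Han--Kohayakawa--Person and Nenadov) follow roughly this pattern. However, the specific absorbing gadget you describe is too weak for the full range $\beta\le \eta p^{r-1}n$. Counting your gadget requires control over common neighborhoods of sets of size up to $r$ (since $v$ must be joined to all of $F_1\setminus\{u\}$ and $u$ to all of $F_2$), and your inductive counting lemma only reaches $t\le r-1$ under $\beta\le\eta p^{r-1}n$: at step $t$ you need $\beta\ll p\cdot p^{t-1}n=p^t n$, which fails for $t=r$. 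Earlier proofs that used this style of absorber required the stronger assumption $\beta\le \eta p^r n$ or worse. To get down to $\beta\le\eta p^{r-1}n$ one needs either a more delicate absorber (Nenadov's cascading/template absorbers) or the approach in Morris's paper, which is not via classical absorption at all. So your plan, as stated, would not reach the claimed bound.
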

\begin{comment}
    Since we will consider the subgraph of an $(n,d,\lambda)$-graph, we need the interlace theorem to help us characterize the properties of its induced subgraphs.
\begin{theorem}[Cauchy's interlace theorem]\label{thm:interlace_thoerem}
Let $A$ be a symmetric $n\times n$ matrix, and $B$ be a $m\times m$ principal submatrix of $A$, for some $m<n$. If the eigenvalues of $A$ are $\lambda_1\ge\ldots\lambda_n$, and the eigenvalues of $B$ are $\mu_1\ge\ldots\mu_m$, then for all $1\le i\le m$, 
\[
\lambda_i\ge\mu_i\ge\lambda_{i+n-m}.
\]
\end{theorem}
\end{comment}

\section{Robust expander}\label{robustexpan}
As discussed in the introduction, the proof of \Cref{thm:main_thm_bijumbled} reduces to verifying the robustness of the expander property. 
In particular, it suffices to show that the random sparsification \(G_p\) of a \((q,\beta)\)-bijumbled graph \(G\) is a \(C\)-expander with high probability, as stated in the following theorem.
Recall that a graph $G$ is called a $C$-expander if,
\begin{enumerate}[label=(\roman*)]
    \item $|N(X)|\ge C|X|$ for every $X\subseteq V(G)$ with $|X|<n/C$;
    \item\label{the-jointness} there is an edge between every two disjoint vertex sets of size at least $n/2C$.
\end{enumerate}

\begin{theorem}\label{thm:robust_expander}
    Let \(1/n\ll\varepsilon\ll1/C\ll \delta\ll \gamma, \alpha \le 1\). 
    Let $G$ be an \(n\)-vertex $(q,\beta)$-bijumbled graph with $\beta\le \eps qn$ and $\delta(G)\ge \alpha qn$. 
    If $p(n)=\l(1+\gamma\r)\frac{\log n}{\alpha qn}$, then $G_p$ is a \(C\)-expander with high probability.
\end{theorem}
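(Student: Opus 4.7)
My plan is to verify the two defining properties of a $C$-expander separately, in each case by union bounding over the relevant family of sets and applying concentration for functions of the independent edges of $G_p$.

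For the joining condition, given any disjoint $A, B \subseteq V(G)$ with $|A|, |B| \ge n/(2C)$, the bijumbledness hypothesis yields $e_G(A, B) \ge q|A||B| - \eps qn\sqrt{|A||B|} \ge q|A||B|/2 \ge qn^2/(8C^2)$ since $\eps \ll 1/C$. Hence the probability that $G_p$ contains no edge between $A$ and $B$ is at most $(1-p)^{e_G(A, B)} \le \exp(-pqn^2/(8C^2)) = n^{-\Omega(n/(\alpha C^2))}$, easily beating the $4^n$ union bound over ordered pairs of such sets.

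For the expansion condition, I want to show that for each $|X| = k$ with $1 \le k \le n/(2C)$, $|N_{G_p}(X)| \ge Ck$ holds with high probability. The key structural observation is that $|N_{G_p}(X)|$ is a sum of independent Bernoullis, one per $v \in V \setminus X$, with parameter $1 - (1-p)^{d_G(v, X)}$, since the potential edges from $X$ to distinct $v$ are disjoint. I split by the size of $k$. In the small regime, roughly $k \le \alpha qn/\log n$, $pk$ is small and $1 - (1-p)^{d_G(v, X)} \approx p d_G(v, X)$, so $\mu_X := \E[|N_{G_p}(X)|] \ge (1-o(1)) p\, e_G(X, V \setminus X) \ge (1-o(1))(1+\gamma) k \log n$, using the minimum-degree bound $e_G(X, V\setminus X) \ge k\alpha qn - O(k^2)$ together with a bijumbled bound on $e(X)$. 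A sharp Chernoff bound of the form $\Pr[|N_{G_p}(X)| \le Ck] \le (e\mu_X/(Ck))^{Ck}\, e^{-\mu_X}$ then gives $\Pr \le n^{-(1+\gamma)(1-o(1))k}$, which beats the $\binom{n}{k} \le n^k$ union bound by $n^{-\gamma(1-o(1))k}$ uniformly. In the large regime, roughly $k \ge \eps^{1/2} n$, bijumbledness gives $|N_G(X)| \ge n - k - \eps^2 n^2/k$ close to $n$ and moreover that most $v \in V \setminus X$ have $d_G(v, X) \gg 1/p$, forcing $\mu_X \ge (1 - 1/C) n$; a Chernoff bound yields $\Pr \le e^{-\Omega(n)}$, which defeats the $\binom{n}{k} \le 2^{nH(k/n)}$ union bound provided $C$ is chosen sufficiently large.

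The main technical obstacle is the intermediate range of $k$, where neither estimate on $\mu_X$ is individually sharp enough. To bridge this, I would carefully combine bijumbled bounds on $|N_G(X)|$ and on the codegree sum $\sum_v d_G(v, X)^2$ to simultaneously control $\mu_X$ and the error in the linearization $1 - (1-p)^d \approx \min(pd, 1)$; the hierarchy $\eps \ll 1/C \ll \alpha$ is exactly what allows these error terms to be absorbed uniformly across the whole range of $k$.
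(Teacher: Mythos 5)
Your treatment of the joining condition is fine and matches the paper's (the paper shows $e_{G_p}(A_1,A_2)>0$ via Chernoff rather than the direct $(1-p)^{e_G(A,B)}$ bound, but both work). For the expansion condition you take a genuinely different route: you view $|N_{G_p}(X)|$ as a sum of independent Bernoullis indexed by $v\in V\setminus X$ with parameters $1-(1-p)^{d_G(v,X)}$ and try to apply Chernoff directly to it, whereas the paper never lower-bounds $\E|N_{G_p}(X)|$ set by set. Instead it first establishes $\delta(G_p)\ge\delta\log n$, then for medium sets shows that every $Y$ with $|Y|\le (C+1)n/\log n$ spans at most $\tfrac{\delta\log n}{2C}|Y|$ edges of $G_p$ (so $X\cup N_{G_p}(X)$ cannot be small), and for large sets uses the containment trick: if $N_{G_p}(X)\subseteq Y$ with $|Y|=C|X|$, then $e_{G_p}(X,X\cup Y)$ equals $\sum_{v\in X}d_{G_p}(v)\ge\delta\log n\,|X|$, which is an upper-tail event of a variable with much smaller mean. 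Your small regime (with the cutoff placed at $k\le c/p$ for small $c$, so that the linearization $1-(1-p)^{d}\ge(1-o(1))pd$ is actually valid) and your large regime (where $\binom{n}{k}\le 2^{nH(1/2C)}$ is subexponential for large $C$) are plausibly correct.

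The genuine gap is the intermediate regime, which you explicitly leave unresolved, and which in the sparsest case $qn=\Theta(\log n)$ (so $1/p=\Theta(1)$) comprises essentially all set sizes. Moreover, the repair you sketch --- controlling the linearization error via a bijumbled bound on $\sum_v d_G(v,X)^2$ --- does not work as stated. With $\beta=\eps qn$, bijumbledness only yields $|\{v:d_G(v,X)\ge j\}|\le 4\beta^2k/j^2$ for $j\ge 2qk$, so summing over dyadic scales gives $\sum_v d_G(v,X)^2=O(\eps^2q^2n^2k\log(1/q))$ and nothing better; then $p^2\sum_v d_G(v,X)^2$ can exceed $p\sum_v d_G(v,X)\approx(1+\gamma)k\log n$ by a factor of order $\eps^2\log^2 n/\alpha^2\to\infty$, so the second-order term swamps the main term and no lower bound on $\mu_X$ follows. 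The underlying issue is that bijumbledness at this scale permits the edge mass from $X$ to concentrate on relatively few high-degree vertices; those vertices are actually harmless (each still contributes $\Omega(1)$ to $\mu_X$), but exploiting that requires splitting the vertices by whether $pd_G(v,X)$ is large or small and running a separate counting/expansion argument for the high-degree part --- in effect reconstructing something like the paper's medium- and large-set arguments. As written, the proof is incomplete at exactly the step you flag as the main obstacle.
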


\iffalse
\begin{theorem}
    Let $n\in \mathbb{N}$ be an integer, $\alpha\in [0,1]$. Let $G$ be a $(q,\beta)$-jumbled graph on the vertex set $[n]$ with $\beta=o(qn)$ and $\delta(G)\ge \alpha qn$. Then the following holds for all sufficiently large $n$.
    \begin{enumerate}
        \item If $p(n)=\l(1+o(1)\r)\frac{\log n}{qn}$, then $G$ contains a Hamiltonian cycle with high probability
    \end{enumerate}
\end{theorem}
\fi

\begin{proof}
    %Let $0\le 1/ n\ll\varepsilon\ll1/C\ll \alpha \le 1$. Suppose that $\beta\le \varepsilon qn$. 
    Let $H:=G_p$ with $p=(1+\gamma)\log n/(\alpha qn)$. 
    We first show that with probability $1-o(1)$, every vertex has degree at least $\delta \log n$.  Let $v\in V(G)$, the probability that $\deg_H(v)< \delta\log n$ is at most 
    \begin{align*}
        \mathbb{P}[\deg_H(v)<\delta \log n]&\le \sum_{i=0}^{\delta\log n}\binom{\deg_G(v)}{i}p^i(1-p)^{\deg_G(v)-i}\\
        &\le \sum_{i=1}^{\delta \log n}\left(\frac{e \deg_G(v)}{i}\right)^i p^i(1-p)^{\deg_G(v)-i}+(1-p)^{\deg_G(v)}\\
        &\le \log n\l(\frac{2e}{\delta}\r)^{\delta\log n}\exp(-(1+\frac{\gamma}{2})\log n)=o(n^{-1-\frac{\gamma}{4}}).
    \end{align*}
    By the union bound, there exists a vertex with degree at most $\delta \log n$ in $H$ with probability at most $o(1)$.

    %By Chernoff’s inequality and the minimum degree condition of \(G\), with probability $1-o(1)$, every vertex $v\in V(G)$ satisfies \[\deg_H(v)\ge (1-\varepsilon)p\cdot \alpha qn\ge (1-\varepsilon)\alpha \log n.\] 
    Next, we will show that $H$ is a $C$-expander with high probability.

    We verify the second condition first. In fact, we verify a stronger condition, which will help us prove the expansion condition later. For any two sets $A_1,A_2$ with $|A_1|=|A_2|=\frac{n}{2C^2}$, by the bijumbledness,
    \[e_G(A_1,A_2)\ge q|A_1||A_2|-\beta\sqrt{|A_1||A_2|}\ge \frac{qn^2}{8C^4}.
    \]
    Since each edge is included in $H$ with probability $p$ independently, we have that $\mathbb{E}(e_H(A_1,A_2))=pe_G(A_1,A_2)\ge \frac{n\log n}{8\alpha C^4}$. So by Lemma~\ref{lem:chernoff_bound}, we have
    \begin{align*}
        \mathbb{P}\left[e_H(A_1,A_2)\le (1-\varepsilon)pe_G(A_1,A_2)\right]\le \exp\left(-\frac{\eps^2n\log n}{16\alpha C^4} \right).
    \end{align*}
    Thus, the probability that the second condition fails is at most 
    \[\binom{n}{\frac{n}{2C^2}}\binom{n}{\frac{n}{2C^2}}\exp\left(-\frac{\eps^2 n\log n}{16\alpha C^4}\right)=o(1).\]

    To see the first condition, we divide the proof into four cases. By the discussion above, we can assume that in $H$, every vertex has degree at least $\delta \log n$.

    For $|X|\le \tfrac{\delta \log n}{2C}$, choose an arbitrary vertex $v\in X$. Observe that $|N_H(X)|\ge d_H(v,V(G)\setminus X)\ge C|X|$, as desired. 

    For all $\tfrac{\delta\log n}{2C}\le |X|\le \tfrac{n}{\log n}$, we want to show that $|N_H(X)|\ge C|X|$ with high probability. It suffices to prove that $e_H(Y)\le \tfrac{ \delta\log n}{2C}|Y|$ with high probability for all $\tfrac{\delta \log n}{2C}\le |Y|\le \tfrac{(C+1)n}{ \log n}$. Indeed, since $e_H(X\cup N_H(X))\ge \delta \log n|X|-\frac{\delta\log n|X|}{2C}$, which immediately implies that $|N_H(X)|\ge C|X|$ for every $\tfrac{\delta \log n}{2C}\le |X|\le \frac{n}{\log n}$. 
    By the bijumbledness, we have that 
    \begin{align}
        e_G(Y)\le q|Y|^2+\beta |Y|\le 2\varepsilon qn|Y|.
    \end{align}
    Thus, by the property of the Bernoulli distribution, we have 
    \begin{align*}
   &\binom{n}{|Y|}\mathbb{P}\left[\text{Bin}\left(e_G(Y),(1+\gamma)\frac{\log n}{\alpha qn}\right) > \frac{\delta\log n}{2C}|Y|\right]\\\le&\binom{n}{|Y|} \mathbb{P}\left[\text{Bin}\left(2\varepsilon qn|Y|,(1+\gamma)\frac{\log n}{\alpha qn}\right)> \frac{ \delta\log n}{2C}|Y|\right]\\
    \le& \l(\frac{en}{|Y|}\r)^{|Y|}\cdot\l(\frac{2e\varepsilon qn|Y|\tfrac{2\log n}{\alpha qn}}{\tfrac{ \delta\log n}{2C}|Y|}\r)^{\frac{ \delta\log n}{2C}|Y|}=\l(\frac{en}{|Y|}\cdot (8e\varepsilon C/\delta\alpha)^{\frac{\delta\log n}{2C}}\r)^{|Y|}
    \le \l(\frac{1}{2}\r)^{|Y|}.
    \end{align*}
    The last inequality holds as $\varepsilon\ll 1/C\ll\delta\ll  \alpha.$
    Sum over the size of $Y$ from $\frac{\delta \log n}{2C}$ to $\frac{(C+1)n}{ \log n}$, we see that $e_H(Y)\le \frac{\log n}{2C}|Y|$ with high probability.
    \begin{comment}
    For $\frac{\alpha\log n}{2C}\le |X|\le \frac{2Cn}{\alpha \log n}$, we show that $e_H(X)\le \frac{\alpha \log n}{2C}|X|$. Since $e_H(Y\cup N_H(Y))\ge (1-\varepsilon^5)\alpha \log n|Y|$, this implies that $|N_H(Y)|\ge C|Y|$ for $\frac{\alpha\log n}{2C}\le |Y|\le \frac{n}{\log n}$. To show that $e_H(X)\le \frac{\log n}{2C}|X|$. By bijumbledness, we see that 
    \begin{align}
        e_G(X)\le q|X|^2+\beta |X|\le \varepsilon^{10}qn|X|.
    \end{align}
    Thus, by the property of the Bernoulli distribution, we have
\begin{align*}
    \binom{n}{|X|}\mathbb{P}[\text{Bin}(e_G(X),(1+o(1))\frac{\log n}{qn}) > \frac{\alpha\log n}{2C}|X|] &\le\binom{n}{|X|} \mathbb{P}[\text{Bin}(\varepsilon^{10}qn|X|,(1+o(1))\frac{\log n}{qn})> \frac{\alpha \log n}{2C}|X|]\\
    &\le \l(\frac{en}{|X|}\r)^{|X|}\cdot\l(\frac{e\tfrac{\log n}{qn}\varepsilon^{10}qn|X|}{\tfrac{\alpha \log n}{2C}|X|}\r)^{\frac{\alpha \log n}{2C}|X|}\\
    &=\l(\frac{en}{|X|}\cdot (2C\varepsilon^{10} e/\alpha)^{\frac{\alpha\log n}{2C}}\r)^{|X|}
    \le \l(\frac{1}{2}\r)^{|X|}.
\end{align*}
Sum over the size of $X$ from $\frac{\alpha\log n}{2C}$ to $\frac{2Cn}{\alpha \log n}$, we see that $e_H(X)\le \frac{\alpha\log n}{2C}|X|$ with high probability.
   \end{comment}

 For $\frac{n}{\log n}\le |X|\le \frac{n}{2C^2}$, for the contrary, assume $|N_H(X)|< C|X|$. Write \(s:=|X|\). Let $Y\subseteq V(G)\setminus X$ with size $Cs$ such that $N_H(X)\subseteq Y$. Then, by the bijumbledness and $\varepsilon\ll 1/C$, we have that 
\begin{align*}
    e_G(X,X\cup Y)&\le q|X||X\cup Y|+\beta\sqrt{|X||X\cup Y|}
    \\&\le q(C+1)s^2+\beta\sqrt{C+1}s\\
    &\le \l(\frac{1}{C}qn+\beta\sqrt{C}\r)s\le \l(\frac{1}{C}+\varepsilon\sqrt{C}\r)qns\le \frac{3}{2C}qns.
\end{align*}
So $\mathbb{E}[e_H(X,X\cup Y)]=pe_G(X,X\cup Y)\le \frac{3\log n}{2\alpha C} \cdot s $. On the other hand, by our assumption and $N_H(X)\subseteq Y$, we have that 
 \begin{align*}
    e_H(X,X\cup Y)=e_H(X,V(G))=\sum_{v\in X}d_H(v)
    \ge \delta\log n \cdot s\ge \frac{2}{\alpha C}\log n\cdot s.
\end{align*}
The last inequlaity holds as $\frac{1}{C}\ll \delta\ll \alpha$. 
%since \(\eps\ll 1/C\ll \alpha\) and \(\beta\le \eps qn\)
By Lemma~\ref{lem:chernoff_bound}, we have 
\begin{align*}
    \mathbb{P}\left[N_H(X)\subseteq Y\right]\le \mathbb{P}\left[e_H(X,X\cup Y)\ge \frac{2}{\alpha C}\log n\cdot s\right]\le\exp\left(-\frac{ s\log n }{1000\alpha C}\right).
\end{align*}
By summing over all such $X$ and $Y$, the probability that there exists a vertex set $X$ with $|N_H(X)|\le Cs$ is at most

 \begin{align*}
    \sum_{s=\frac{n}{\log n}}^{\frac{n}{2C^2}}\binom{n}{Cs}\binom{n}{s}\exp(-\frac{  s\log n }{1000C\alpha})
    &\le \sum_{s=\frac{n}{\log n}}^{\frac{n}{2C^2}}\l(\frac{en}{Cs}\r)^{Cs}\cdot \l(\frac{en}{s}\r)^s\exp(-\frac{s}{C}\log n )\\
    &\le \sum_{s=\frac{n}{\log n}}^{\frac{n}{2C^2}}\exp(2Cs\log\log n-\frac{s}{C}\log n)=o(1/n).
\end{align*}
Hence, with high probability that for any $|X|\le \frac{n}{2C^2}$, $|N_H(X)|\ge C|X|$ holds.

Finally, for $\frac{n}{2C^2}\le |X|\le \frac{n}{2C}$, by the proof of property~\ref{the-jointness}, we obtain that $|N_H(X)|\ge n-|X|-\frac{n}{2C^2}\ge C|X|$. Therefore, $H$ is a $C$-expander with high probability.
\end{proof}

\section{Perfect Matchings and Hamiltonian cycles from $C$-expander}\label{perfecmatham}
In this section, we will prove \Cref{thm:main_thm_ndlambda} and \ref{thm:main_thm_bijumbled}. We begin with the following lemma.

\begin{lemma}\label{lem:tutte–expander_lemma}
    If $G$ is a $C$-expander graph for some $C\ge 3$ with $|V(G)|$ even, then $G$ contains a perfect matching. 
\end{lemma}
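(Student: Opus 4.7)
The plan is to verify \emph{Tutte's condition}: $o(G-S) \le |S|$ for every $S \subseteq V(G)$, where $o(\cdot)$ denotes the number of odd components. The trivial bound $o(G-S) \le n - |S|$ settles $|S| \ge n/2$ immediately. For $|S| \in \{0, 1\}$, I first observe that $G$ itself is connected: any proper component $A$ with $|A| \le n/2$ would have $|N(A)| \ge 1$ either by property (i) (if $|A| < n/C$) or by property (ii) (if $n/C \le |A| \le n/2$, since then both $A$ and $V(G)\setminus A$ have size $\ge n/(2C)$), contradicting that $A$ is a component. Using that $n$ is even, this gives $o(G) = 0$ and $o(G - \{v\}) \le 1$ for any single vertex $v$, so the nontrivial range is $2 \le |S| < n/2$.

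In this range, property (ii) forces at most one component of $G - S$ to have size $\ge n/(2C)$, since two such disjoint components would be forced to share an edge, contradicting their being separate components of $G-S$. Call this possibly exceptional component $L$, and let $U_{\mathrm{odd}}$ denote the union of all \emph{small} odd components (each of size $< n/(2C)$). Since each small component contributes at least one vertex, $o(G - S) \le 1 + |U_{\mathrm{odd}}|$, so it suffices to prove $|U_{\mathrm{odd}}| \le |S|/C$.

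I would argue this in two subcases. If $|U_{\mathrm{odd}}| < n/C$, property (i) applied to $U_{\mathrm{odd}}$ yields $|N(U_{\mathrm{odd}})| \ge C |U_{\mathrm{odd}}|$; since $U_{\mathrm{odd}}$ is a union of components of $G - S$, we have $N(U_{\mathrm{odd}}) \subseteq S$, giving $|U_{\mathrm{odd}}| \le |S|/C$ as required. If instead $|U_{\mathrm{odd}}| \ge n/C$, property (i) does not directly apply, so I would carve out a well-sized sub-union. Greedily add small odd components into a set $U'$ one at a time until $|U'| \ge n/(2C)$; because each added component has size strictly below $n/(2C)$, the stopping size satisfies $|U'| < n/C$, so $|U'| \in [n/(2C), n/C)$. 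Applying property (i) to $U'$ gives $|N(U')| \ge C \cdot n/(2C) = n/2$, contradicting $N(U') \subseteq S$ and $|S| < n/2$. Thus this subcase is vacuous, and in all cases $o(G-S) \le 1 + |S|/C \le 1 + |S|/3 \le |S|$, using $C \ge 3$ and $|S| \ge 2$.

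The main obstacle I anticipate is precisely the transition between the two expander axioms: property (i) only bites when $|U_{\mathrm{odd}}| < n/C$, while a direct two-set splitting via property (ii) would need both halves of $U_{\mathrm{odd}}$ to exceed $n/(2C)$, which is not automatic in the regime $n/C \le |U_{\mathrm{odd}}| < 3n/(2C)$. The observation that resolves this is that every small component has size strictly less than $n/(2C)$, so a greedy sub-union of $U_{\mathrm{odd}}$ is guaranteed to land in the ``usable'' window $[n/(2C), n/C)$; this single trick bridges the two regimes and rules out the awkward middle case.
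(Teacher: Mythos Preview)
Your proof is correct and follows the same Tutte-condition route as the paper. Two small remarks: the claim $o(G-\{v\})\le1$ does not follow from connectedness of $G$ and parity alone, but your general argument already handles $|S|=1$ (since $|U_{\mathrm{odd}}|\le|S|/C<1$ forces $|U_{\mathrm{odd}}|=0$, hence $o(G-S)\le1$); and your greedy step---building a single sub-union $U'$ with $|U'|\in[n/(2C),\,n/C)$ to force $|N(U')|\ge n/2>|S|$---is a mild variant of the paper's, which instead packs two maximal unions $F_0,F_1$ each of size below $n/(2C)$ and bounds $\bigl|\bigcup_i A_i\bigr|\le 3|F_0|$.
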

\begin{proof}

    We will verify that \(G\) satisfies Tutte's condition, i.e., for every subset \(S\subseteq V(G)\), the number of odd connected components of \(G-S\) does not exceed \(\card{S}\). 
    Let \(A_1,\dots, A_m\) be all the maximal odd components of \(G-S\), listed in increasing order of size. 
    Since \(G\) is a \(C\)-expander, it is connected. 
    Assume that \(S\) is nonempty and \(m\ge 2\) (It is trivial for $m=1$). 
    By the maximality of each component, we have \(N_G(A_i)\subseteq S\) for all \(i\in [m]\). 

    We first consider the case \(|A_m|\ge n/2C\). 
    Note that we must have \(|\bigcup_{i\in [m-1]}A_i|<n/2C\), which implies \[m\le C(m-1)\le C|\cup_{i\in [m-1]}A_i|\le |\cup_{i\in [m-1]}N_G(A_i)|\le |S|.\] 
    Next, we consider the case \(|A_m|< n/2C\). 
    Let \(F_0\) be the union of a subfamily of the sets \(A_1,\dots, A_m\) such that $|F_0|$ is the largest under the restriction that \(|F_0|<n/2C\) (that is, adding to $F_0$ any set $A_j$ not contained in $F_0$ results a set of size larger than $n/2C$).
    Then let $F_1$ be found under the same rule in the remaining sets of \(A_1,\dots, A_m\).
    We claim that \(|\bigcup_{i\in[m]}A_i|\le 3|F_0|\). 
    %Indeed, if we repeat the same procedure on the remaining sets, obtaining \(F_1\) (which may be empty), then at most one set remains, denoted by \(A_{i_0}\). 
    Note that there is at most one set, denoted by \(A_{i_0}\), not in $F_0$ or $F_1$ -- if two such sets remain, say \(A_{i_0}\) and \(A_{i_1}\), then by the maximality of $F_0$ and $F_1$, we have \(|F_0\cup A_{i_0}|\ge n/2C\) and \(|F_1\cup A_{i_1}|\ge n/2C\), which implies there is an edge between them, a contradiction. 
    Since \(| A_{i_0}|\le|F_1|\le|F_0|\), it follows that \(|\bigcup_{i\in[m]}A_i|\le 3|F_0|\).
    %We claim that \(|\bigcup_{i\in[m]}A_i|\le 3|F_0|\). Indeed, suppose we can iteratively repeat the same procedure on the remaining sets three times, obtaining \(F_1\), \(F_2\), \(F_3\). By the maximality of each \(F_i\), we have \(|F_0\cup F_1|\ge n/2C\) and \(|F_2\cup F_3|\ge n/2C\), which implies the existence of an edge between \(F_0\cup F_1\) and \(F_2\cup F_3\), contradicting \(N_G(F_0\cup F_1)\subseteq S\). Hence, the procedure can be repeated at most twice, yielding \(F_1\) and \(F_2\). Consequently, \(\bigcup_{i\in[m]}A_i\subseteq \bigcup_{j=0}^{2}F_j\), and since \(|F_2|\le|F_1|\le|F_0|\), it follows that \(|\bigcup_{i\in[m]}A_i|\le 3|F_0|\). 
    Therefore, by the expansion property, we have 
    \[
    m\le|\cup_{i\in[m]}A_i|\le 3|F_0|\le |N_G(F_0)|\le |S|. \qedhere
    \]
    %Let $M$ be the maximal matching of $G$. Suppose that $V(M)\neq V(G)$ and $M=\{a_1b_1,\dots,a_{m}b_m\}$. Let $x,y\in V(G)\setminus V(M)$. Observe that there is no edge between $x,y$ and $V(M)\setminus V(G)$. Then we define $End(i,x)$ and $End(i,y)$ by following iterative process:
    %Let $End(0,x)=\{x\}$ and $End(0,y)=\{y\}$. Then we choose $C/2$ vertices of      
\end{proof}
Next, we prove the lower bound on the threshold for the appearance of a perfect matching (and hence of a Hamilton cycle).
\begin{proposition}\label{pro:lower_bound_of_threshold}
     Let $\gamma\in(0,1]$ and let \(1/n\ll\eps\ll\alpha\le 1\). If $G$ be an \(n\)-vertex $(q,\beta)$-bijumbled graph with $\beta\le \eps qn$, $\delta(G)\ge \alpha qn$ and $qn=\Omega(\log n)$, then for \(p(n)= \frac{\log n}{ 4qn}\), $G_p$ contains no perfect matching with high probability. In particular, if \(G\) is an \((n,d,\lambda)\)-graph with \(\lambda\le \eps d\), then this holds already for \(p(n)=(1-\gamma)\frac{\log n}{d}\).
\end{proposition}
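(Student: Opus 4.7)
The plan is to show that $G_p$ contains an isolated vertex with high probability, which immediately rules out any perfect matching (and any Hamilton cycle). Let $X := |\{v \in V(G) : \deg_{G_p}(v) = 0\}|$; I will apply the second moment method (Chebyshev's inequality).

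For the first moment, I would use bijumbledness to concentrate the degree sequence. Setting $B := \{v \in V(G) : d_G(v) > (1+10\eps)qn\}$ and applying the bijumbled bound $e(B,V(G))\le q|B|n+\beta\sqrt{|B|n}$ against the lower bound $e(B,V(G)) = \sum_{v\in B}d_G(v) \ge (1+10\eps)qn|B|$ yields $|B|\le n/100$. Writing $T := V(G)\setminus B$, for $v \in T$ and $p = \log n/(4qn)$ (which is bounded away from $1$, say $p\le 1/2$, under $qn=\Omega(\log n)$ with a sufficient implicit constant),
\[
(1-p)^{d_G(v)} \ge \exp\!\l(-\frac{(1+10\eps)\log n}{4(1-p)}\r) \ge n^{-1/2+c}
\]
for some absolute $c>0$ once $\eps$ is small. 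Hence $\mathbb{E}[X] \ge |T|\cdot n^{-1/2+c} = \Omega(n^{1/2+c}) \to \infty$.

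For the second moment, the key observation is that the events ``$u$ is isolated'' and ``$v$ is isolated'' are \emph{independent} whenever $uv\notin E(G)$, since they depend on disjoint sets of $G_p$-edges; hence only pairs $uv\in E(G)$ contribute to $\mathrm{Var}[X]$:
\[
\mathrm{Var}[X] \le \mathbb{E}[X] + \sum_{(u,v):\,uv\in E(G)} p(1-p)^{d_u+d_v-1}.
\]
I would bound the edge sum by splitting along $T$. When both endpoints lie in $T$, the uniform upper bound $(1-p)^{d_u+d_v}\le n^{-\alpha/2}$ (obtained from $d_v\ge\alpha qn$) combined with the bijumbled count $e(T,T)\le (1+o(1))qn^2$ gives contribution $O(pqn^2\cdot n^{-\alpha/2})=O(n\log n\cdot n^{-\alpha/2})$; for edges touching $V(G)\setminus T$, the bijumbled bound $e(V(G)\setminus T, V(G)) = O(\eps^2 qn^2)$ together with the trivial inequality $(1-p)^{d_u+d_v-1}\le 1$ yields $O(\eps^2 n\log n)$. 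Both contributions are $o(\mathbb{E}[X]^2)$ since $\eps\ll\alpha$, so Chebyshev gives $\Pr[X=0]\le\mathrm{Var}[X]/\mathbb{E}[X]^2 = o(1)$.

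For the sharper $(n,d,\lambda)$-graph statement with $p=(1-\gamma)\log n/d$: $d$-regularity makes the calculation much cleaner, giving $\mathbb{E}[X]=n(1-p)^d=n^{\gamma-o(1)}\to\infty$, and the variance bound follows from the analogous computation without the need for a degree split. The main technical obstacle is managing the degree inhomogeneity in the bijumbled variance bound, but this is routine once the bijumbledness-based degree concentration is set up, the clinching fact being $pq=\log n/(4n)\to 0$.
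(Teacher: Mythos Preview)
Your approach — second-moment method on the count of isolated vertices, with bijumbledness controlling the degree sequence for the first moment — is exactly the paper's. One remark: your split along $T$ in the variance estimate is unnecessary (and the claim $e(V\setminus T,V)=O(\eps^2qn^2)$ does not actually follow from your bound $|B|\le n/100$); the paper simply applies the minimum-degree bound $(1-p)^{d_u+d_v}\le (1-p)^{2\alpha qn}\le n^{-\alpha/2}$ uniformly to every edge of $G$, yielding $\mathrm{Var}(I)\le O(n^{1-\alpha/2}\log n)=o(\mathbb{E}[I]^2)$ in one line.
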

\begin{proof}
Suppose $p=\frac{\log n}{ 4qn} < 1$.
To show that $G_p$ has no perfect matchings, it suffices to show that there is an isolated vertex with high probability. For $v\in V(G)$, let $I_v$ be the indicator that $v$ is an isolated vertex in $G_p$ and $I=\sum_{v\in V(G)}I_v$ be the number of isolated vertices in $G_p$.
%Since $e(G)=\frac{1}{2}(qn^2\pm\beta n)\text{ and }d(v)\ge \alpha qn$, l
Let $A:=\{v\in V(G):d(v)\le 2qn\}$ and $B:=V(G)\setminus A$, then we have 
\[
2qn|B|\le e_G(V(G), B)\le qn |B| + \beta n,
\]
implying $|B| \le \beta/q\le \varepsilon n$. 
Therefore, we have
\[\mathbb{E}(I)=\sum_{v\in V(G)}\mathbb{P}[I_v]\ge\sum_{v\in A}\mathbb{P}[I_v]\ge (1-\varepsilon)n(1-p)^{2qn}=\Omega\left(n^{1/2}\right).
\]
On the other hand, as $p nq=\log n/4$ and $(1-p)^{\alpha nq}\le e^{-\alpha pnq}\le n^{-\alpha/4}$, we have 
    \begin{align*}
        \text{Var}(I)&=\sum_{v,u\in V(G)}\l(\mathbb{E}(I_v I_u)-\mathbb{E}(I_v)\mathbb{E}(I_u)\r)\le \sum_{u v\in E(G)}p(1-p)^{d(v)+d(u)-1}\\
        &\le qn^2p(1-p)^{2\alpha nq}\le n^{1-\alpha/2}\log n=o(\mathbb{E}^2(I)).
    \end{align*} 
By the second moment method, $I>0$ almost surely, which implies that there is no perfect matchings with high probability. In particular, for \((n,d,\lambda)\)-graphs we can further show that when \(p=(1-\gamma)\frac{\log n}{d}<1\), \(G_p\) has isolated vertices with high probability.
\end{proof}

Now, we are ready to prove Theorem~\ref{thm:main_thm_bijumbled}. 

\begin{proof}[Proof of Theorem~\ref{thm:main_thm_bijumbled}]
The lower bound follows directly from Proposition~\ref{pro:lower_bound_of_threshold}, so it remains to prove the upper bound. 
Let $\gamma>0$ be an arbitrary constant and \(p=(1+\gamma)\frac{\log n}{\alpha qn}\). %then by applying Theorem~\ref{thm:robust_expander} and Lemma~\ref{lem:tutte–expander_lemma}, we conclude that \(G_p\) contains a perfect matching with high probability.
By ~\Cref{thm:robust_expander} and Lemma~\ref{lem:tutte–expander_lemma}, $G_p$ contains a perfect matching with high probability; the same holds for a Hamiltonian cycle by Theorem~\ref{expander graph Hamiltonian}.  
%On the other hand, if $p\ll c\log n/qn$, same as the discussion in the proof of Theorem~\ref{thm:main_thm_bijumbled}, with high probability there exists an isolated vertex in $G_p$, and therefore $G_p$ has no Hamiltonian cycle. 
\end{proof}

Moreover, \Cref{thm:main_thm_ndlambda} can be derived from \Cref{thm:main_thm_bijumbled} with parameters $q=\frac{d}{n},~\beta=\lambda$ and \(\alpha=1\), using Proposition~\ref{pro:lower_bound_of_threshold}.

\section{Sparse graph coupling}\label{sparse couple}
In this section, we will prove a coupling lemma for triangles in an expander graph $G$ and hyperedges in the $K_3$-hypergraph of $G$ (see definition below). 
This coupling result will help us to get the correct order of magnitude of $p$ in~\Cref{thm:main_thm_triangle_factor}. 
We must note that both the statement and the proof of the lemma rely heavily on the work of Riordan~\cite{riordan2022random}, which corresponds to the case when $G$ is a complete graph.

Before stating the lemma, we introduce some structures we will use in the proof. 
The main idea of such a coupling result is to compare the triangles in a random graph and the edges of a random 3-uniform hypergraph. For this, we define the clique-hypergraph as follows.
Given a graph $G$, let $H$ be the $K_r$-\emph{hypergraph} of $G$, which is a $r$-uniform hypergraph such that $V(H)=V(G)$ and $E(H)$ consists of all copies of $K_r$ in $G$.

Our proof of the coupling lemma is similar to the approach used by Riordan in \cite{riordan2022random}. Roughly speaking, we test for the presence of each possible triangle in the sparse graph $G$ one by one with probability $p^3$.  
Therefore, it suffices to show that, at least on a global event of high probability, the conditional probability that a particular test succeeds given the history is at least $p^3$. 
If this conditional probability exceeds $p^3$, we need to ``thin'' it. 
Suppose this conditional probability is $p'>p$, we then toss a coin with head probability $p^3 / p'$ so that the resulting conditional probability becomes exactly $p^3$. 
In addition, to control the failure probability, we define certain bad events with occurrence probability $o(1)$, such as the appearance of some dense configurations or vertices with low degree. We will show that if these bad events do not occur, the coupling succeeds, and hence our coupling lemma holds.

%\textcolor{blue}{outline a proof}

In our setting, the underlying graph 
$G$ is already sparse. Therefore, the definition of the bad event requires more careful treatment than in the dense case. To control the distribution of triangles in each test, we introduce a family of forbidden configurations generated by $5$ linear $3$-cycles with one common edge. This part differs from that used in \cite{riordan2022random}. The configurations in this family are designed so that they do not occur in random hypergraphs of the density we consider.

We define a family of forbidden configurations as follows.
Let $W$ be the graph with $V(W)=\{v_i,u_i~|i\in [3]\}$ and $E(W)=\{v_1v_2,v_2v_3,v_3v_1, v_1u_1,v_2u_1,v_2u_2, v_3u_2,v_1u_3,v_3u_3\}$ and let $C_3^{(3)}$ be %\textcolor{blue}{the spanning subhypergraph of}
the $K_3$-hypergraph of $W$ with vertex set $V(W)$ and edge set $\{v_1u_1v_2,v_2u_2v_3,v_3u_3v_1\}$ (i.e. $C_3^{(3)}$ is the $3$-uniform linear $3$-cycle). Then let $\mathcal{F}$ be the collection of graphs formed by $5$ linear $3$-cycles $C_1, C_2, C_3, C_4, C_5$ with common edge $v_1u_1v_2$ such that the other edges are distinct. Note that we do not require that these $5$ linear $3$-cycles be vertex-disjoint in addition to $v_1,u_1,v_2$. Then we have the following counting result for $\mathcal{F}$.

\begin{proposition}\label{count}
        Let $0\le 1/n\ll \varepsilon, 1/d\le1$ and $\lambda\le \varepsilon d$. Let $G$ be an $(n,d,\lambda)$-graph with $\lambda \le \varepsilon d$ and $H$ be the $K_3$-hypergraph of $G$.
        Then there are at most $2^6\frac{d^{23}}{n^5}$ copies of graphs in $\mathcal{F}$.
\end{proposition}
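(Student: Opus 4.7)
The plan is to bound $\#\mathcal{F}$ by direct enumeration of labeled copies. A labeled copy is specified by an ordered triple $(v_1, u_1, v_2)$ defining the common hyperedge (which must be a triangle of $G$) and, for each $i \in [5]$, an ordered triple $(v_3^{(i)}, u_2^{(i)}, u_3^{(i)})$ giving the two further triangles of the $i$-th linear $3$-cycle. The structural constraints are that $v_3^{(i)} \in N(v_1) \cap N(v_2)$, $u_2^{(i)} \in N(v_2) \cap N(v_3^{(i)}) \subseteq N(v_2)$, and $u_3^{(i)} \in N(v_1) \cap N(v_3^{(i)}) \subseteq N(v_1)$. Using the codegree bound for each $v_3^{(i)}$ and the crude degree bound $d$ for $u_2^{(i)}, u_3^{(i)}$, I obtain
\[
\#\mathcal{F} \le \sum_{v_1 \in V(G)} \sum_{v_2 \in N(v_1)} |N(v_1) \cap N(v_2)| \cdot \bigl(|N(v_1) \cap N(v_2)| \cdot d^2\bigr)^5 = d^{10} \sum_{(v_1, v_2)\colon v_1 v_2 \in E(G)} |N(v_1) \cap N(v_2)|^6,
\]
where the outer sum in the right-hand side runs over ordered pairs of adjacent vertices.

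To control the remaining sum, I would combine two standard estimates. The first is the uniform codegree bound $|N(u) \cap N(v)| \le 2 d^2/n$, which follows from the expander mixing lemma (Lemma~\ref{expandermixing}) applied to $\{u\}$ and $N(v)$. The second is the identity $\sum_{v_1 v_2 \in E(G)} |N(v_1) \cap N(v_2)| = 3 T(G)$, where $T(G)$ is the number of triangles of $G$, together with $T(G) \le d^3/3$; the latter is a consequence of the trace formula $6T(G) = \mathrm{tr}(A^3) = d^3 + \sum_{i \ge 2} \lambda_i^3$ and the estimate $\bigl|\sum_{i \ge 2} \lambda_i^3\bigr| \le \lambda \cdot \mathrm{tr}(A^2) = \lambda \cdot nd$. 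Peeling five factors of $|N(v_1) \cap N(v_2)|$ with the first estimate and summing the sixth factor via the second gives
\[
\sum_{(v_1, v_2)\colon v_1 v_2 \in E(G)} |N(v_1) \cap N(v_2)|^6 \le \left(\frac{2 d^2}{n}\right)^5 \cdot 6 T(G) \le \frac{32 d^{10}}{n^5} \cdot 2 d^3 = \frac{64 d^{13}}{n^5},
\]
and hence $\#\mathcal{F} \le 2^6 d^{23}/n^5$, which is the claimed bound.

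The hard part is justifying the uniform codegree estimate $|N(u) \cap N(v)| \le 2 d^2/n$: this needs $\lambda$ small enough that the error in the expander mixing lemma is absorbed by $d^2/n$. The raw bounds are $|N(u) \cap N(v)| \le d^2/n + \lambda \sqrt{d}$ (from bijumbledness) and $|N(u) \cap N(v)| \le d^2/n + \lambda^2$ (from the eigenvalue bound applied to $A^2$), so the estimate is in force once $\lambda$ is suitably smaller than $d^{3/2}/n$ or $d/\sqrt{n}$, respectively. The hypothesis $\lambda \le \varepsilon d$ must be read together with the parameter hierarchy $1/n \ll \varepsilon, 1/d \le 1$, and in the regime where this proposition is actually used in the proof of Theorem~\ref{thm:main_thm_triangle_factor}, the stronger bijumbledness $\lambda \le \varepsilon d^2/n$ and lower bound $d \ge C n^{5/6} \log^{1/2} n$ comfortably yield the required codegree estimate. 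I expect this single step to carry the only nontrivial content; the remainder of the argument is direct multiplication, summation, and the trace identity for $T(G)$.
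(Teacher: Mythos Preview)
Your argument is correct and takes essentially the same route as the paper: the paper chooses the pair $(u_2,v_3)$ as an edge in $G[N(v_2)]$ (using $e(G[N(v_2)])\le 2d^3/n$) and then $u_3\in N(v_1)$, while you choose $v_3\in N(v_1)\cap N(v_2)$ via the codegree bound and then $u_2,u_3$ separately, but both yield $O(d^4/n)$ per cycle and $O(d^{23}/n^5)$ overall. Your caveat that the codegree (equivalently, edges-in-neighbourhood) estimate really needs $\lambda=O(d^2/n)$ rather than merely $\lambda\le\varepsilon d$ is well spotted; the paper's proof invokes the same estimate, so the same implicit hypothesis is present there, and as you note it is satisfied in the regime of Theorem~\ref{thm:main_thm_triangle_factor} where the proposition is actually applied.
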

\begin{proof}
Suppose that those $5$ linear $3$-cycles are $C_i$ for $i\in [5]$.  By Lemma~\ref{expandermixing}, there are at most $d^3$ choices for the common hyperedge $v_1u_1v_2$ (i.e., corresponding to a triangle in $G$) in these $5$ linear 3-cycles. Fix a tuple $v_1u_1v_2$. For a cycle $C_i$ with $i\in [5]$, suppose that $V(C_i)=\{v_i,u_i~|i\in [3]\}$ and $E(C_i)=\{v_1u_1v_2,v_2u_2v_3,v_3u_3v_1\}$. Since $u_2v_3\in E(G[N(v_2)])$ and $e_G(N_G(v_2))\le \frac{2d^3}{n}$, there are at most $\frac{2d^3}{n}$ choices for $u_2,v_3$. Meanwhile, $u_3$ is a neighbor of $v_1$ and $d_G(v_1)=d$. Hence, there are at most $d$ choices for $u_3$. Thus, there are at most $\frac{2d^3}{n}\cdot d=\frac{2d^4}{n}$ choices for left vertices of $C_i$ after setting $v_1,u_1,v_2$.
Since we have $5$ linear $3$-cycles, there are at most \[d^3\cdot \l(\frac{2d^4}{n}\r)^5=\frac{2^5d^{23}}{n^5}\] copies of graphs in $\mathcal{F}$. 
\end{proof}

Now, we state our sparse graph coupling lemma. 
\begin{lemma}[Coupling Lemma]\label{sparse couplelemma}
    Let $0< 1/n\le1/d\ll\varepsilon, ,C\le1$ and suppose that $\lambda\le \varepsilon d$. Let $G$ be an $(n,d,\lambda)$-graph with $d\gg n^{2/3}\log n$, and let $H$ be the $K_3$-hypergraph of $G$. Then the following holds for any $p=p(n,d)\le C(n\log n)^{\frac{1}{3}} d^{-1}$. Let $a<\frac{1}{2^{10}}$ be a constant, and let $\pi=\pi(n,d)=ap^3$. Then $G\cap G(n,p)$ can be coupled with the random hypergraph $H\cap H_3(n,\pi)$ so that, with high probability, for every hyperedge in $H\cap H_3(n,\pi)$ there is a copy of $K_3$ in $G\cap G(n,p)$ with the same vertex set. 
\end{lemma}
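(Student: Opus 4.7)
The plan is to follow the sequential-exposure coupling introduced by Riordan~\cite{riordan2022random} for the dense case, adapting it to the sparse regime of $(n,d,\lambda)$-graphs. Fix an arbitrary ordering $T_1, T_2, \ldots$ of the triangles of $G$ (equivalently, of the hyperedges of $H$). We generate $G\cap G(n,p)$ and $H\cap H_3(n,\pi)$ simultaneously by processing the $T_i$ in order: at step $i$, expose those edges of $T_i$ not yet revealed, and let $q_i$ be the conditional probability that $T_i\subseteq G_p$ given the history. Since unexposed edges remain independent Bernoulli($p$), we have $q_i\in\{0,p,p^2,p^3,1\}$, with $q_i=0$ exactly when some exposed edge of $T_i$ was found absent. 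When $q_i\ge\pi=ap^3$ and $T_i\subseteq G_p$, include $T_i$ in $H\cap H_3(n,\pi)$ with probability $\pi/q_i$; otherwise do not include. This rule gives each $T_i$ an unconditional inclusion probability of exactly $\pi$, independently across $i$, matching the correct marginal; and whenever $T_i$ is included, $T_i$ is a triangle in $G_p$ as required.

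Since $a<2^{-10}<1$, the inequality $q_i\ge\pi$ fails only when $q_i=0$, so the coupling can ``fail'' only by wanting to include a triangle $T_i$ with $q_i=0$ (we carry $T_i$ into $H\cap H_3(n,\pi)$ with probability $\pi$ anyway to preserve the marginal). The whole task reduces to showing that, with high probability, no such failure occurs. Failure events for distinct triangles are positively correlated through shared edges and through more elaborate sharing patterns among several triangles, so a pure first-moment bound via the codegree estimate in Lemma~\ref{expandermixing} is not strong enough in the sparse regime $p\le C(n\log n)^{1/3}/d$. This is where Proposition~\ref{count} enters: whenever a coupling failure propagates through repeated correlations of this type, the resulting chain of shared-edge triangles contains a copy of some $F\in\mathcal{F}$ in $G$, and the bound $|\mathcal{F}|_G\le 2^{6} d^{23}/n^{5}$ combined with the $11$ hyperedges in each $F$ yields $|\mathcal{F}|_G\cdot\pi^{11}=o(1)$ precisely because $d\gg n^{2/3}\log n$, which is exactly the density hypothesis assumed.

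The main obstacle is the bookkeeping in the second-moment/union-bound step: one must identify, for each scenario causing coupling failure at some $T_i$, a structural witness in $G$ that contains a copy of some $F\in\mathcal{F}$, sum over all such witnesses, and bound the expected number of failures by $\pi$ times this count. Each case must be checked according to how many edges of $T_i$ were exposed before step $i$ and which prior triangles $T_j$ exposed them. With every density factor $1/\binom{n}{2}$ of the dense case replaced by the pseudorandom counts guaranteed by the expander mixing lemma, the argument of~\cite{riordan2022random} goes through, and the constants $a<2^{-10}$ and $C>0$ together with the density assumption $d\gg n^{2/3}\log n$ are calibrated to force the total expected failure probability to $o(1)$.
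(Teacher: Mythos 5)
There is a genuine gap, and it lies at the heart of the coupling: your information-revelation scheme is too aggressive. By exposing the individual edge states of every $T_i$ at step $i$, you guarantee that by the time a typical triangle is processed, at least one of its edges has already been exposed by an earlier triangle (each edge of $G$ lies in many triangles since $d\gg n^{2/3}$), and that edge is absent with probability $1-p=1-o(1)$. Hence $q_i=0$ with probability $1-o(1)$ for all but $o(m)$ of the $m=(1+o(1))d^3/6$ triangles, and your expected number of coupling failures is at least $(1-o(1))\,m\pi=\Theta(d^3\cdot p^3)=\Theta(n\log n)$ in the relevant regime $p=\Theta((n\log n)^{1/3}d^{-1})$ — not $o(1)$. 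No refinement of the $\mathcal{F}$-counting can rescue this; the failure is forced by the exposure scheme itself, before any correlation analysis begins.

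The fix, which is what the paper (following Riordan) actually does, is twofold. First, one never reveals individual edges: at step $j$ one flips a coin with probability $\pi/\pi_j$ and, only on heads, tests the \emph{aggregate} event $A_j$ (``are all elements of $E_j^*$ present?''), recording only the yes/no answer. A ``no'' answer is weak, decreasing information, so the conditional probability $\pi_j$ of the next event can be lower-bounded via a Harris-type inequality as $\pi_j\ge p^{|E_j\setminus R|}\bigl(c-c^2\sum_{i\in N_1}p^{|E_i\setminus(E_j\cup R)|}\bigr)$, and the whole analysis reduces to showing the correction sum $S$ is at most $2^8$ with high probability — this is where the bad events $B_1,B_2,B_3$ and the family $\mathcal{F}$ enter, to bound $|N_{10}|$. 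Second, you omit the thinning indicators: the paper attaches to each triangle an independent coin $I_j$ with $\mathbb{P}[I_j=1]=c$ and requires $I_j=1$ as part of $A_j$. This is not optional — since $S$ is only bounded by the constant $2^8$ (not by $o(1)$), one needs $c(1-2^8c)>a$, which is exactly why the hypothesis $a<2^{-10}$ appears. Without the thinning (i.e.\ $c=1$) the lower bound on $\pi_j$ would be $p^{|E_j\setminus R|}(1-S)$, which is useless once $S\ge 1$.
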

\begin{proof}
    Fix a constant $0<c<1$ such that the following holds:
    \[c(1-2^{8}c)>a.\]
    Let $G$ be an $n$-vertex $(n,d,\lambda)$-graph and let $E_1,\dots, E_m$ be the edge sets of all triangles \(F_1,\dots,F_m\) in $G$.
    In addition to the random variables corresponding to the edges of \(G\cap G(n,p)\), we consider an indicator variable \(I_j\) for each triangle \(F_j\) in \(G\), with \(\mathbb{P}[I_j=1]=c\). 
    Note that \(I_j\) is independent of the presence of the edges in \(G\cap G(n,p)\), and that it refers to a different quantity from the random variable describing the presence of the corresponding hyperedge in \(H\cap H_3(n,\pi)\).
    We will construct a random hypergraph \(H'\) by using \(I_i\) to `thin' the triangles in \(G\cap G(n,p)\) so that \(H'\) has the same distribution as $H\cap H_3(n,\pi)$.
    %Let $H'$ be the random $3$-graph $H\cap H_3(n,\pi)$. 
     
    Now, we consider the random (non-uniform) hypergraph \(G^*\) with vertex set \(V(G)\) and edge set \[E(G\cap G(n,p))\cup \{h_i:I_i=1\},\] where each \(h_i\) is the \(3\)-edge induced by the triangle \(F_i\). 
    Define $A_i$ as the event that \(E_i^*:=E_i\cup \{h_i\}\subseteq E(G^*)\).
    Now we construct $H^{\prime}$ by the following algorithm, revealing information of \(G^*\) while simultaneously constructing \(H'\) step by step. 

\textbf{Algorithm:} For each $j$ from $1$ to $m$:

\textbf{Setup}: Calculate the conditional probability \(\pi_j\) of the event \(A_j\) given all information revealed so far.

If $\pi_j\ge \pi$, then flip a coin with head probability $\pi/\pi _j$. If it lands heads, then we check whether $A_j$ holds. If so, we declare that the hyperedge $h_j$ corresponding to $F_j$ is present in $H'$; otherwise, we exclude it.

If $\pi_j< \pi $, then flip a coin with head probability \(\pi\), and include \(h_j\) in \(H'\) if this coin lands head. If this happens, our coupling has failed.

\textbf{Output:} a random hypergraph $H^{\prime}$.

Observe that the hypergraph \(H'\) constructed according to the above algorithm follows the same distribution as $H\cap H_3(n,\pi)$.
It thus remains to check that the coupling fails with probability \(o(1)\). 
Indeed, if this procedure succeeds, then we embed \(H'\) within the ``thinned'' triangle hypergraph, which has a hyperedge for each triangle \(F_j\) in \(G\cap G(n,p)\) with \(I_j=1\), thereby yielding the conclusion of the theorem.
To this end, it suffices to show that the probability that $\pi_j<\pi$ and the hyperedge corresponding
to $E_j$ is present in $H'$ is $o(1)$.
%It suffices to show that the probability that $\pi_j<\pi$ for some $j\in[m]$ is $o(1)$.

Suppose we have reached step \(j\). Next, we will estimate \(\pi_j\). 
Note that in the previous step, we checked whether certain (not necessarily all) events \(A_i\) hold, and in each case  we received the answer ``yes'' or ``no''.
Let \(Y\) be the (random) set of indices \(i\) for which the events \(A_i\) hold, and let \(X\) denote the set of indices for which events \(A_i\) do not hold. 
Let \(R=\cup_{i\in Y}E_i^*\) be the set of (hyper)edges of \(G^*\) found so far.
For $i< j$, let $E_i':=E_i^*\backslash R$; if \(i\in X\), then \(\{h_j\}\notin R \), which implies that \(E_i'=(E_i\setminus R)\cup \{h_i\}\).
Then what we know about \(G^*\) is that all (hyper)edges in \(R\) are present, and for every \(i\in X\), not all (hyper)edges in \(E_i'\) are present. 
%Let $Y$ be set of events $A_i$ with $i<j$ revealed in $H_3$. Let $X$ be the set of events $A_i$ with $i<j$ not revealed in $H_3$. Let $R=\bigcup _{i\in Y}E_i$ be the set of edges revealed so far. For $i\le j$, let $E_i'=E_i\backslash R$. 
Let $G'$ be the random hypergraph on $V(G)$ where all edges in \(R\) are automatically included, each \(2\)-edge not in \(R\) is included independently with probability  $p$, and each \(3\)-edge \(h_i\) not in 
\(R\) is included independently with probability \(c\).
For each \(i<j\), let $A_i'$ be the event that $E_i'\subseteq E(G')$. Then, we have  

\[\pi_j=\mathbb{P}[A_j'~|~\bigcap_{i\in X}(A_i')^c]=\mathbb{P}[E_j'\subseteq E(G')~|~\bigcap_{i\in X}\{E_i'\nsubseteq E(G')\}].\]

%Note that the coupling fails if $\pi_j < \pi$ for some $j$. So we only need to check the probability that $\pi_j<\pi$ for some $j\in[m]$ is $o(1)$. 
Next, we define three bad events. Let $B_1$ be the event that there is a vertex in at least $2\log n$ hyperedges of $H\cap H_3(n,\pi)$. Let $B_2$ be the event that there is a copy of $\mathcal{F}$ in $H\cap H_3(n,\pi)$. Let $B_3$ be the event that there is a copy of $F'$ with $V(F')=\{w_1,\dots, w_5\}$ and $E(F')=\{w_1w_2w_3,w_3w_4w_5,w_2w_4w_5 \}$ in $H\cap H_3(n,\pi)$.
We will show that if $\pi_j<\pi$ and the hyperedge corresponding
to $E_j$ is present in $H'$ (the only case where the coupling fails), then $B_1\cup B_2\cup B_3$ occurs. 
\begin{claim}\label{clm:bad_event}
    $\mathbb{P}[B_1\cup B_2\cup B_3]=o(1)$ in $H\cap H_3(n,\pi)$. 
\end{claim}
\begin{proof}[Proof of claim]
    Note that for a vertex $v\in V(G)$, $d_G(v)=d$. By the expander mixing lemma, 
    \[e(N_G(v))\le \frac{1}{2}\left(\frac{d}{n}d^2+\lambda d\right)\le \frac{d^3}{n}.\]
    %Each edge is counted twice.
    Hence, $v$ is in at most $\frac{d^3}{n}$ triangles in $G$ and so in at most $\frac{d^3}{n}$ hyperedges of $H\cap H_3(n,\pi)$. 
    Since $\pi=ap^3\le\frac{cn\log n}{d^3}$, by Lemma~\ref{lem:chernoff_bound}, with probability $o(1)$, there exists a vertex in at least $2\log n$ hyperedges of $H$.

    Note that by definition, each graph in $\mathcal{F}$ contains exactly $11$ edges. 
    Let $X$ be the number of copies of graphs in $\mathcal{F}$. Then by~Proposition~\ref{count} and $d\ge n^{2/3}$, we see that 
    \begin{align*}
        \mathbb{E}[X]\le \pi^{11}\cdot \frac{2^{6}d^{23}}{n^5}\le a^{11}\frac{2^6n^6}{d^{10}}\log^{11} n=o(1).
    \end{align*}
    By the first moment method, with probability $o(1)$, there exists a copy of some graph in $\mathcal{F}$. 

For the event $B_3$, let $Y$ be the number of copies of $F'$. By Lemma~\ref{expandermixing}, there are at most $n\cdot \frac{d^3}{n}=d^3$ hyperegdes in $H$. Hence, there are at most $d^3$ possible choices for the $w_1w_2w_3$. Note that $w_4w_5$ is an edge in $N_G(w_3)$ and $e(N_G(w_3))\le \frac{d^3}{n}$. Thus, there are at most $2\frac{d^6}{n}$ copies of $F'$ in $H$ and so
    $\mathbb{E}(Y)\le 2\frac{d^6}{n} \cdot \pi^3=o(1).$
By the first moment method, with probability $o(1)$, there exists a copy of $F'$. Thus, we have $\mathbb{P}[B_1\cup B_2\cup B_3]=o(1)$. \qedhere

    %For the event $B_3$, let $S$ be the $K_3$-hypergraph of $W_1\cup W_2$, then we have $e(S)=6$ and $v(S)\le 10$. Let $e=v_1v_3$ be a common edge of $W_1$ and $W_2$, and $v_4v_1v_3$ be a triangle in $W_1$. Then the choices of $v_4$ are at most $n$. Since $G$ is an $(n,d,\lambda)$-graph, then the choices of $v_1v_3$ are at most $2d^3/n$. Let $v_2, v_5,v_6$ be the remaining vertices in $W_1$, then the choices of $v_2, v_5,v_6$ are at most $2d^4/n$. Moreover, let $u_1,u_2,u_3,u_4$ be the remaining vertices in $W_2$ and $u_1v_1v_3, u_2v_1v_3$ are triangles in $W_2$. Then the choices of $u_1,u_2,u_3,u_4$ are at most $2d^5/n$. 
   % Therefore, the number of copies of $S$ in $H$ is at most $16d^{12}/n^2$. Let $X_3$ be the number of copies of $S$ in $H\cap H_3(n,\pi)$, then 
    %\[\mathbb{E}X_3=\frac{16d^{12}}{n^2}\pi^6=\frac{16n^4\log^6n}{d^6}=o(1),\]
    %because of $d\gg n^{2/3}\log n$.
    %Thus, our claim holds.
\end{proof}
%Recall that if $\pi_j<\pi $ for some $j\in [m]$, then we fail. 
Let $N_1$ be the set of $i\in X$ such that $E_i'\cap E_j'\neq \emptyset$, and let $N_2:=X\setminus N_1$.  %be the set of left $i$.
Then 
\begin{align*}
    \pi_j&=\mathbb{P}[A_j'~|~\bigcap_{i\in N_1}(A_i')^c\cap \bigcap_{i\in N_2}(A_i')^c]\ge \mathbb{P}[A_j'\cap \bigcap_{i\in N_1}(A_i')^c~|~\bigcap_{i\in N_2}(A_i')^c]\\
    &=\mathbb{P}[A_j'~|~\bigcap_{i\in N_2}(A_i')^c]-\mathbb{P}[A_j'\cap \bigcup_{i\in N_1}A_i'~|~~\bigcap_{i\in N_2}(A_i')^c]\\
    &\ge\mathbb{P}[A_j']-\mathbb{P}[A_j'\cap \bigcup_{i\in N_1}A_i']
\ge \mathbb{P}[A_j']-\sum_{i\in N_1}\mathbb{P}[A_j'\cap A_i']\\
&= cp^{|E_j\setminus R|}-\sum_{i\in N_1}c^2p^{|(E_i\cup E_j)\backslash R|}
    =p^{|E_j\backslash R|}\left(c-c^2\sum_{i\in N_1}p^{|E_i\backslash \l(E_j\cup R\r)|}\right).
\end{align*}
The second inequality holds since $A_j'$ and $\bigcap_{i\in N_2}(A_i')^c$ are independent and $A_{j}'\cap \bigcup_{i\in N_1}A_i'$ is an increasing event, while $\bigcap_{i\in N_2}(A_i')^c$ is a decreasing event. 
The third inequality holds due to the union bound. 
In the second-to-last equation, the factor \(c\) in the first term comes from the probability that \(h_j\) is in \(G'\), while the factor \(c^2\) in the second term comes from the probability that both \(h_j\) and \(h_i\) are in \(G'\).

Let $S=\sum_{i\in N_1}p^{|E_i\backslash \l(E_j\cup R\r)|}$ and $N_{1\alpha}=\{i~|~i\in N_1 \text{ and 
 }|E_i\backslash \l(E_j\cup R\r)|=\alpha\}$ for $\alpha\in \{0,1,2\}$. Then $S=\sum_{\alpha=0}^{2}\sum_{i\in N_{1\alpha}}p^\alpha$.
 Note that to prove $\pi_j\ge\pi$ with high probability, it suffices to show that $S<2^{8}$ with high probability.
 
Since $E_i'\cap E_j'\neq \emptyset$ for every $i\in N_1$,  we have \(|E_i\cap E_j|=1\). %then $E_i$ intersect $E_j$ with at least one edge.
Hence, $N_{1i}\le \sum_{uv\in E_j}d_{G}(uv)\le 3d$ for $i\in \{0,1,2\}$. 
This implies that \[\sum_{i\in N_{12}}p^{2}\le 3d\cdot C^2\cdot  \frac{n^{\frac{2}{3}}}{d^2} \log n^{\frac{2}{3}}=o(1).\]
Moreover, if $|E_i\backslash \l(E_j\cup R\r)|=1$, then we have exactly two edges of $E_i$ that are in $E_j\cup R$, and the common vertex of these two edges is one vertex of $E_j$ (since $E_i'\cap E_j'\neq \emptyset$). 
By Claim~\ref{clm:bad_event}, with high probability, every vertex in \(E_j\) belongs to at most $2\log n$ triangles in $R$. 
Hence, there are at most $3\cdot 2\log n\cdot 4$ choices of such $E_i$. Thus, with high probability, \[\sum_{i\in N_{11}}p\le 24\log n\cdot C\frac{n^{1/3}}{d}\log^{1/3} n=o(1).\] 
Now, the remaining part is to prove that $|N_{10}|< 2^{8}$ with high probability. 
It suffices to show that if $|N_{10}|\ge 2^{8}$, then $B_2$ happens in $H\cap H_3(n,\pi)$. 
Let $i_0\in N_{10}$. Then $E_{i_0}\subseteq E_j\cup R$ and $E_{i_0}\cap E_j\neq \emptyset$. 
This implies that there are distinct $E_{j_1},E_{j_2}\subseteq R\backslash E_j$ with $E_{j_i}\cap E_{i_{0}}\neq\emptyset$ for $i\in [2]$. Note that $E_{j_1}\cap E_{j_2}=\emptyset$, otherwise $E_{j_1},E_{j_2}$ and $E_{j}$ forms a copy of $F'$ and $B_3$ occurs. Thus $E_{j_1},E_{j_2},\text{ and }E_{j}$ form a linear $3$-cycle in $H^{\prime}$. Let $E_j=e_1e_2e_3$. Let $N_{10}^s$ be the set of $i\in N_{10}$ with $E_i\cap E_j=e_s$ for $s\in [3]$. 
Suppose to the contrary that \[\sum_{i\in N_{10}}1=|N_{10}|=|N_{10}^1|+|N_{10}^2|+|N_{10}^3|\ge 2^{8}.\]
Then there exists \(s\in [3]\) such that $|N_{10}^s|\ge 2^{6}$. 
Without loss of generality, assume that~$|N_{10}^1|\ge 2^{6}$. Let $E_{i_1},\dots,E_{i_r}$ be the triangles with $i_s\in N_{10}^1$ for $s\in [r]$. 
By the discussion above, denote the linear $3$-cycle determined by $E_{i_s}$ as $C_{i_s}$ for $s\in [r]$. Note that these $C_{i_s}$ have a common $3$-edge $E_j$.
 
Next, we claim that there exists a copy of some graph in $\mathcal{F}$ formed by members of $C_{i_1}, \dots, C_{i_r}$. 
Indeed, let $F$ be a maximal collection of $C_{i_s}$ such that any two of them intersect exactly in the same \(3\)-edge \(E_j\). %these cycles have exactly one common $3$-edge. 
By the definition of~$\mathcal{F}$, if $|F|\ge 5$, then we are done. 
Suppose instead $|F|\le 4$. 
This implies that each cycle $C_{i_s}\notin F$ intersects some cycle in $F$ with at least two $3$-edges. 
Moreover, this means that all vertices of $E_{i_s}$ are contained in hyperedges of the cycle of~$F$. 
However, note that there are at most $6\cdot4=24$ vertices in the cycles of $F$. 
Hence, there are at most $\binom{24}{1}\le 2^{5}$ choices for $E_{i_s}$, implying that $|N^1_{10}|\le 2^5+|F|=36 < 2^6$, a contradiction.
%. But \[|N^1_{10}|\ge 2^{6}> 2^{5}+|F|.\] 
%It is a contradiction. 
Therefore, if $|N_{10}|>2^{8}$, then $B_2$ happens.
The proof is completed.
\end{proof}

\section{Robust triangle factor}\label{iterative ab}
\subsection{Spreadness}
As mentioned in the introduction, deriving Theorem~\ref{thm:main_thm_triangle_factor} from a spreadness function requires us to pass through the recent breakthrough result of Frankston, Kahn, Narayanan, and Park~\cite{FKNP}. %A hypergraph is called \emph{$r$-bounded} if each edge has size at most $r$.
\begin{defn}
    Let $q\in [0,1]$. Let $(V,\mathcal{H})$ be a hypergraph, and let $\mu$ be a probability distribution on $\mathcal{H}$. We say that $\mu$ is \emph{$q$-spread} if
    \[\mu(\{A\in \mathcal{H}~:~S\subseteq A\})\le q^{|S|}\text{ for all $S\subseteq V$}.\]
\end{defn}
In our context, we are primarily concerned with such a hypergraph \(\mathcal{H}\) where \(V:=E(H)\) and \(H\) is the $K_3$-hypergraph of $G$, and \(\mathcal{H}\) denotes the collection of perfect matchings of \(H\).

%\red{JH. Where have we used this definition?}
%\begin{defn} Let $p\in[0,1]$ and let $H$ be a finite set. We denote by $H(p)$ a $p$-random subset obtained by including each element of $H$ independently at random with probability $p$.
%\end{defn}
Frankston, Kahn, Narayanan, and Park (FKNP) proved the following theorem. 
Given a hypergraph \(H\), we say that \(H\) is \(r\)-uniform if every edge has size exactly \(r\).
\begin{theorem}[\cite{FKNP}]\label{FKNP}
    If $(V,\mathcal{H})$ is an $r$-uniform hypergraph and $\mathcal{H}$ supports a $q$-spread distribution, then there exists an absolute constant $K$ such that a $p$-random subset of $V$ contains an edge in $\mathcal{H}$ a.a.s if $p\ge Kq\log r$ as $r\rightarrow \infty$.
\end{theorem}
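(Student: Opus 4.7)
The plan is to follow the iterative covering scheme that underlies the FKNP theorem. First I would reduce matters to the one-sided statement: for $p\ge Kq\log r$ and $W$ a $p$-random subset of $V$, with probability at least $1/2$ there exists some $A\in\mathcal{H}$ with $A\subseteq W$. A standard boosting (replace $W$ by a union of independent samples) then upgrades this to the a.a.s.~conclusion at the cost of enlarging $K$ by an absolute factor, since $\log r\to\infty$.

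To produce such an edge $A$, I would decompose the sampling into $t=\Theta(\log r)$ layers, writing $W=W_1\cup\cdots\cup W_t$ where each $W_i$ is an independent $p'$-random subset of $V$ and $p'=\Theta(q)$ is chosen so that $1-(1-p')^{t}=p$. Along the way I maintain a random candidate edge $A_i\in\mathcal{H}$ together with its uncovered portion $U_i:=A_i\setminus(W_1\cup\cdots\cup W_i)$, initialized by drawing $A_0$ from the $q$-spread measure $\mu$. The aim at step $i$ is to use the fresh layer $W_{i+1}$ to produce $A_{i+1}\in\mathcal{H}$ with $|U_{i+1}|\le|U_i|/2$. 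After $\Theta(\log r)$ such halvings the uncovered portion drops below $1$ and we have found $A\subseteq W$.

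The engine of the iteration is a swapping lemma. Conditional on the history, for each element $v\in U_i$ I would try to exhibit an alternative $A'\in\mathcal{H}$ that agrees with $A_i$ on the covered part $A_i\cap(W_1\cup\cdots\cup W_i)$ and whose uncovered portion avoids $v$. Spreadness enters as the crucial quantitative input: for every $T\subseteq V$ one has $\mu(\{A\in\mathcal{H}:T\subseteq A\})\le q^{|T|}$, and this bound lets me control the $\mu$-measure of viable replacements that are consistent with the already-revealed portion of $A_i$. Combining this bound with the $p'=\Theta(q)$ probability that $W_{i+1}$ covers any given candidate element, a union bound gives that the step fails with probability $o(1/\log r)$; the $\log r$ factor in $p=Kq\log r$ is what absorbs the logarithmic loss per layer.

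The main obstacle, and where care is required, is the conditioning: the candidate $A_i$ itself is a function of $W_1,\dots,W_i$, so the resampling must be arranged so that $A_{i+1}$ and $W_{i+1}$ interact cleanly. The standard device is a ``pivotal encoding'' of the certificate $(A_i,U_i)$ coupled to $\mu$ via a two-round exposure, which lets the probability of a bad swap at each layer factor through the spread bound rather than accumulate uncontrolled dependencies. Once this encoding is in place, a union bound over the $\Theta(\log r)$ stages, together with the geometric decrease of $|U_i|$, completes the proof.
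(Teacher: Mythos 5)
This statement is quoted from \cite{FKNP} and the paper offers no proof of it, so there is no internal argument to compare against; I can only assess your sketch against the known proof of Frankston--Kahn--Narayanan--Park. Your high-level architecture is the right one: split the $p$-random set into $\Theta(\log r)$ independent layers of density $\Theta(q)$, maintain a partially covered edge, and show that each fresh layer lets you pass to another edge of $\mathcal{H}$ whose uncovered ``fragment'' has at most half the size, so that after $\Theta(\log r)$ rounds the fragment is empty. This is indeed the skeleton of the FKNP argument, and the role you assign to spreadness (bounding the measure of edges containing a prescribed set) is the correct one.

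The gap is that the single-round lemma --- the entire mathematical content of the theorem --- is asserted rather than proved, and the mechanism you describe for it would not work as stated. Exhibiting, for each individual $v\in U_i$, an alternative edge avoiding $v$ does not produce a geometric decrease of $|U_i|$, and a union bound over the elements of $U_i$ does not yield failure probability $o(1/\log r)$ from the spread hypothesis in any direct way; in fact the per-round success probability in FKNP is not close to $1$, and the argument instead controls the \emph{expected} size of the minimum fragment and applies Markov. The actual engine is a global counting (compression) argument over pairs $(A,W')$: one shows that the number of pairs for which every edge $A'$ in the support has large uncovered fragment inside $A$ is small, by encoding such a pair via a large subset $S\subseteq A\setminus W'$ and using $q$-spreadness to bound the number of edges containing $S$; one must also be careful that the object carried from round to round is a fragment (or a spread measure on fragments), not a single candidate edge, precisely to handle the conditioning issue you flag. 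Naming a ``pivotal encoding'' does not discharge this step. As written, the proposal is a correct roadmap but not a proof; the halving lemma would need to be stated precisely and proved via the fragment-counting argument.
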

Now we are ready to outline the proof of Theorem~\ref{thm:main_thm_triangle_factor}. 
By Theorem~\ref{FKNP} and Lemma~\ref{sparse couplelemma}, it suffices to show that there exists an $O(n/d^3)$-spread measure on the set of perfect matchings in the \(K_3\)-hypergraph of \(G\). To construct such a measure, we use the iterative absorption method, which consists of three steps. We first select a chain of induced subgraphs of the host graph (Lemma \ref{lem:vortex_lemma}), specifically by randomly choosing $G_N\subseteq\dots \subseteq G_1\subseteq G$. Since each induced subgraph \(G_i\) has a suitable pseudorandom property, we can build the desired measure in each $G_i$ successively. This will be accomplished by randomly covering almost all the vertices in $G_i$ by itself and dealing with all remaining vertices by $G_{i+1}$ (Lemma~\ref{lem:cover_down_lemma}). Finally, the leftover vertices in $G_N$ are covered by Lemma~\ref{thm:bijumbled_graph}.

For the rest of this section, after some preparation, we prove the vortex lemma (Lemma~\ref{lem:vortex_lemma}) in Section~\ref{sec:vortex_lemma} and the cover-down lemma (Lemma~\ref{lem:cover_down_lemma}) in Section~\ref{sec:cover_down_lemma}, and finally prove Theorem~\ref{thm:main_thm_triangle_factor} in Section~\ref{sec:triangle_factor}.

\subsection{Find an almost triangle factor}

%For each $S\in{\binom{[n]}{i}}$, $i=1,2$, we define $d_H(S)=\left\{e\in E(H):S\subset e\right\}$ be the \emph{$i$th degree} of $S$ in $H$. 
%Let $d_i(H)=\min\{d_H(S):S\in\binom{[n]}{i}\}$ be the minimum $i$th degree of $H$, where $i=1,2$.
Given a graph \(G\), we denote $H_3(G)$ by the \(K_3\)-hypergraph of \(G\). 
The following lemma gives us a spread probability distribution on the set of almost perfect matchings in $H_3(G)$ in a sparse setting.
\begin{comment}
We sketch a short proof based on the R\"odl nibble given in~\cite[Theorem 4.7.1]{alon2016probabilistic} to a harshly subsampled random hypergraph.
\end{comment}

\begin{lemma}{}\label{lem:find_almost_traingle_factors_in_bijumbled_graphs}
Let $0<1/n\ll\varepsilon\ll
\eta\le 1$, $n^{-2/3}\ll q\le1$ and $\beta\le\varepsilon q^2n$. 
Then there exists a constant $C_1=C_1(\eta)$ such that the following holds for all sufficiently large $n$. 
Assume $G$ is a $(q,\beta)$-bijumbled graph on the vertex set $[n]$. 
%Let $H$ be the $3$-uniform hypergraph on $[n]$ with edge set consisting of all triangles in $G$. 
Then there exists a $(\frac{C_1}{q^3n^2})$-spread probability distribution $\mathcal{D}$ on the set of matchings in $H_3(G)$ that cover at least $(1-\eta)n$ vertices.

\end{lemma}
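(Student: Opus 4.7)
The plan is to build the spread distribution via an iterative semi-random (Rödl-nibble) matching process on the $3$-uniform triangle hypergraph $H := H_3(G)$.

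First I analyze the structure of $H$. The $(q,\beta)$-bijumbledness with $\beta \le \varepsilon q^2 n$, applied to pairs of neighborhoods, shows that for each $v$ the number of triangles through $v$ equals $e_G(N_G(v)) = (1 \pm o(1)) q^3 n^2 / 2$; hence $H$ is nearly $D$-regular with $D := q^3 n^2 / 2$, has $(1 \pm o(1)) q^3 n^3 / 6$ hyperedges, and has typical codegree $(1 \pm o(1)) q^2 n$. The hypothesis $q \gg n^{-2/3}$ forces $qn \gg n^{1/3} \to \infty$, so the codegree-to-degree ratio is $O(1/(qn)) = o(1)$, which is precisely the regime in which nibble-type arguments apply.

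Next I run the nibble for $R = R(\eta)$ rounds: at round $i$, each triangle surviving in $H_i$ is sampled independently with probability $p_i := \delta/D_i$ for a small constant $\delta = \delta(\eta)$; sampled triangles that are pairwise vertex-disjoint among the sampled ones form the round's partial matching, after which the covered vertices and all incident triangles are removed to form $H_{i+1}$. A standard concentration argument (Chernoff together with a bounded-difference inequality exploiting the small codegree-to-degree ratio) shows that with high probability each round covers a constant fraction of the surviving vertices and preserves the pseudo-regularity of $H_{i+1}$. After $R$ rounds, at most $\eta n$ vertices remain uncovered; let $\mu$ be the law of the resulting matching $M$.

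Finally I verify spreadness. For any set $S = \{T_1,\ldots,T_s\}$ of pairwise vertex-disjoint triangles, I condition on the rounds in which each is selected; per triangle, the marginal $\mathbb{P}[T \in M]$ matches the heuristic $|M|/|E(H)| \approx (n/3)/(q^3 n^3/6) = \Theta(1/(q^3 n^2))$, and the joint probability $\mathbb{P}[S \subseteq M]$ factorizes approximately into the $s$-fold product by the independence of the sampling across rounds and across disjoint vertex sets within a round. This yields the spread bound $(C_1/(q^3 n^2))^{s}$ with $C_1 = C_1(\eta)$. The main obstacle is preserving pseudo-regularity of $H_i$ under the conditioning implicit in the spread estimate and showing the per-round marginals remain uniform up to a constant factor even after we condition on specific triangles having been selected. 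This requires careful bounded-difference concentration and relies crucially on the codegree-to-degree ratio $O(1/(qn))$ being $o(1)$, which is exactly what $q \gg n^{-2/3}$ guarantees.
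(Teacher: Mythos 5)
Your proposal takes a genuinely different route from the paper: you run a R\"odl-nibble semi-random matching process on $H_3(G)$, whereas the paper uses a much simpler sequential greedy procedure -- at step $i$ it samples a uniformly random vertex $v_i$ from a large set $A_i$ of vertices that still have high degree in the current residual graph, and then samples a uniformly random triangle through $v_i$; spreadness then falls out of an elementary union bound over which triangle of $S$ lands in which step, using only the lower bounds $|A_i|\ge \eta n/2$ and ``at least $\tfrac19 q^3\eta^2 n^2$ available triangles through $v_i$.'' Two concrete problems with your version. First, your opening claim -- that every vertex lies in $(1\pm o(1))q^3n^2/2$ triangles, so $H$ is nearly $D$-regular -- is not a consequence of the hypotheses. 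With $\beta\le\varepsilon q^2 n$, bijumbledness applied to a singleton gives only $|d(v)-qn|\le \beta\sqrt n\le\varepsilon q^2 n^{3/2}$, which is \emph{larger} than $qn$ once $q\gtrsim n^{-1/2}$; in particular, for constant $q$ there is no per-vertex degree control at all (the lemma has no minimum-degree assumption). Bijumbledness does guarantee that all but $O(\varepsilon^2 q^2 n)$ vertices have degree $\ge qn/2$, and the paper's proof is built precisely around restricting attention to such vertices at each step (the set $A_i$), but your nibble setup does not do this filtering. Second, and more seriously, the lemma's whole content is the pointwise spread estimate $\mathbb{P}[S\subseteq M]\le (C_1/q^3n^2)^{|S|}$ for $|S|$ up to $\Theta(n)$, and your argument for it is an assertion (``factorizes approximately'') rather than a proof. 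Chernoff/bounded-difference concentration preserving pseudo-regularity of $H_i$ is a high-probability statement, not a per-triangle conditional bound, and conditioning that a given triangle was selected in round $i$ alters the residual process in all later rounds; making the factorization rigorous across a multi-round adaptive nibble is known to require a substantial additional argument (of the kind in the spread-Steiner-system literature), not the one-line claim in your sketch. You correctly identify this as ``the main obstacle,'' but identifying it is not resolving it. The paper sidesteps both issues by keeping the random choices uniform over explicitly bounded-below sets at every step, so the spread bound is immediate.
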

\begin{proof}{}{}
Let $0<1/n\ll\varepsilon\ll
\eta\le 1$.
Since $G$ is a $(q,\beta)$-bijumbled graph on the vertex set $[n]$, for any $A, B\subset[n]$, we have 
\[
e(A,B)=q|A||B|\pm\beta\sqrt{|A||B|}.
\]
Set $t=\lceil(1-\eta)n/3\rceil$.
Now, we will construct a random disjoint triangle tuple $(X_1,X_2,\ldots,X_t)$ that covers at least $(1-\eta)n$ vertices of $G$ as follows.

Assume that we have already found triangles $X_1,\ldots, X_{i-1}$, and let $V_i=V(G)\setminus{\cup_{j=1}^{i-1}V(X_j)}$.
Let $G_i=G[V_i]$ and $A_i=\{v\in V_i:d_{G_i}(v)\ge\frac{1}{2} q|V_i|\}$. By the bijumbledness, we have 
\[
e_{G_i}(V_i,V_i\setminus{A_i})\ge q|V_i||V_i\setminus{A_i}|-\beta\sqrt{|V_i||V_i\setminus{A_i}|}.
\]
Moreover, $e_{G_i}(V_i,V_i\setminus{A_i})=\sum_{v\in V_i\setminus{A_i}}d_{G_i}(v)\le\frac{1}{2}q|V_i||V_i\setminus{A_i}|$. Then 
\[
q|V_i||V_i\setminus{A_i}|-\beta\sqrt{|V_i||V_i\setminus{A_i}|}\le \frac{1}{2}q|V_i||V_i\setminus{A_i}|,
\]
which implies that $|A_i|\ge|V_i|/2\ge \eta n/2$. 
First, we sample a vertex $v_i\in A_i$ uniformly at random. 
Since $d_{G_i}(v_i)\ge q|V_i|/2$ and $\varepsilon\ll\eta$, there are at least 
\begin{align*}
\frac{1}{2}e\left(G_i\left[N_{G_i}(v_i)\right]\right)&\ge \frac{1}{2}\left(q\left|N_{G_i}(v_i)\right|^2-\beta\left|N_{G_i}(v_i)\right|\right)=\frac{1}{2}\left|N_{G_i}(v_i)\right|\left(q\left|N_{G_i}(v_i)\right|-\beta\right)\\
&\ge\frac{1}{4}q\eta n\left(\frac{1}{2}q^2\eta n-\varepsilon q^2n\right)\ge\frac{1}{9}q^3\eta^2n^2
\end{align*}
triangles which are incident to $v_i$ in $G_i$, then we randomly sample such a triangle to be $X_i$.

Let $(X_1,\ldots, X_t)$ be the resulting random triangle tuple and define a random matching $M_1=\{X_1,\ldots, X_t\}$. 
Obviously, $M_1$ covers at least $(1-\eta)n$ vertices in $G$.
Now, we start to prove that $M_1$ is an $O(\frac{1}{q^3n^2})$-spread matching in $H$.
For $r\in[t]$, let $R=\{T_1,\ldots,T_r\}$ be a set of $r$ disjoint triangles in $G$. %Assume that $R\subset M_1$, otherwise $\mathbb{P}[R\subset M_1]=0$. 
Let $\pi\in S_r$ be an arbitrary permutation on $[r]$. Then 
\begin{align*}
\mathbb{P}\left[R\subset M_1\right]
%&=\mathop\mathbb{E}_{(X_i,i\in[t])}\left[\sum_{\pi\in S_r}\sum_{1\le i_1<\cdots<i_r\le t}\mathbf{1}_{\left[X_{i_1}=T_{\pi(1)},X_{i_2}=T_{\pi(2)},\ldots,X_{i_r}=T_{\pi(r)}\right]}\right]\\
&\le \sum_{\pi\in S_r}\sum_{1\le i_1<\cdots<i_r\le t}%\mathbb{E}\left[\mathbf{1}_{\left[X_i, i\neq i_k~\forall k\in[r]~\big|~X_{i_1}=T_{\pi(1)},\ldots,X_{i_r}=T_{\pi(r)}\right]}\right]
\mathbb{P}\left[X_{i_k}=T_{\pi(k)},~\forall k\in[r]\right]\\
&\le r!\binom{t}{r}\max_{\overset{1\le i_1<\ldots<i_r\le t,}{\pi\in S_r}}\mathbb{P}\left[X_{i_k}=T_{\pi(k)},~\forall k\in[r]\right]\\
&\le r!\binom{t}{r}\left(\frac{3}{\frac{1}{2}\eta n\frac{1}{9}q^3\eta^2n^2}\right)^r\le \left(\frac{18}{q^3\eta^3n^2}\right)^r.\qedhere
\end{align*}

\end{proof}

\subsection{Vortex lemma}\label{sec:vortex_lemma}
In this section, we prove the following lemma, which guarantees a \emph{distribution} over vortices rather than the existence of any specific one. A crucial feature of our analysis is that the randomness in the choice of the vortex is taken into account when calculating the spread.

\begin{lemma}\label{lem:vortex_lemma}
  Let $d,n$ be positive integers, \(0<1/n\ll 1/C\ll\eps\ll\alpha, 1/C_{\ref{thm:random_subgraphs_of_spectral_expanders}}\le 1\), and $\gamma\in(C_{\ref{thm:random_subgraphs_of_spectral_expanders}}n^{-\frac{1}{6}}\log^{\frac{1}{2}}n, \frac{1}{100})$. Assume that \(G\) is an \((n,d,\lambda)\)-graph with $\lambda n^{1/6}\log^{-1/2}{n}\ge d\gg n^{1/3}$, then there is a distribution on the set of sequences \(V(G)=V_0\supseteq V_1\supseteq\cdots\supseteq V_N=X\), where \(N\le \log_{1/\alpha}n\), with the following properties: 
\begin{enumerate}[label=\rm{(\roman*)}]
        \item\label{item:1.2.1} For every \(0\le i<N\), we have \(|V_{i+1}|=\lceil\alpha^2|V_i|\rceil\);
        \item\label{item:1.2.2} \(|V_N|\in\left[\alpha^2 n^{4/3}/d, n^{4/3}/d\right]\);
        \item\label{item:1.2.3} For every \(v\in V(G)\), every \(0\le i\le N\), we have \(d(v,V_i)=(1\pm\gamma)p_id\), where \(p_i=|V_i|/n\);
        \item\label{item:1.2.4} For every \(0\le i\le N\), \(G[V_i]\) is an \((|V_i|,(1\pm2\gamma)p_id,6p_i\lambda)\)-graph;
        \item\label{item:1.2.5} For every vertex set \(\{v_1,\ldots,v_m\}\subseteq V(G)\) and every vector \(\vec{x}\in [N]^m\), we have 
        \[\mathbb{P}\left[\bigwedge_{i=1}^m(v_i\in V_{x_i})\right]\le{\prod_{i=1}^m\frac{2|V_{x_i}|}{n}}.\]
        
\end{enumerate}

\end{lemma}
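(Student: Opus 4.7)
The plan is to construct the sequence via iterated uniform subsampling. Set $V_0 := V(G)$, and for each $i \ge 0$ choose $V_{i+1}$ uniformly at random among size-$\lceil \alpha^2|V_i|\rceil$ subsets of $V_i$; stop at the first $N$ for which $|V_N|$ lands in $[\alpha^2 n^{4/3}/d,\, n^{4/3}/d]$, which by construction occurs within $N \le \log_{1/\alpha} n$ steps. Conditions~\ref{item:1.2.1} and~\ref{item:1.2.2} are then immediate. The crucial observation is that by the transitivity of uniform subsampling, the unconditional marginal distribution of each $V_i$ is itself the uniform distribution on size-$p_i n$ subsets of $V(G)$, so every $V_i$ can be analyzed individually by Theorem~\ref{thm:random_subgraphs_of_spectral_expanders}.

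For~\ref{item:1.2.4}, I would apply Theorem~\ref{thm:random_subgraphs_of_spectral_expanders} to $V_i$ with $\sigma=p_i$. The two hypotheses $\sigma d > C_{\ref{thm:random_subgraphs_of_spectral_expanders}}\gamma^{-2}\log n$ and $\sigma\lambda > \sqrt{\sigma d\log n}$ are tightest at $i=N$, where $p_N \asymp n^{1/3}/d$; plugging in, they reduce there to relations that follow from the assumed lower bound $\gamma > C_{\ref{thm:random_subgraphs_of_spectral_expanders}} n^{-1/6}\log^{1/2} n$ combined with $d \gg n^{1/3}$, and from $\lambda n^{1/6}\log^{-1/2} n \ge d$, respectively. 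So~\ref{item:1.2.4} holds per level with failure probability $n^{-1/6}$, and a union bound over $N=O(\log n)$ levels still leaves a $1-o(1)$ event. Condition~\ref{item:1.2.3} is not directly supplied by Theorem~\ref{thm:random_subgraphs_of_spectral_expanders} (which only controls edges inside $V_i$), but for any fixed $v \in V(G)$ the count $d(v, V_i)$ is hypergeometric with mean $p_i d$, so the hypergeometric case of Lemma~\ref{lem:chernoff_bound} combined with $\gamma^2 p_N d \gg \log n$ gives $n^{-\omega(1)}$ failure per pair $(v,i)$, and a union bound over all $n\cdot N$ pairs succeeds. Let $E$ denote the intersection of all these good events, so $\mathbb{P}[E] \ge 1/2$.

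For the spreadness statement~\ref{item:1.2.5}, group the pairs by level: set $W_k := \{v_j : x_j \ge k\}$, so that $\bigwedge_j (v_j \in V_{x_j})$ is equivalent to $\bigcap_k \{W_k \subseteq V_k\}$. Conditional on $V_{k-1}$ with $W_k \subseteq V_{k-1}$, the set $V_k$ is a uniform size-$|V_k|$ subset of $V_{k-1}$, and thus
\[
\mathbb{P}\bigl[W_k \subseteq V_k \;\big|\; V_{k-1}\bigr] = \frac{\binom{|V_{k-1}|-|W_k|}{|V_k|-|W_k|}}{\binom{|V_{k-1}|}{|V_k|}} \le \left(\frac{|V_k|}{|V_{k-1}|}\right)^{|W_k|}.
\]
Multiplying over $k$ and swapping the order of products yields the telescoped unconditional bound
\[
\mathbb{P}\!\left[\bigwedge_{j=1}^m v_j \in V_{x_j}\right] \le \prod_{k=1}^N \left(\frac{|V_k|}{|V_{k-1}|}\right)^{|W_k|} = \prod_{j=1}^m \frac{|V_{x_j}|}{n}.
\]
Conditioning on $E$ multiplies this by at most $1/\mathbb{P}[E] \le 2$, which can be absorbed as a factor of $2$ into any single term (using $m \ge 1$), giving the required bound $\prod_j 2|V_{x_j}|/n$.

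The main obstacle is the joint verification of Theorem~\ref{thm:random_subgraphs_of_spectral_expanders}'s hypotheses at every level, most delicately at $i = N$: the inequality $\sigma\lambda > \sqrt{\sigma d\log n}$ with $\sigma$ as small as $\alpha^2 n^{1/3}/d$ is exactly what the assumption $\lambda n^{1/6}\log^{-1/2}n \ge d$ is calibrated to support, and the interplay with the lower threshold on $\gamma$ needs to be followed carefully so that none of the per-level error terms $\gamma$, $6p_i\lambda$, and $n^{-1/6}$ accumulate to dominate the conclusion.
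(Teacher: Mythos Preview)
The proposal is correct and takes essentially the same approach as the paper: iterated uniform subsampling to build the vortex, Lemma~\ref{lem:chernoff_bound} and Theorem~\ref{thm:random_subgraphs_of_spectral_expanders} with a union bound for properties~\ref{item:1.2.3}--\ref{item:1.2.4}, then conditioning on the resulting high-probability event $E$ and absorbing the factor $1/\mathbb{P}[E]$ into the spreadness bound~\ref{item:1.2.5}. Your telescoping derivation of~\ref{item:1.2.5} is more explicit than the paper's (which simply says it ``follows from the hypergeometric distribution''), but the content is identical.
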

\begin{proof}
First, consider the distribution on the set of sequences \(V(G)=U_0\supseteq U_1\supseteq\cdots\supseteq{U_N}=X\) obtained as follows: Set $U_0=V(G)$. 
For as long as $|U_i|>n^{4/3}/d$, let $U_{i+1}$ be a uniformly random subset of $U_i$ of size exactly $\lceil\alpha^2|U_i|\rceil$. 

Let $E$ be the event that properties~\ref{item:1.2.1} to~\ref{item:1.2.4} hold.
Observe that properties~\ref{item:1.2.1} and ~\ref{item:1.2.2} hold by definition. 
Lemma~\ref{lem:chernoff_bound}, Theorem~\ref{thm:random_subgraphs_of_spectral_expanders} and a union bound imply that properties~\ref{item:1.2.3} and~\ref{item:1.2.4} hold with high probability. 
Note that for every nonempty $\{v_1,\ldots,v_m\}\subseteq V(G)$ and evevry $\vec{x}\in\{0,\cdots,N\}^m$, \[\mathbb{P}\left[\bigwedge_{i=1}^m(v_i\in U_{x_i})\right]\le{\prod_{i=1}^m\frac{|U_{x_i}|}{n}},\] which follows from the hypergeometric distribution. 

Let $V_0\supseteq V_1\supseteq\cdots\supseteq V_N$ be the distribution obtained by conditioning $V(G)=U_0\supseteq U_1\supseteq\cdots\supseteq{U_N}$ on the occurrence of $E$. 
By definition, $V_0\supseteq V_1\supseteq\cdots\supseteq V_N$ satisfies properties~\ref{item:1.2.1} to~\ref{item:1.2.4}. 
Furthermore, for every nonempty $\{v_1,\ldots,v_m\}\subseteq V(G)$ and $\vec{x}\in\{0,\cdots,N\}^m$, we have
\[\mathbb{P}\left[\bigwedge_{i=1}^m(v_i\in V_{x_i})\Big|E\right]\le \frac{\mathbb{P}\left[\bigwedge_{i=1}^m(v_i\in V_{x_i})\right]}{\mathbb{P}[E]}\le{\prod_{i=1}^m\frac{2|V_{x_i}|}{n}}. \qedhere\]
\end{proof}

\subsection{Cover-down lemma}\label{sec:cover_down_lemma}
Let $G=(V, E)$ be an almost regular expander and $U\subseteq V$ be a small subset.
The following cover-down lemma states that it can find a spread distribution on matchings that cover all vertices in $V\setminus U$ by using at most an $\eps$-fraction of the vertices of $U$. Here, we use a novel algorithm to complete this step.

\begin{lemma}\label{lem:cover_down_lemma}
Let \(0<1/n\ll\eps\ll\alpha,c\le 1\), \( n^{-2/3}\ll q\le 1\), and \(\beta\le \eps q^2n\)
%Let \(0<1/n\ll\eps,\gamma\ll\alpha,c\le 1\), \( n^{-2/3}\ll q\le 1\), and \(\beta\le \eps q^2n\)
then there exists a constant \(C_1=C_1(\alpha,c)>0\) such that the following holds for all sufficiently large \(n\). 
Let \(G\) be a \(\l(q,\beta\r)\)-bijumbled graph on the vertex set $[n]$, and let \(U\subset V(G)\) be a subset of size \(\lceil\alpha^2n\rceil\). Suppose that \(G\) satisfies for all $v\in[n]$, %$cqn\le d_G(v)\le Cqn$ and \(cq\card{U}\le d_G(v,U)\le Cq\card{U}\).
\(d_G(v,U)\ge cq\card{U}\).
Then there exists a \(\l(C_1/q^3n^2\r)\)-spread probability distribution on the set of matchings $M\subseteq{H_3(G)}$ that satisfies: 
\begin{enumerate}[label=\rm{(\roman*)}]
        \item\label{item:1.3.1} \(M\) covers every vertex in \(V(G)\setminus{U}\);
        \item\label{item:1.3.2} \(M\) covers at most \(\alpha^2\card{U}\) vertices in \(U\);
        %\item\label{item:1.3.3} \(\delta\l(G[U\setminus{V(M)}]\r)\ge(1-\alpha)cq\card{U\setminus{V(M)}}\).
\end{enumerate}

\end{lemma}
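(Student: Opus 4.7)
\textit{Proof proposal.} My plan is to construct the matching $M$ via a two-phase randomized procedure: a nibble-style ``internal'' phase that covers most of $V(G)\setminus U$ using triangles inside $V(G)\setminus U$, followed by a short ``external'' phase that mops up the leftover by triangles with two vertices in $U$. Since the triangles produced in the two phases have disjoint supports, the verification of the spread bound reduces to separately bounding the spreads of the two phases.

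In Phase~1, I apply Lemma~\ref{lem:find_almost_traingle_factors_in_bijumbled_graphs} to the induced subgraph $G[V(G)\setminus U]$, which, as an induced subgraph of a $(q,\beta)$-bijumbled graph with $|V(G)\setminus U|\ge(1-\alpha^2)n$, remains $(q,\beta)$-bijumbled. This produces a spread distribution on a matching $M_1$ in $H_3(G[V(G)\setminus U])$ that covers all but a leftover set $L\subseteq V(G)\setminus U$ of size $|L|\le\tfrac{c}{4}q|U|$. In Phase~2, I process the vertices of $L$ in a uniformly random order $v_1,\ldots,v_{|L|}$. At step $i$, letting $U_{\mathrm{used}}$ denote the $U$-vertices occupied so far, the hypothesis $d_G(v_i,U)\ge cq|U|$ and the bound $|U_{\mathrm{used}}|\le 2|L|\le\tfrac{c}{2}q|U|$ give
\[
|N_G(v_i)\cap(U\setminus U_{\mathrm{used}})|\ge\tfrac{c}{2}q|U|,
\]
and bijumbledness then yields
\[
e\bigl(N_G(v_i)\cap(U\setminus U_{\mathrm{used}})\bigr)\ge\tfrac{1}{2}\Bigl(q\cdot\tfrac{c^2}{4}q^2|U|^2-\beta\cdot\tfrac{c}{2}q|U|\Bigr)\ge\tfrac{c^2\alpha^4}{16}q^3n^2.
\]
I sample a triangle $\{v_i,u_1,u_2\}$ uniformly from this collection and add it to $M_2$, so that $M:=M_1\cup M_2$ covers $V(G)\setminus U$ while using exactly $2|L|\le\alpha^2|U|$ vertices of $U$.

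For the spread estimate, fix a collection $S$ of disjoint triangles and partition $S=S_1\sqcup S_2$, where $S_1$ consists of triangles contained in $V(G)\setminus U$ (which must come from $M_1$) and $S_2$ consists of the remaining triangles (which, having some $U$-vertex, must come from $M_2$). Applying the spread guarantee of Lemma~\ref{lem:find_almost_traingle_factors_in_bijumbled_graphs} to $S_1$ and the step-by-step lower bound $\Omega(q^3n^2)$ on available triangles from Phase~2 to $S_2$, we obtain
\[
\mathbb{P}\bigl[S\subseteq M\bigr]\le\mathbb{P}\bigl[S_1\subseteq M_1\bigr]\cdot\sup_{M_1}\mathbb{P}\bigl[S_2\subseteq M_2\mid M_1\bigr]\le\Bigl(\frac{C_1}{q^3n^2}\Bigr)^{|S|},
\]
after absorbing constants depending on $\alpha,c$ into $C_1$.

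The central difficulty lies in realising Phase~1's output with $|L|\le\tfrac{c}{4}q|U|$ without spoiling the spread constant. Since Lemma~\ref{lem:find_almost_traingle_factors_in_bijumbled_graphs} produces a spread that scales as $\eta^{-3}$ in its parameter, invoking it directly with $\eta=\Theta(q)$ would blow up the spread to $O(q^{-6}n^{-2})$ rather than the required $O(q^{-3}n^{-2})$. To circumvent this—this is the ``novel algorithm'' step—I expect to run Phase~1 with a constant $\eta=\eta(\alpha,c)$ (yielding a much larger leftover $L^{(0)}$ of order $\eta n$) and then insert an intermediate ``halving'' phase: for each $v\in L^{(0)}$ processed in random order, sample a random triangle $\{v,w,u\}$ with $w\in L^{(0)}$ still uncovered and $u\in U$ fresh. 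A computation analogous to the one above, using $|N_G(v)\cap N_G(u)|=q^2n\pm\beta\sqrt{qn}$ for the codegree, shows that for $|L^{(0)}|=\Theta(n)$ there are $\Omega(q^3n^2)$ admissible such triangles per step, so each round roughly halves the leftover at the cost of one $U$-vertex per covered pair. Iterating this intermediate phase $O(\log(1/q))$ times drives the leftover down to the threshold $\tfrac{c}{4}q|U|$, using only $O(\eta n)$ vertices of $U$ in total (comfortably within the $\alpha^2|U|$ budget), while preserving a per-step spread of $O(1/(q^3n^2))$. The bookkeeping needed to merge these intermediate rounds into the final spread bound is the main technical point.
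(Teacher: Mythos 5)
Your two-phase skeleton (apply Lemma~\ref{lem:find_almost_traingle_factors_in_bijumbled_graphs} inside $V(G)\setminus U$, then mop up the leftover with triangles sending two vertices into $U$) matches the paper's, and you correctly isolate the tension: if one charges the whole of $|U_{\mathrm{used}}|$ against $d_G(v_i,U)$ in the mop-up phase, one is forced to take $|L|=O(q|U|)$, which forces $\eta=\Theta(q)$ in Lemma~\ref{lem:find_almost_traingle_factors_in_bijumbled_graphs} and destroys the spread. However, the intermediate ``halving'' construction you invent to get around this has real gaps. First, $(q,\beta)$-bijumbledness gives set-level edge counts, not pointwise degree or codegree control: in particular $|N_G(v)\cap N_G(u)|=q^2n\pm\beta\sqrt{qn}$ is not a consequence of bijumbledness, and an unlucky vertex $v\in L^{(0)}$ can have very few neighbours in the shrinking uncovered part of $L^{(0)}$ once that set falls well below linear size, so the claim of $\Omega(q^3n^2)$ admissible triangles at every step of every round is not justified. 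Second, iterating $O(\log(1/q))$ rounds and claiming the per-step spread just multiplies through needs actual bookkeeping; and pairing $L^{(0)}$ by triangles $\{v,w,u\}$ creates parity/leftover defects of its own. So as written this step would fail.

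The paper sidesteps the entire issue by keeping $\eta$ constant (it takes $\eta=\alpha^7$), accepting a leftover $W$ of size at most $\alpha^4|U|$, and then running the mop-up loop with a \emph{two-pot} strategy rather than your single pot. It first splits $U$ uniformly at random into $U_1\cup U_2$ with $|U_1|=\lceil\alpha^2|U|\rceil$ and conditions on every vertex retaining $(1-\eps)cq$-fraction degree into each part. The loop covers each $v_i\in W$ via $U_1$ by default; if $v_i$ is ``bad'', meaning $d_G(v_i,U_{1,i-1})<\alpha^5qn$ because its $U_1$-neighbourhood happens to be heavily depleted, then $v_i$ is routed through the still-fresh pot $U_2$. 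Crucially, the bound on bad vertices is an \emph{aggregate} bijumbledness count applied to the whole bad set $B_{i-1}$, not a per-vertex claim: comparing $e(B_{i-1},U_{1,i-1})<\alpha^5qn|B_{i-1}|$ with the bijumbled lower bound $q|B_{i-1}||U_{1,i-1}|-\beta\sqrt{|B_{i-1}||U_{1,i-1}|}$ forces $|B_{i-1}|\le 16\alpha^{-4}\eps^2q^2n\ll\alpha^6cqn$, so $U_2$ never gets seriously depleted and every step (in either pot) has $\Omega_{\alpha,c}(q^3n^2)$ candidate triangles. This is exactly the kind of set-level argument bijumbledness can support, and it removes any need for intermediate rounds. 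The lesson is that you should not charge $|U_{\mathrm{used}}|$ against $d_G(v_i,U)$ wholesale; a random split of $U$ plus an aggregate count of depletion-failures suffices.
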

\begin{proof}
%Suppose that \(0<1/n\ll\eps,\gamma\ll\alpha,c\le 1\), \( n^{-2/3}\ll q\le 1\), and \(\beta\le \eps q^2n\).
We begin by splitting \(U\) into two parts, each assigned to cover a different group of vertices in \(V(G)\).  
Let \(\pi\) be a permutation on \([|U|]\) that is chosen uniformly at random. Define
\begin{align*}
U_1:= \left\{\pi(i)~:~i\in\left[1,~\lceil\alpha^2|U|\rceil\right]\right\},
U_2 := \left\{\pi(i)~:~i\in\left[\lceil\alpha^2|U|\rceil+1,~|U|\right]\right\}.
\end{align*}
Let \(E\) be the event that for every \(v\in V(G)\) and \(j\in [2]\), \(d_G(v,U_j)\ge(1-\eps)cq\card{U_j}\). 
We will condition on \(E\), which holds with probability at least \(99/100\) by Lemma~\ref{lem:chernoff_bound}.

Let \(V':=V(G)\setminus U\) and \(G':=G[V']\).
Note that \(G'\) is the subgraph of \(G\) induced on \(V'\), and thus \(G'\) is \(\l(q,\beta\r)\)-bijumbled. %and for all \(v\in V'\), \(\l(1-\alpha\r)cq\card{V'}\le d_{G'}(v)=d_G(v)-d_G(v,U)\le \l(1+\alpha\r)Cq\card{V'}\) since \(\alpha \ll c,1/C\). 
We apply Lemma~\ref{lem:find_almost_traingle_factors_in_bijumbled_graphs} to \(G'\) to find an \(O_{\alpha}\l(1/q^3n^2\r)\)-spread random matching \(M_1\subseteq H_3(G')\) that covers all but at most \(\alpha^{7}\card{V'}\le \alpha^4|U|\) vertices. 
Next, we cover all the remaining vertices, denoted by \(W=V'\setminus V(M_1)\), using vertices from \(U\).
 
%Since $U_j$ has the same distribution as a uniformly random subset of $U$ of size $|U_j|$ for $j\in [2]$, by Lemma~\ref{lem:chernoff_bound}, w.h.p. we have $$ d_G(v,U_j)\ge(1-\gamma)cq\card{U_j} \text{~~for every }v\in V(G),\text{ and }j\in [2].$$ 
%Note that $G$ is an $(n,(1\pm\gamma)d, \lambda)$-graph and $$d(v,U)=d(v,U_1)+d(v,U_2)={(1\pm 2\gamma)\alpha^2d}.$$ So for every vertex $v\in V(G)\setminus{U}$, we have $d(v,V(G)\setminus{U})=\left(1-\alpha^2\pm2\gamma\right)d$.
%Thus, $G[V\setminus U]$ is a $\left(|V\setminus U|,(1-\alpha^2\pm2\gamma)d,\lambda\right)$-graph. 

%Applying Lemma~\ref{lem:find_almost_traingle_factors_in_bijumbled_graphs} on $G[V\setminus U]$, there exists a $(\tfrac{Cn}{d^3})$-spread distribution on the set of almost matchings $\mathcal{M}$ in $H_3$ that cover at least $(1-\alpha^{11})\left|V\setminus U\right|$ vertices. 

Conditioning on $M_1$, we randomly sample a set $M_2$ of triangles to cover all remaining vertices in $W$ as follows.
Given an enumeration $v_1, v_2,\ldots,v_m$ of the vertices in $W$, assume we have already covered $v_1,\ldots,v_{i-1}$ by triangles $T_1,\ldots,T_{i-1}$ respectively. 
Let $W_{i-1}=\{v_i,\ldots,v_m\}$ and $U_{j,i-1}=U_j\setminus{\cup_{k=1}^{i-1}V(T_k)}$ for \(j\in[2]\), where $U_{j,0}:=U_j$. 
Call the vertex $v_i$ is \emph{bad} if $d_G(v_i,U_{1,i-1})<\alpha^{5}qn$. 
For each $i\in\left[m\right]$, let \(B_{i-1}\) be the set of bad vertices \(v_j\) with \(j<i\).

If $v_i$ is not ``bad'', i.e., $d(v_i,U_{1,i-1})\ge\alpha^{5}qn$, we cover it by using \(U_1\). 
Recall that \(G\) is a \(\l(q,\beta\r)\)-bijumbled graph and \(\eps\ll \alpha\), then there are at least
%Recall that \(G\) is a \(\l(q,\beta\r)\)-bijumbled graph and \(d(v_i,U_{1,i-1})\le Cq\card{U}\le Cq\alpha n\), then by \(\eps\ll \alpha\), there are at least 
\begin{align*}
\frac{1}{2}e\l(N_{U_{1,i-1}}\l(v_i\r)\r)\ge \frac{1}{2}\left|N_{U_{1,i-1}}\l(v_i\r)\right|\left(q\left|N_{U_{1,i-1}}\l(v_i\r)\right|-\beta\right)\ge 
\frac{1}{2}\alpha^{5}qn\left(\alpha^{5}q^2n-\eps q^2n\right)\ge\frac{1}{4}{\alpha}^{10}q^3n^2
\end{align*}
triangles (with exactly two vertices in $U_{1,i-1}$) that are incident to $v_i$, and we randomly sample such a triangle.

If $v_i$ is ``bad'', we will cover $v_i$ using the vertices of $U_2$. 
Note that $e\left(B_{i-1}, U_{1,i-1}\right)< \alpha^{5}qn\left|B_{i-1}\right|$. 
On the other hand, \(e\left(B_{i-1}, U_{1,i-1}\right)\ge q\left|B_{i-1}\right|\left|U_{1,i-1}\right|-\beta\sqrt{\left|B_{i-1}\right|\left|U_{1,i-1}\right|}.\)
Therefore, we have 
\begin{align*}
\alpha^{5}qn\left|B_{i-1}\right|
\ge q\left|B_{i-1}\right|\left|U_{1,i-1}\right|-\beta\sqrt{\left|B_{i-1}\right|\left|U_{1,i-1}\right|}&\ge q\left|B_{i-1}\right|\left(\alpha^2-2\alpha^4\right)|U|-\eps q^2 n\sqrt{\left|B_{i-1}\right|\alpha^2|U|}\\
&\ge q\left|B_{i-1}\right|\left(\alpha^4/2\right)n-\alpha^2\eps q^2 n\sqrt{\left|B_{i-1}\right|n}.
\end{align*}
Then, by \(\eps\ll \alpha, c\), for all $i\le m$, $\left|B_{i-1}\right|\le 16 \alpha^{-4}\eps^2 q^2 n\le {\alpha}^6 cqn$, which implies that the number of vertices in $U_2$ covered by the previous triangles is at most $2\left|B_{i-1}\right|\le 2{\alpha}^6 cqn$.
Thus, for each $v\in V(G)$ we have 
\begin{align*}
    d_G(v, U_{2,i-1})&\ge d_G(v, U_2)-2\left|B_{i-1}\right|
    \ge(1-\eps)cq\card{U_2}-2{\alpha}^6 cqn\\
    &\ge (1-\eps)cq{\alpha}^2(1-\alpha^4)n-2\alpha^6cqn\ge \alpha^2cqn/2.
\end{align*}  
Therefore, for each ``bad'' vertex $v_i$, there are at least 
\[\frac{1}{2}e\l(N_{U_{2,i-1}}\l(v_i\r)\r)\ge 
\frac{1}{4}\alpha^{2}cqn\left(\frac{1}{2}\alpha^{2}cq^2n-\eps q^2n\right)\ge\frac{1}{10}{\alpha}^{4}c^2q^3n^2\]
%\begin{align*}
%e\l(N_{U_{2,i-1}}\l(v_i\r)\r)
%\ge \frac{1}{2}\left(q({\alpha}^2cqn/2)^2-\beta Cq\alpha n\right)
%\ge \frac{1}{2}\left({\alpha}^4c^2q^3n^2/4-\eps {\alpha}C q^3n^2\right)
%\ge {\alpha}^{5}q^3n^2
%\end{align*}
available triangles for $v_i$, regardless of the previously made selections, and we randomly sample such a triangle $T_i$.
Thus, we have $M_2=\{T_1,\ldots,T_m\}$. 
Set $M:=M_1\cup M_2$. 
Then $M$ is a \(3\)-uniform matching that covers all vertices in $V\setminus U$ by using at most $2\alpha^4|U|$ vertices in $U$.% and \(\delta\l(G[U\setminus{V(M)}]\r)\ge \min_{v}d_G(v, U_2\setminus {V(M)})\ge (1-\alpha)cq\card{U\setminus V(M)}\).

It remains to show that $M$ is $O\l(1/q^3n^2\r)$-spread. 
Let $S\subseteq H_3(G)$ be a set of hyperedges. 
We need to show that $\mathbb{P}[S\subseteq M]=\left(O\l(1/q^3n^2\r)\right)^{|S|}$. 
First, we assume that $S$ is a matching. Let \(S=S_1\cup S_2\), where $S_1$ is those hyperedges in $S$ with all vertices in $V\setminus U$, and $S_2:=S\setminus S_1$. 
We now have 
\[\mathbb{P}\left[S\subseteq M\right]=\mathbb{P}\left[S_1\subseteq M_1\right]\mathbb{P}\left[S_2\subseteq M_2~|~S_1\subseteq M_1\right].\]

By~\Cref{lem:find_almost_traingle_factors_in_bijumbled_graphs}, $M_1$ is $O\l(1/q^3n^2\r)$-spread, so $\mathbb{P}\left[S_1\subseteq M_1\right]=\left(O(1/q^3n^2)\right)^{|S_1|}$. 
Next, we observe that after conditioning on any outcome of $M_1$, it holds that $S_2\subseteq M_2$ only if for every hyperedge $e\in S_2$, the hyperedge chosen to match the vertex in $e\setminus U$ is $e$. 
Since every such choice is made uniformly at random from at least $\min\{{\alpha}^{10}q^3n^2/4,{\alpha}^{4}c^2q^3n^2/10\}$ possibilities regardless of the previous selections, it follows that 
\[\mathbb{P}\left[S_2\subseteq M_2~|~S_1\subseteq M_1\right]=\left(O_{\alpha,c}(1/q^3n^2)\right)^{|S_2|}.\]
Thus, $\mathbb{P}\left[S\subseteq M\right]=\left(O_{\alpha,c}(1/q^3n^2)\right)^{|S|}$, as desired.
\end{proof}

\subsection{Proof of Theorem \ref{thm:main_thm_triangle_factor}}\label{sec:triangle_factor}
Now, we are ready to show our theorem for the existence of triangle factors in $G_p$.

\begin{proof}[Proof of Theorem~\ref{thm:main_thm_triangle_factor}]
Let $0<1/n\ll1/C\ll\varepsilon\ll\alpha\ll1/C_{\ref{thm:random_subgraphs_of_spectral_expanders}}\le1$ and $\gamma=\max\{C_{\ref{thm:random_subgraphs_of_spectral_expanders}}n^{-1/6}\log^{1/2}n, \lambda/d\}$. As $\lambda\le\gamma d$ and $G$ is an $(n,d,\lambda)$-graph, then \(G\) is also an \((n,d,\gamma d)\)-graph. 
Moreover, when $\gamma=C_{\ref{thm:random_subgraphs_of_spectral_expanders}}n^{-1/6}\log^{1/2}n$, since $d\ge Cn^{5/6}\log^{1/2}n$ and $1/C\ll\varepsilon\ll1/C_{\ref{thm:random_subgraphs_of_spectral_expanders}}$, it follows that $\gamma d\le\varepsilon d^2/n$; and when $\gamma d=\lambda$, it is clear that $\gamma d\le\varepsilon d^2/n$.
Now, let \(H_3(G)\) the \(K_3\)-hypergraph of \(G\). Let $\mathcal{M}$ be a perfect matching on $n$-vertex $3$-graph. 

To prove \Cref{thm:main_thm_triangle_factor}~\ref{traingle factor1}, it suffices, by Theorem~\ref{FKNP} and Lemma~\ref{sparse couplelemma}, to show that there exists an $O(n/d^3)$-spread distribution on copies of~$\mathcal{M}$ in $H_3(G)$. 
Applying Lemma~\ref{lem:vortex_lemma} to the $(n,d,\gamma d)$-graph $G$, one obtain a random sequence of sets \(V(G)=V_0\supseteq V_1\supseteq\dots\supseteq V_N\) that satisfies the properties~\ref{item:1.2.1} to \ref{item:1.2.5} in Lemma~\ref{lem:vortex_lemma}. 
Note that the parameter conditions required by Lemma~\ref{lem:vortex_lemma} hold for our choice of parameters, which can be verified by a straightforward computation. 

We will inductively construct a (random) sequence of matchings \(\emptyset=M_0\subseteq M_1\subseteq \dots\subseteq M_N\) in \(H_3(G)\), satisfying the following properties for every \(1\le i\le N\). For notational convenience, set \(V_{N+1}=\emptyset \).

\begin{enumerate}[label=(A\arabic*)]
    \item \(M_{i}\setminus M_{i-1}\) is \(O\left(n^3/\card{V_{i-1}}^2d^3\right)\)-spread;
    \item \(M_{i}\) covers all vertices in \(V(H)\setminus V_{i}\);
    \item \(|V(M_{i})\cap V_{i}|\le \alpha^2 |V_{i}|\);
    %\item  for every \(v\in V_{i+1}\), \(d_G\l(v,V_{i+1}\setminus V(M_{i+1})\r)=\l(1\pm 2\alpha\r)\frac{|V_{i+1}\setminus{V(M_{i+1})}|}{n}d\);
    \item \(V(M_{i})\cap V_{i+1}=\emptyset\).
\end{enumerate}
    
We begin by taking \(M_0=\emptyset\). Now suppose that for \(1\le i\le N\), we have constructed \(M_i\) with the properties above. 
Let \(V_i'=V_i\setminus (V(M_i)\cup V_{i+2})\), \(G_i=G[V_i']\), and \(U_i=V_{i+1}\setminus V_{i+2}\).

On the one hand, by the property \ref{item:1.2.4} in Lemma~\ref{lem:vortex_lemma}, we have \(G[V_i]\) is an \((|V_i|,(1\pm2\gamma)p_id,6p_i\gamma d)\)-graph, where \(p_i=\card{V_i}/n\). 
Lemma~\ref{lem:mixing_lemma_for_almost_expnaders} implies that for any two subsets \(S,T\subseteq V_i'\), we have 
\begin{align*}
    e\left(S,T\right)
    \ge{\frac{(1-2\gamma)^2d\card{S}\card{T}}{(1+2\gamma)n}-\frac{1+2\gamma}{1-2\gamma}6p_i\gamma d\sqrt{\card{S}\card{T}}}&\ge \frac{(1-6\gamma)d\card{S}\card{T}}{n}-6(1+4\gamma)p_i\gamma d\sqrt{\card{S}\card{T}}\\
    &\ge\frac{d}{n}\card{S}\card{T}-6(2+4\gamma)p_i\gamma d\sqrt{\card{S}\card{T}}
\end{align*}
and \(e\left(S,T\right)\le \frac{d}{n}\card{S}\card{T}+6(2+4\gamma)p_i\gamma d\sqrt{\card{S}\card{T}}\). 
Thus, \(G_i\) is \(\l(d/n, 6(2+4\gamma)p_i\gamma d\r)\)-bijumbled.

On the other hand, for every \(v\in V_i'\), by Lemma~\ref{lem:vortex_lemma}\ref{item:1.2.3} it holds
%\begin{align*}
%    d_G(v,V_i')
%    &=d_G\l(v,V_i\setminus V(M_i)\r)-d_G\l(v,V_{i+2}\r)\\
%    &=\l(1\pm 2\alpha\r)\frac{|V_i\setminus{V(M_i)}|}{n}d-%\left(1\pm\lambda/d\right)^3\alpha^4\frac{|V_i|}{n}d\\
%    &=\l(1\pm 3\alpha\r)\frac{\card{V_i'}}{n}d
%\end{align*} and 
\[d_G(v,U_i)=d_G\l(v,V_{i+1}\r)-d_G\l(v,V_{i+2}\r)=\l(1\pm3\alpha\r)\frac{\card{U_i}}{n}d.\]
Applying  Lemma \ref{lem:cover_down_lemma} to $G_i$ with \(U_i\), and setting \(q=d/n,~\beta=6(2+4\gamma)p_i\gamma d,~c=1-3\alpha\), 
%\(q=d/n,~\beta=6(2+4\lambda/d)p_i\lambda,~c=1-3\alpha,~ C=1+3\alpha,~\gamma=\lambda/d\),
we obtain an \(O\left(n^3/\card{V_i}^2d^3\right)\)-spread matching \(M_i'\) covering all vertices in \(V_i'\setminus V_{i+1}\) and at most \(\alpha^2 |V_{i+1}|\) vertices in \(V_{i+1}\), and no vertex in \(V_{i+2}\). %Meanwhile, for every \(v\in V_{i+1}\), \[d_G\l(v,V_{i+1}\setminus V(M_i')\r)\ge d_G\l(v,U_i\setminus V(M_i')\r)\ge\l(1-\alpha \r)\frac{|U_i\setminus{V(M_i')}|}{n}d\ge \l(1-2\alpha \r)\frac{|V_{i+1}\setminus{V(M_i')}|}{n}d \] 
%and \(d_G\l(v,V_{i+1}\setminus V(M_i')\r)\le d_G\l(v,V_{i+1}\r)\le\l(1+2\alpha\r)\frac{|V_{i+1}\setminus{V(M_i')}|}{n}d\).
By taking \(M_{i+1}=M_i\cup M_i' \) we complete the inductive step.

Finally, to obtain a perfect matching, note that if \(M_N\) satisfies the properties above, then $\delta(G[V_N\setminus V(M_N)])\ge\left(1-2\alpha\right)\frac{|V_N\setminus{V(M_N)}|}{n}d$. Moreover, \(G[V_N\setminus V(M_N)]\) is \(\l(d/n, 6(2+4\gamma)p_N\gamma d\r)\)-bijumbled and \(\frac{\alpha^2n^{4/3}}{2d}\le |V_N\setminus V(M_N)|\le \frac{n^{4/3}}{d}\). 
As $\gamma d\le\varepsilon d^2/n$, by applying Theorem \ref{thm:bijumbled_graph} with \(c=1-2\alpha,~q=d/n,~\beta=6(2+4\gamma)p_N\gamma d\), and \(\eta=\frac{15\eps}{\alpha^4}\), 
we obtain a triangle factor \(\widetilde{F}\subseteq G[V_N\setminus V(M_N)]\), which corresponds to a matching \(\widetilde{M}\) in \(H_3(G)\). Take \(M=M_N\cup \widetilde{M}\).

It remains to prove that $ M $ is \(O\left(n/d^3\right)\)-spread. 
Let $ S\subseteq E(H_3(G)) $ be a matching. 
We need to show that \(P_S=\mathbb{P}[S\subseteq M]=(O(n/d^3))^{|S|}\). 
Let $ T_1,\dots, T_m$ be an enumeration of the hyperedges in $S$. 
For each vector $\overrightarrow{x}\in [N+1]^m$, let $ P(\overrightarrow{x}) $ be the probability that for every $ j\in[m] $, the hyperedge $ T_j $ is in $M_{x_j}\setminus M_{x_{j}-1}$ if $ x_j\le N $, and $ T_j\in \widetilde{M} $ if $ x_j=N+1 $. 
We will show that 
\begin{equation}\label{prob}
P\left(\overrightarrow{x}\right)= \left(\prod_{i=1}^N{\left(O_{\alpha}\left(\frac{\card{V_{i-1}}}{d^3}\right)\right)}^{|\{j:x_j=i\}|}\right){\left(\frac{|V_N|}{n}\right)}^{3|\{j:x_j=N+1\}|} .
\end{equation}
This will suffice, since then 
\begin{equation}
\begin{aligned}
P_S=\sum_{\overrightarrow{x}\in[N+1]^m}P\left(\overrightarrow{x}\right)	&=\sum_{\overrightarrow{x}\in[N+1]^m}\left(\prod_{i=1}^N{\left(O_{\alpha}\left(\frac{\card{V_{i-1}}}{d^3}\right)\right)}^{|\{j:x_j=i\}|}\right){\left(\frac{|V_N|}{n}\right)}^{3|\{j:x_j=N+1\}|}\\
&={\left(O_{\alpha}\left(\frac{n}{d^3}\right)\right)}^m\sum_{\overrightarrow{x}\in[N+1]^m}\left(\prod_{i=1}^N{\left(\alpha^{2i}\right)}^{|\{j:x_j=i\}|}\right)\left(\frac{d^3}{n^4}\cdot\frac{n^{4}}{d^3}\right)^{|\{j:x_j=N+1\}|}\\
&={\left(O_{\alpha}\left(\frac{n}{d^3}\right)\right)}^m\prod_{j=1}^m\left(\sum_{i=1}^{N}\alpha^{2i}+1\right)={\left(O_{\alpha}\left(\frac{n}{d^3}\right)\right)}^m.
\end{aligned}
\end{equation}
We now prove (\ref{prob}). For \(1\le i\le N\), let \(C_i\) be the event that \(\{T_j:x_j=i\}\subseteq G[V_{i-1}]\). 
Let $ D_i $ be the event that $ \{T_j:x_j=i\}\subseteq M_i\setminus M_{i-1}$ if $i\le N $, $ \{T_j:x_j=i\}\subseteq \widetilde{M}$ if $ i=N+1 $.  
We then have that 
\begin{equation}
P\left(\overrightarrow{x}\right)\le\mathbb{P}[\bigcap_{i=1}^{N+1}C_i]\prod_{i=1}^{N+1}\mathbb{P}[D_i|\bigcap_{i=1}^{N+1}C_i,D_1\cap\dots \cap D_{i-1}].
\end{equation}
By the randomness guaranteed in the vortex construction (\ref{item:1.2.5} in Lemma \ref{lem:vortex_lemma}), we have 
\begin{equation}
\mathbb{P}\left[\bigcap_{i=1}^{N+1}C_i\right]=\left(\prod_{i=1}^{N+1}{\left(O\left(\frac{|V_{i-1}|}{n}\right)\right)}^{3|\{j:x_j=i\}|}\right).
\end{equation}
Note that conditioned on any outcome of $ M_{i-1} $, the matching $ M_i\setminus M_{i-1} $ is $ O\l(n^3/\card{V_{i-1}}^2d^3\r) $-spread. 
Thus, for every $ i\le N $:
\[\mathbb{P}\left[D_i|\bigcap_{i=1}^{N+1}C_i,D_1^0\cap\dots\cap D_{i-1}^0\right]=\left(O_{\alpha}\left(n^3/\card{V_{i-1}}^2d^3\right)\right)^{|\{j:x_j=i\}|}.\]
Finally, we use the trivial bound $ \mathbb{P}\left[D_{N+1}|\bigcap_{i=1}^{N+1}C_i,D_1\cap\dots\cap D_{N+1}\right]\le 1 $ to obtain:
\[P\left(\overrightarrow{x}\right)\le\left(\prod_{i=1}^{N+1}{\left(O\left(\frac{|V_{i-1}|}{n}\right)\right)}^{3|\{j:x_j=i\}|}\right)\left(\prod_{i=1}^N\left(O_{\alpha}\left(\frac{n^3}{\card{V_{i-1}}^2d^3}\right)\right)^{|\{j:x_j=i\}|}\right)\]
which implies (\ref{prob}).

On the other hand, to prove~\Cref{thm:main_thm_triangle_factor}~\ref{traingle factor2}, it suffices to prove that if $p=\min\{(1-\eps)n^{1/3}\log^{1/3} n/{d}, 1\}$, then with high probability there exists a vertex which is not covered by a triangle in $G\cap G(n,p)$. 
For $v\in V(G)$, let $I_v$ be the indicator that $v$ is not covered by a triangle in $G\cap G(n,p)$. 
Recall that $G$ is an $(n,d,\lambda)$-graph. By Lemma~\ref{expandermixing}, $e_G(N(v))=\frac{d^3}{2n}\pm \lambda d$. 
Since $\lambda\le \eps d^2/n$, we get that $e_G(N(v))=(1\pm \eps)d^3/(2n)$. 
Hence, each vertex $v$ is in $(1\pm \eps)d^3/(2n)$ triangles in $G$.

Let $I=\sum_{v\in V(G)}I_v$ be the number of vertices that are not covered by a triangle in $G\cap G(n,p)$. 
For each vertex $v\in V(G)$, let $\{v,y,z\}$ be a triangle in $G$, and let $X_{v,y,z}$ be the event that the triangle $\{v,y,z\}$ is in $G\cap G(n,p)$.
Notice that $I_v=\bigwedge_{y,z}\bar{X}_{v,y,z}$, then by Harris' inequality, we have 
\[
\mathbb{P}[I_v]\ge\prod_{y,z}\mathbb{P}\left[\bar{X}_{v,y,z}\right]\ge{\left(1-p^3\right)}^{\frac{(1+\eps)d^3}{2n}}=\omega(1/n).
\]
Hence, \[\mathbb{E}(I)= \sum_{v\in V(G)}\mathbb{P}[I_v]\ge\omega(1).\]
By the first moment method, $I>0$ almost surely. 
This implies that there is no triangle factors in $G$ with high probability.
\end{proof}

\section{Concluding remark}~\label{concluding}
In this paper, we consider the random subgraphs of pseudorandom graphs. Note that our method for the triangle factor in pseudorandom graphs can be generalized to all clique factors.
We remark that Theorems~\ref{thm:main_thm_ndlambda} and~\ref{thm:main_thm_triangle_factor} enable us to count the number of Hamiltonian cycles, perfect matchings, and triangle factors in the pseudorandom graphs.
\begin{coro}
    Let $G$ be an $(n,d,\lambda)$-graph with $\lambda=o(d)$ and sufficient large $n$. Then $G$ contains at least $(\tfrac{d}{(1+o(1))\log n})^n$ Hamiltonian cycles and $(\frac{d}{(1+o(1))\log n})^{\frac{n}{2}}$ perfect matchings. 
\end{coro}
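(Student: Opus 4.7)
The plan is a standard first-moment transfer from the random sparsification results back to the host graph. Fix an arbitrary constant $\gamma>0$ and set $p=(1+\gamma)\log n/d$. Let $h(G)$ and $m(G)$ denote, respectively, the number of Hamiltonian cycles and the number of perfect matchings in $G$, and let $X_H$ (resp.\ $X_M$) be the number of Hamiltonian cycles (resp.\ perfect matchings) present in $G_p$. Since every Hamiltonian cycle has exactly $n$ edges and every perfect matching has exactly $n/2$ edges, and since $G_p$ is obtained by retaining each edge of $G$ independently with probability $p$,
\[
\mathbb{E}[X_H]=h(G)\cdot p^n,\qquad \mathbb{E}[X_M]=m(G)\cdot p^{n/2}.
\]

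By Theorem~\ref{thm:main_thm_ndlambda}(i), for this choice of $p$ the random subgraph $G_p$ contains a Hamiltonian cycle with high probability, and in particular (when $n$ is even) it contains a perfect matching with high probability. Since $X_H$ and $X_M$ are non-negative integer random variables, we have the trivial domination $X_H\ge \mathbbm{1}[X_H\ge 1]$ and $X_M\ge \mathbbm{1}[X_M\ge 1]$, hence
\[
\mathbb{E}[X_H]\ge \mathbb{P}[X_H\ge 1]\ge 1-o(1),\qquad \mathbb{E}[X_M]\ge \mathbb{P}[X_M\ge 1]\ge 1-o(1).
\]
Rearranging gives
\[
h(G)\ge \frac{1-o(1)}{p^n}=(1-o(1))\left(\frac{d}{(1+\gamma)\log n}\right)^n,
\]
and analogously $m(G)\ge (1-o(1))\bigl(d/((1+\gamma)\log n)\bigr)^{n/2}$.

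Since $\gamma>0$ was an arbitrary constant, taking $\gamma=\gamma(n)$ tending to zero sufficiently slowly (so that the high-probability conclusion of Theorem~\ref{thm:main_thm_ndlambda} still applies along this sequence) absorbs the $(1-o(1))$ prefactor into the denominator and yields the claimed bounds $h(G)\ge (d/((1+o(1))\log n))^n$ and $m(G)\ge (d/((1+o(1))\log n))^{n/2}$. There is no substantial obstacle here; the argument is a direct application of the main theorem combined with the first-moment bound $\mathbb{E}[X]\ge \mathbb{P}[X\ge 1]$ for a non-negative integer random variable, and the only minor point to verify is that Theorem~\ref{thm:main_thm_ndlambda} remains valid when $\gamma$ is allowed to decay to zero slowly enough, which follows by inspecting the quantitative dependence in its proof.
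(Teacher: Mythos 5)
Your proposal is correct and is precisely the intended argument: the paper states this corollary without proof as an immediate consequence of Theorem~\ref{thm:main_thm_ndlambda}, and the first-moment transfer $1-o(1)\le\mathbb{P}[X_H\ge 1]\le\mathbb{E}[X_H]=h(G)p^n$ together with a diagonalization over fixed $\gamma>0$ is exactly the standard route. No gaps.
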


\begin{coro}
    Let $0<1/n\ll\varepsilon\ll1$. There exists a constant $c>0$ such that if $G$ is an $(n,d,\lambda)$-graph with $d=\Omega(n^{5/6}\log^{{1}/{2}}n)$, $3|n$ and $\lambda\le \frac{\varepsilon d^2}{n}$, then $G$ contains at least $(\frac{d}{cn^{{1}/{3}}\log^{{1}/{3}} n})^n$ triangle factors.
\end{coro}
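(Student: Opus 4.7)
The plan is a standard first-moment counting argument using Theorem~\ref{thm:main_thm_triangle_factor}\ref{traingle factor1}. Although that theorem is stated with $p\gg n^{1/3}\log^{1/3}n/d$, its proof in fact yields the conclusion for $p=C_0 n^{1/3}\log^{1/3}n/d$ with any sufficiently large absolute constant $C_0$: the argument passes through the coupling in Lemma~\ref{sparse couplelemma} and the FKNP Theorem~\ref{FKNP} applied to an $O(n/d^3)$-spread distribution on triangle factors, and the governing constraint $\pi\gtrsim (n/d^3)\log n$ with $\pi=ap^3$ translates into merely a constant lower bound on $p/(n^{1/3}\log^{1/3}n/d)$.

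Fix such a $C_0$ and set $p=C_0 n^{1/3}\log^{1/3}n/d$. Each triangle factor consists of $n/3$ vertex-disjoint triangles and therefore spans exactly $n$ edges of $G$, so for any fixed triangle factor $F$ of $G$ we have $\mathbb{P}[F\subseteq G_p]=p^n$. Writing $N$ for the number of triangle factors in $G$ and $X$ for the number of triangle factors contained in $G_p$, linearity of expectation gives $\E[X]=N\cdot p^n$, while Theorem~\ref{thm:main_thm_triangle_factor}\ref{traingle factor1} gives $\E[X]\ge\mathbb{P}[X\ge 1]=1-o(1)$. Rearranging yields
\[
N\ge (1-o(1))\cdot p^{-n}=(1-o(1))\left(\frac{d}{C_0\, n^{1/3}\log^{1/3}n}\right)^n\ge \left(\frac{d}{c\, n^{1/3}\log^{1/3}n}\right)^n
\]
for any fixed $c>C_0$ and all sufficiently large $n$.

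The main (and only) obstacle is pinning down that the constant in the threshold $p$ may be taken as an absolute constant $C_0$ rather than a diverging function, so that $p^{-n}$ has the claimed form. This can be done either by a careful re-reading of the proof of Theorem~\ref{thm:main_thm_triangle_factor}\ref{traingle factor1} as indicated above, or, most cleanly, by invoking the $(C_1 n/d^3)$-spread measure $\mu$ on triangle factors built in that proof directly: the inequality $\mu(\{M\})\le (C_1 n/d^3)^{n/3}$ for every triangle factor $M$, combined with $\sum_M \mu(\{M\})=1$, gives $N\ge (d^3/(C_1 n))^{n/3}=(d/(C_1^{1/3}\, n^{1/3}))^n$, which is in fact stronger than the stated bound.
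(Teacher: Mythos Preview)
Your proposal is correct and matches the paper's intended argument: the paper does not write out a proof of this corollary but presents it as a direct consequence of Theorem~\ref{thm:main_thm_triangle_factor}, and the standard first-moment calculation you give (with the observation that the proof of Theorem~\ref{thm:main_thm_triangle_factor}\ref{traingle factor1} in fact works at $p=C_0 n^{1/3}\log^{1/3}n/d$ for a fixed constant $C_0$, via FKNP applied to the $O(n/d^3)$-spread measure together with Lemma~\ref{sparse couplelemma}) is exactly what is implicit there.

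Your second route---reading off $N\ge (d^3/(C_1 n))^{n/3}$ directly from the $(C_1 n/d^3)$-spread measure on triangle factors---is a genuinely cleaner alternative that bypasses both the coupling lemma and FKNP and removes the $\log^{1/3}n$ loss, yielding a strictly stronger count. The paper's phrasing of the corollary (with the $\log^{1/3}n$ present) indicates it has the first-moment route in mind; your spread-measure observation is a nice improvement worth noting.
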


%Since we use the random sparsification lemma for the random subgraphs of $(n,d,\lambda)$ in the proof for triangle factors, we could not improve the restriction on $d$. 
It would be interesting to figure out what the weakest possible requirement on the degree $d$ and the spectral gap is that will guarantee the existence of triangle factors in the random subgraphs. 

\begin{question}
What is the smallest $d$ such that for any $(n,d,\lambda)$-graph $G$ with $\lambda=o(\tfrac{d^2}{n})$ and $p\gg \tfrac{n^{\frac{1}{3}}\log^{\frac{1}{3}} n}{d}$, $G_p$ contains a triangle factor with high probability?
\end{question}

%\section{Semi-sharp threshold for Hamiltonian cycle}

\bibliographystyle{plain}
\bibliography{cite}

\end{document}